\newtheorem{theorem}{Theorem}[section]
\newtheorem{lemma}[theorem]{Lemma}
\newtheorem{proposition}[theorem]{Proposition}
\newtheorem{corollary}[theorem]{Corollary}
\newcommand{\prob}{\mathbb{P}}
\newcommand{\expc}{\mathbb{E}}
\newcommand{\bfy}{\mathbf{Y}}
\newcommand{\bfx}{\mathbf{X}}
\newcommand{\bfe}{\boldsymbol \varepsilon}
\newcommand{\R}{\mathbb{R}}
\DeclareMathOperator*{\argmin}{\arg\!\min}
\title{High-Dimensional Change-Point Estimation: Combining Filtering with Convex Optimization}
\author{Yong Sheng Soh$^{\dag}$ and Venkat Chandrasekaran$^{\dag,\ddag}$ \thanks{Email: ysoh@caltech.edu, venkatc@caltech.edu} \vspace{0.25in} \\ $^\dag$ Department of Computing and Mathematical Sciences \\ $^\ddag$ Department of Electrical Engineering \\ California Institute of Technology \\ Pasadena, CA 91125}
\begin{document}

\maketitle

%\begin{frontmatter}
%
%\title{High-Dimensional Change-Point Estimation: Combining Filtering with Convex Optimization}

%%% Group authors per affiliation:
%\author[cms]{Yong Sheng Soh}
%\author[cms,ee]{Venkat Chandrasekaran}

%% or include affiliations in footnotes:
%\author[mymainaddress]{Computing and Mathematical Sciences}
%\ead[url]{www.elsevier.com}
%
%\author[mysecondaryaddress]{Computing and Mathematical Sciences\corref{mycorrespondingauthor}}
%\cortext[mycorrespondingauthor]{Corresponding author}
%\ead{support@elsevier.com}
%

%\address[cms]{Department of Computing and Mathematical Sciences, California Institute of Technology, Pasadena, CA 91125}
%\address[ee]{Department of Electrical Engineering, California Institute of Technology, Pasadena, CA 91125}

\begin{abstract}
We consider change-point estimation in a sequence of high-dimensional signals given noisy observations.  Classical approaches to this problem such as the filtered derivative method are useful for sequences of scalar-valued signals, but they have undesirable scaling behavior in the high-dimensional setting.  However, many high-dimensional signals encountered in practice frequently possess latent low-dimensional structure.  Motivated by this observation, we propose a technique for high-dimensional change-point estimation that combines the filtered derivative approach from previous work with convex optimization methods based on atomic norm regularization, which are useful for exploiting structure in high-dimensional data.  Our algorithm is applicable in online settings as it operates on small portions of the sequence of observations at a time, and it is well-suited to the high-dimensional setting both in terms of computational scalability and of statistical efficiency.  The main result of this paper shows that our method performs change-point estimation reliably as long as the product of the smallest-sized change (the Euclidean-norm-squared of the difference between signals at a change-point) and the smallest distance between change-points (number of time instances) is larger than a Gaussian width parameter that characterizes the low-dimensional complexity of the underlying signal sequence.
\end{abstract}

%\begin{keyword}
%High-dimensional time series; convex geometry; atomic norm thresholding; filtered derivative; performance tradeoffs in change-point estimation.
%\end{keyword}

\textbf{Keywords}: High-dimensional time series; convex geometry; atomic norm thresholding; filtered derivative.

%\end{frontmatter}

% \linenumbers

%%% Introduction %%%%%%%%%%%%%%%%%%%%%%%%%%%%%%%%%%%%%%%%%%%%%

\section{Introduction}

Change-point estimation is the identification of abrupt changes or anomalies in a sequence of observations.  Such problems arise in numerous applications such as product quality control, data segmentation, network analysis, and financial modeling; an overview of the change-point estimation literature can be found in \cite{BasNik:93,ChoRobSie:71,PooHad:08,VeeBan:12}. As in other inferential tasks encountered in contemporary settings, a key challenge underlying many modern change-point estimation problems is the increasingly large dimensionality of the underlying sequence of signals -- that is, the signal at each location in the sequence is not scalar-valued but rather lies in a high-dimensional space.  This challenge leads both to computational difficulties as well as to complications with obtaining statistical consistency in settings in which one has access to a small number of observations (relative to the dimensionality of the space in which these observations live). 

%These difficulties have been addressed in a range of statistical inference tasks involving high-dimensional data such as denoising \cite{BhaTanRec:13,Don:95,GavDon:14}, model selection \cite{CanPla:09,MeiBuh:06,Wai:09}, and the estimation of large covariance matrices \cite{BicLev:08b,BicLev:08a}, with convex optimization methods playing a prominent role.  In this paper we address the challenge posed by high-dimensional signals in the context of change-point estimation problems by blending classical filtering-based approaches with convex programming techniques.

A prominent family of methods for estimating the locations of change-points in a sequence of noisy scalar-valued observations is based on the \emph{filtered derivative} approach \cite{AntHus:94,BasNik:93,BenBas:84,Ber:00,BerFhiGui:11}. Broadly speaking, these procedures begin with an application of a low-pass filter to the sequence, followed by a computation of pairwise differences between successive elements, and finally the implementation of a thresholding step to estimate change-points. A large body of prior literature has analyzed the performance of this family of algorithms and its variants \cite{AntHus:94,Ber:00,BerFhiGui:11}. Unfortunately, as we describe in Section \ref{sec:mainresults}, the natural extension of this procedure to the high-dimensional setting leads to performance guarantees for reliable change-point estimation that require the underlying signal to remain unchanged for long portions of the sequence.  Such requirements tend to be unrealistic in applications such as financial modeling and network analysis in which rapid transitions in the underlying phenomena trigger frequent changes in the associated signal sequences.

\subsection{Our contributions}

% The following is a stable version

To alleviate these difficulties, modern signal processing methods for high-dimensional data -- in a range of statistical inference tasks such as denoising \cite{BhaTanRec:13,Don:95,GavDon:14}, model selection \cite{CanPla:09,MeiBuh:06,Wai:09}, the estimation of large covariance matrices \cite{BicLev:08b,BicLev:08a}, and others \cite{CanRomTao:06,CRPW:12,Don:06,FoyMac:13,RecFazPar:10} -- recognize and exploit the observation that signals lying in high-dimensional spaces typically possess low-dimensional structure. For example, images frequently admit sparse representations in an appropriately transformed domain \cite{BruDonEla:09,CanRomTao:06} while covariance matrices are well-approximated as low-rank matrices in many settings. The exploitation of low-dimensional structure in solving problems such as denoising leads to consistency guarantees that depend on the intrinsic low-dimensional ``complexity'' of the data rather than on the ambient (large) dimension of the space in which they live.  A notable feature of several of these structure-exploiting procedures is that they are based on convex optimization methods, which can lead to tractable numerical algorithms for large-scale problems as well as to insightful statistical performance analyses.  Motivated by these ideas, we propose a new approach for change-point estimation in high dimensions by integrating a convex optimization step into the filtered derivative framework (see Section \ref{sec:mainresults}).  We prove that the resulting method provides reliable change-point estimation performance in high-dimensional settings, with guarantees that depend on the underlying low-dimensional structure in the sequence of observations rather than on their ambient dimension.

To illustrate our ideas and arguments concretely, we consider a setup in which we are given a sequence $\bfy[t] \in \R^p$ for $t = 1,\dots,n$ of observations of the form:
\begin{equation} \label{eq:gaussianmodel}
\bfy[t] = \bfx^{\star}[t] + \bfe[t].
\end{equation}
Here $\bfx^{\star}[t] \in \R^p$ is the underlying signal and the noise is normally distributed $\bfe[t] \sim \mathcal{N}(0,\sigma^2 I_{p \times p})$.  The signal sequence $\mathcal{X} := \{\bfx^{\star}[t]\}_{t=1}^n$ is assumed to be piecewise constant with respect to $t$.  The set of change-points is denoted by $\tau^{\star} \subset \{1,\dots,n\}$, i.e., $t \in \tau^{\star} \Leftrightarrow \bfx^{\star}[t] \neq \bfx^{\star}[t+1]$, and the objective is to estimate the set $\tau^{\star}$.  A central aspect of our setup is that each $\bfx^{\star}[t]$ is modeled as having an efficient representation as a linear combination of a small number of elements from a known set $\mathcal{A}$ of building blocks or atoms \cite{BhaTanRec:13,CanRec:09,CanRomTao:06,ChaJor:13,CRPW:12,Don:95,Don:06,Faz:02,RecFazPar:10,TBSR:13}. This notion of signal structure includes widely studied models in which signals are specified by sparse vectors and low-rank matrices. It also encompasses several others such as low-rank tensors, orthogonal matrices, and permutation matrices.  The convex optimization step in our approach exploits knowledge of the atomic set $\mathcal{A}$; specifically, the algorithm described in Section \ref{sec:algorithm} consists of a denoising operation in which the underlying signal is estimated from local averages of the sequence $\bfy[t]$ using a proximal operator based on the atomic norm associated to $\mathcal{A}$ \cite{BhaTanRec:13,CRPW:12,Don:95}.  The main technical result of this paper is that the method we propose in Section \ref{sec:mainresults} provides accurate estimates of the change-points $\tau^{\star}$ with high probability under the condition:
\begin{equation}
\label{eq:changepoint_sufficient_simple}
\Delta_{\min}^{2} T_{\min} \gtrsim \sigma^2 \{ \eta^2(\mathcal{X}) +\log n\},
\end{equation}
where $\Delta_{\min}$ denotes the size (in $\ell_2$-norm) of the smallest change among all change-points, $T_{\min}$ denotes the smallest interval between successive change-points, and $n$ is the number of observations.  The quantity $\eta(\mathcal{X})$ captures the low-dimensional complexity in the signal sequence $\mathcal{X}:= \{\bfx^{\star}[t]\}_{t=1}^n$ via a Gaussian distance/width characterization, and it appears in our result due to the incorporation of the convex optimization step.  In the high-dimensional setting, the parameter $\eta^2$ plays a crucial role as it reflects the underlying low-dimensional structure in the signal sequence of interest; as such it is usually much smaller than the ambient dimension $p$ (we quantify the comparisons in Section \ref{sec:background}).  Indeed, directly applying the filtered derivative method without incorporating a convex optimization step that exploits the signal structure would lead to weaker performance guarantees in terms of $p$ rather than $\eta^2$.

%This denoising procedure, which is in general a non-smooth operation, is the source of the key difficulty in our analysis as we compare our Our analysis of a filtering operation interlaced with convex optimization involves delicate care as the denoising operation is non-smooth.

The performance guarantee \eqref{eq:changepoint_sufficient_simple} highlights a number of tradeoffs in high-dimensional change-point estimation that result from using our approach.  For example, the appearance of the term $\Delta_{\min}^{2} T_{\min}$ on the left hand side of \eqref{eq:changepoint_sufficient_simple} implies that it is possible to compensate for one of these quantities being small if the other one is suitably large.  Further, our algorithm also operates in a causal manner on small portions of the sequence at any given time rather than on the entire sequence simultaneously, and it is therefore useful in ``online'' settings.  This feature of our method combined with the the result \eqref{eq:changepoint_sufficient_simple} leads to a more subtle tradeoff between the computational efficiency of the approach and the number of observations $n$; specifically, our algorithm can be adapted to process larger datasets (e.g., settings in which observations are obtained via high-frequency sampling, leading to larger $n$) more efficiently without loss in statistical performance by employing a suitable form of convex relaxation based on the ideas discussed in \cite{ChaJor:13}.  We discuss these points in greater detail in Section \ref{sec:tradeoffs}.

% Further, in some online settings one may be faced with an input sequence obtained via high-frequency sampling and the requirement that these observations be processed in a ``real-time'' fashion.   Our algorithm is naturally amenable to By appealing to convex relaxation approaches in the context of statistical denoising as discussed in [CITE], we show that  building on recent work relating recovery guarantees associated with denoising operations based on convex optimization with convex geometry of the underlying signal structure, we demonstrate another tradeoff between sampling frequency and overall computational runtime. We explore these tradeoffs in further detail in Section \ref{sec:tradeoffs}.

\subsection{Related work}

A recent paper by Harchaoui and L\'evy-Leduc \cite{HarLev:10} is closest in spirit to ours (in terms of providing change-point estimation guarantees of the form \eqref{eq:changepoint_sufficient_simple}); they describe a convex programming method based on total-variation minimization to detect changes in sequences of scalar-valued signals.  In addition to the restriction to scalar-valued signals, the technique in \cite{HarLev:10} requires knowledge of the full sequence of observations in advance.  As a result it is not directly applicable in high-dimensional and online settings unlike our proposed approach.

High-dimensional change-point estimation has received much attention in recent years based on different types of extensions of the scalar case.  The diversity of these generalizations of the scalar setting is due to the wide range of applications in which change-point estimation problems arise, each with a unique set of considerations. For example, several papers \cite{ChoFry:14,EniHar:13} investigate high-dimensional change-point estimation in settings in which the changes only occur in a small subset of components.  Therefore, assumptions about low-dimensional structure are made with regards to the pattern of changes rather than in the signal itself at each time instance (as in our setup).  Xie et al. \cite{XieHuaWil:12} consider a high-dimensional change-point estimation problem in which the underlying signals are modeled as lying on a low-dimensional manifold; although this setup is similar to ours, their algorithmic approach is based on projections onto manifolds rather than on convex optimization, and the types of guarantees obtained in \cite{XieHuaWil:12} are qualitatively quite different in comparison to \eqref{eq:changepoint_sufficient_simple}. We also note recent work on high-dimensional change-point problems in which the authors study the impact of random projections on the performance of classical algorithms such as the cumulative-sum method \cite{AstKir:14}.

\subsection{Paper outline}

%We provide the relevant background in the next section. The algorithm and the main recovery guarantee are given in Section \ref{sec:mainresults}. We discuss tradeoffs in Section \ref{sec:tradeoffs} and

Section~\ref{sec:background} gives the relevant background on structured signals that are concisely represented with respect to sets of elementary atoms as well as the analytical tools that are used in the remainder of the paper.  In Section \ref{sec:mainresults} we describe our algorithm for high-dimensional change-point estimation, and we state the main recovery guarantee of the procedure.  In Section \ref{sec:tradeoffs} we discuss the tradeoffs that result from using our approach, and their utility in adapting our algorithm to address challenges beyond high-dimensionality that arise in applications involving change-point estimation.  We verify our theoretical results with numerical experiments on synthetic data in Section \ref{sec:numericalresults}, and we conclude with brief remarks and further directions in Section \ref{sec:conclusions}.  The proofs are given in the Appendix.

%%% Background and Problem Formulation %%%%%%%%%%%%%%%%%%%%%%%%%%%%%%%%%%%%%

\section{Background on Structured Signal Models}\label{sec:background}
%The high-dimensional signals that are encountered in practice frequently possess an intrinsic low-complexity structure. For example, audio signals are often well-approximated sparse vectors in the frequency domain while correlations between financial assets are often well-approximated by a low-rank matrix. In fact, in many of the recent advances in signal recovery and ill-posed linear inverse problems, low-complexity structure plays a critical role in ensuring accurate recovery \cite{CanRomTao:06,Don:06}. Motivated by these examples, we consider an abstract model for structure in signals that generalizes many types of structure that are considered in the literature.

\subsection{Efficient representations with respect to atomic sets}

We outline a framework with roots in nonlinear approximation \cite{Bar:93,DevTem:96,Jon:92,Pis:81} that generalizes several types of low-dimensional models considered in the literature such as sparse vectors and low-rank matrices \cite{BhaTanRec:13,BicLev:08b,BicLev:08a,CanRomTao:06,CRPW:12,Don:06,MeiBuh:06,OymHas:12,RecFazPar:10}.

Let $\mathcal{A} \subseteq \mathbb{R}^p$ be a compact set that specifies a collection of atoms. We say that a signal $\bfx \in \mathbb{R}^p$ has a concise representation with respect to $\mathcal{A}$ if it admits a decomposition as a sum of a small number of atoms in $\mathcal{A}$, that is, we are able to write
\begin{equation} \label{eq:atomicnormstructure}
\bfx = \sum^{s}_{i=1} c_i \textbf{a}_i, \textbf{a}_i \in \mathcal{A}, c_i \geq 0,
\end{equation}
for some $s \ll p$.  Sparse vectors and low-rank matrices are examples of low-dimensional representations that are expressible in this framework.  Specifically, an atomic set for sparse vectors is the set of signed standard basis vectors $\mathcal{A} = \{\pm \textbf{e}_i | 1\leq i \leq p\}$, while a natural atomic set for low-rank matrices is set of all rank-one matrices with unit Euclidean norm.  Other examples include binary vectors (e.g., in knapsack problems \cite{ManRec:11}), permutation matrices (in ranking problems \cite{JagSha:11}), low-rank tensors \cite{KolBad:09}, and orthogonal matrices.  Such classes of signals that have concise representations with respect to general atomic sets were studied in the context of linear inverse problems \cite{CRPW:12}, and subsequently in the setting of statistical denoising \cite{BhaTanRec:13,ChaJor:13}.

In comparison with alternative notions of low-dimensional structure, e.g., manifold models \cite{XieHuaWil:12}, which have been considered previously in the context of high-dimensional change-point estimation (and more generally in signal processing), the setup described here has the virtue that one can employ efficient algorithms for convex optimization methods and one can appeal to insights from convex geometry in developing and analyzing algorithms for high-dimensional change-point estimation.  We discuss the relevant concepts in the next two subsections.

%as described in the next subsection in developing an algorithmfor c leads naturally to that it leads naturally to an algorithm that is based on convex optimization. In Section \ref{sec:mainresults}, we prove a recovery guarantee based on using our algorithm that is also indicative of tradeoffs in high-dimensional change-point estimation.

\subsection{Minkowski functional and proximal operators} \label{sec:minkowskifunc}

A key feature of our change-point estimation algorithm is the incorporation of a signal denoising step that exploits knowledge of the atomic set $\mathcal{A}$. To formally define the denoising operation, we consider the Minkowski functional $\|\cdot \|_{\mathcal{C}} : \mathbb{R}^p \mapsto [0,\infty]$
\begin{equation} \label{eq:minkowski}
\| \bfx \|_{\mathcal{C}} :=\inf \{ t: \bfx \in t \mathcal{C}, ~ t > 0 \},
\end{equation}
defined with respect to a convex set $\mathcal{C} \subset \R^p$ such that $\mathcal{A} \subseteq \mathcal{C}$, as discussed in \cite{CRPW:12}.  As $\mathcal{C}$ is convex, the Minkowski functional $\| \cdot \|_{\mathcal{C}}$ is also convex.  This function is also called the gauge function in the convex analysis literature \cite{Roc:70}.  For a given $\bfy \in \mathbb{R}^p$ and a convex set $\mathcal{C}$, we consider denoisers specified in terms of the following \emph{proximal operator}:
\begin{equation} \label{eq:proxoperator}
\hat{\bfx} =  \argmin_{\bfx \in \mathbb{R}^p} \frac{1}{2}  \| \bfy - \bfx \|_2^{2} + \lambda \| \bfx \|_{\mathcal{C}}.
\end{equation}
As $\| \cdot \|_{\mathcal{C}}$ is a convex function, this optimization problem is a convex program.  To obtain a proximal operator that enforces signal structure in the denoising operation, the set $\mathcal{C}$ is usually taken to be the tightest convex set containing the atomic set $\mathcal{A}$, i.e., $\mathcal{C}= \mathrm{conv}(\mathcal{A})$.  With $\mathcal{C}= \mathrm{conv}(\mathcal{A})$, the resulting Minkowski functional is called the \emph{atomic norm}\footnote{For \eqref{eq:minkowski} to formally define a norm, we would also need the set $\mathcal{A}$ to be centrally symmetric. Nevertheless, the results in the remainder of the paper hold without this condition, so we use ``norm'' with an abuse of terminology.} with respect to $\mathcal{A}$, and the associated proximal operator \eqref{eq:proxoperator} is called \emph{atomic norm thresholding} \cite{BhaTanRec:13}.  The atomic norm has been studied in the approximation theory literature for characterizing approximation rates associated with best $k$-term approximants \cite{Bar:93,DevTem:96,Jon:92,Pis:81}, and its convex-geometric properties were investigated in \cite{CRPW:12} in the context of ill-posed linear inverse problems.  When $\mathcal{A} = \{\pm \textbf{e}_i | 1\leq i \leq p\}$ is the collection of signed standard basis vectors, the atomic norm with respect to $\mathcal{A}$ is simply the $\ell_1$-norm in $\mathbb{R}^p$.  Similarly, the atomic norm corresponding to unit-Euclidean-norm rank-one matrices is the matrix nuclear norm.  More generally, one can define atomic norms associated to other types of structured objects such as permutation matrices, low-rank tensors, orthogonal matrices, and signed vectors; see \cite{CRPW:12} for a detailed list.  Atomic norm thresholding naturally generalizes soft-thresholding based on the $\ell_1$-norm for sparse signals to a more general denoising operation for the types of structured signals described here.

One exception to the rule of thumb of choosing $\mathcal{C} = \mathrm{conv}(\mathcal{A})$ arises if the atomic norm is intractable to represent, e.g., the tensor nuclear norm \cite{HilLim:13}.  That is, although these norms are convex functions, computing them may in general be computationally intractable.  To overcome such difficulties, a natural approach described in \cite{ChaJor:13,CRPW:12} is to consider Minkowski functionals of convex sets $\mathcal{C}$ that contain $\mathcal{A}$ and that are efficient to represent, i.e., further tractable convex relaxations of $\mathrm{conv}(\mathcal{A})$.

%Building on the insights in [CITE], we discuss in Section[REF] the implications of using such convex relaxations in obtaining tradeoffs between computational runtime and the length $n$ of the sequence of observations (i.e., the size of the dataset).

Finally, to avoid dealing with technicalities in degenerate cases, we assume throughout the remainder of the paper that the set $\text{conv}(\mathcal{A}) \subset \mathbb{R}^p$ is a solid convex set containing the origin in its interior.  Consequently, we have that $\| \bfx\|_{\mathcal{C}} < \infty$ for all $\bfx \in \mathbb{R}^p$.

\subsection{Summary parameters in signal denoising}

Next we describe the relevant convex-geometric concepts for analyzing the performance of proximal denoising operators.  For $\bfx \in \mathbb{R}^p$, the \emph{Gaussian distance} $\eta_{\mathcal{C}}(\bfx)$ \cite{FoyMac:13,OymHas:12} with respect to a norm $\| \cdot\|_{\mathcal{C}}$ is defined as
\begin{equation}\label{eq:gaussiandistance}
\eta_{\mathcal{C}}(\bfx) := \inf_{\lambda \geq 0} \biggl\{ \underset{\textbf{g} \sim \mathcal{N}(0, I_{p\times p})}{\expc} [ \mathrm{dist}(\textbf{g},\lambda \cdot \partial \| \bfx \|_{\mathcal{C}}) ] \biggr\}.
\end{equation}
Here $\mathrm{dist}(\textbf{g},\partial\| \bfx \|_{\mathcal{C}}) := \inf_{\textbf{w} \in \partial\| \bfx \|_{\mathcal{C}}} \| \textbf{w} - \textbf{g} \|_2$ denotes the distance of $\textbf{g}$ from the set $\partial\| \bfx \|_{\mathcal{C}}$, where $\partial\| \bfx \|_{\mathcal{C}}$ is the subdifferential of the function $\|\cdot \|_{\mathcal{C}}$ at the point $\bfx$ \cite{Roc:70}.  We relate the Gaussian distance to the Gaussian width \cite{Gor:88} in Appendix \ref{sec:gaussiandistwidth} by extending a result in \cite{FoyMac:13}.

The Gaussian distance $\eta_{\mathcal{C}}(\bfx)$ is useful for characterizing the performance of the proximal denoising operator \eqref{eq:proxoperator} \cite{BhaTanRec:13,ChaJor:13,OymHas:12}.  Specifically, suppose $\hat{\bfx} =  \argmin_{\bfx \in \mathbb{R}^p} \frac{1}{2}  \| \bfx^\star + \bfe - \bfx \|_2^{2} + \lambda \| \bfx \|_{\mathcal{C}}$, then the error between $\hat{\bfx}$ and $\bfx^\star$ is bounded as \cite{OymHas:12}:
\begin{equation*}
 \| \bfx^{\star}-\hat{\bfx} \|_2 \leq  \mathrm{dist}(\bfe,\lambda \cdot \partial \|\bfx^{\star}\|_{\mathcal{C}} ).
\end{equation*}
Taking expectations with respect to $\bfe$ and subsequently optimizing the resulting bound with respect to $\lambda$ yields the Gaussian distance \eqref{eq:gaussiandistance}.  We prove a generalization of this result in Appendix \ref{sec:prep}, which is relevant to the analysis of the change-point estimation algorithm proposed in Section \ref{sec:algorithm}.

As we discuss in Section \ref{sec:mainresults}, the combination of the proximal denoising operator with a suitable filtering step leads to a change-point estimation procedure with performance guarantees in terms of $\eta_{\mathcal{C}}(\bfx)$ rather than $\sqrt{p}$.  This point is significant because for many examples of structured signals that are encountered in practice, it is typically the case that $\eta_{\mathcal{C}}(\bfx) \ll \sqrt{p}$. For example, if $\bfx$ is an $s$-sparse vector in $\mathbb{R}^p$ then proximal denoising via the $\ell_1$-norm gives $\eta_{\ell_1} (\bfx) \leq \sqrt{2s \log (p/s) + 3s/2}+7$ \cite{CRPW:12,FoyMac:13,RudVer:06,Sto:09}.  Similarly, if $\bfx$ is a rank-$r$ matrix in $\mathbb{R}^{d \times d}$ then proximal denoising via the matrix nuclear norm gives $\eta_{\mathrm{nuc}} (\bfx)\leq \sqrt{6rd}+7$ \cite{CRPW:12,FoyMac:13,OymHas:11,RecXuHas:11}.

In order to state performance guarantees for a \emph{sequence} of observations, we extend the definition of $\eta_{\mathcal{C}}$ to collections of vectors $\mathcal{X}=\{ \bfx^{\star}[1],\ldots,\bfx^{\star}[n] \}, \bfx^{\star}(i) \in \mathbb{R}^p$ as follows:
\begin{equation} \label{eq:etadefn}
\eta_{\mathcal{C}} (\mathcal{X}) := \inf_{\lambda \geq 0} \underset{\bfx^{\star}[t] \in \mathcal{X}}{\max} \biggl\{ \underset{\textbf{g} \sim \mathcal{N}(0, I_{p\times p})}{\expc} [ \mathrm{dist}(\textbf{g},\lambda \cdot \partial \| \bfx^{\star} [t] \|_{\mathcal{C}}) ] \biggr\}.
\end{equation}

%*** MOVE THIS TO APPENDIX, STATE SOMETHING ELSE HERE ***
%
%Finally, we state a property of $\eta_{\mathcal{C}}$ that is useful for the remainder of the paper:
%\begin{proposition} \label{thm:etascaled}
%\begin{align*}
%&\sigma \eta_{\mathcal{C}} (\mathcal{X})   =\inf_{\lambda \geq 0} \max_{ \bfx^{\star}[t] \in \mathcal{X}} \biggl\{ \underset{ \bfe \sim \mathcal{N}(0,\sigma^2 I_{p \times p})}{\expc} [ \mathrm{dist}(\bfe, \lambda \cdot \partial \| \bfx^{\star}[t] \|_{\mathcal{C}})] \biggr\}.
%\end{align*}
%\end{proposition}
%The proof is given in the appendix.

%%% Main Results %%%%%%%%%%%%%%%%%%%%%%%%%%%%%%%%%%%%%%%%%%%%

\section{Convex Programming for Change-Point Estimation} \label{sec:mainresults}

In this section, we describe our algorithm for high-dimensional change-point estimation by combining the filtered derivative method with proximal denoising.  We state the main theorem that characterizes the accuracy of the estimated set of change-points, and we outline the proof, with the full details given in the Appendix.

%Finally, we compare our algorithm with previous works. Finally, we discuss how the algorithm can be used to reconstruct the underlying signal. %Finally, we discuss tradeoffs between key quantities that are essential to ensuring the accuracy of the change-point estimation algorithm.

\subsection{Motivation}
We begin by highlighting some of the difficulties that arise in change-point estimation as a result of the high-dimensionality of the observations.  In order to frame our discussion concretely, we consider the prominent and widely-employed class of change-point estimation techniques based on the filtered derivative algorithm \cite{AntHus:94,BenBas:84,Ber:00,BerFhiGui:11}, although similar difficulties arise with other approaches as well.  The \emph{filtered derivative} method detects changes based on an application of a pairwise difference operator to the output of a suitable low-pass filter applied to the sequence of observations.  For simplicity, we describe a particular low-pass filter that is given by the sample mean of the observations over a small window (again, elaborations on this scheme are possible, with qualitatively similar conclusions).  Formally, consider the following sequence defined at time $t$ by computing differences of sample means over windows of size $\theta$:
\begin{equation} \label{eq:filteredderivative}
\mathrm{FD}_{\theta}[t] = -\frac{1}{\theta} \sum^{t}_{i=t-\theta+1} Y[i] + \frac{1}{\theta} \sum^{t+\theta}_{i=t+1} Y[i].
\end{equation}
Locations at which $\mathrm{FD}_{\theta}[t]$ has large magnitude (i.e., above a suitably chosen threshold) are declared as change-points.

This approach is well-suited for settings with sequences of scalar-valued signals, i.e., each $\bfx^{\star}[t]$ is scalar; see \cite{AntHus:94,Ber:00,BerFhiGui:11} for detailed analyses.  However, if applied directly to the high-dimensional setting, the underlying sequence of signals $\bfx^{\star}[t] \in \R^p$ is required to remain stationary over time scales on the order of $p$ so that changes can be reliably estimated.  This requirement is unfortunately not realistic for practical purposes, e.g., in image processing applications one typically encounters $p \approx 10^6$.  As such, it is desirable to develop an algorithm that detects changes in sequences of high-dimensional observations reliably even if the signal does not remain stationary over long time scales.

%the scalar case, but when directly applied to the high-dimensional setting are finally declared as change-points of A change-point estimation algorithm based on the filtered derivative begins with the computation of the filtered derivative values across all time instances. One then quantifies the size of a change at each time instance by applying an appropriate norm on the corresponding filtered derivative values, such as with the $\ell_2$-norm. The method for selecting the set of change-points from the set of filtered derivative values is often varied across different approaches, e.g. the work in \cite{BerFhiGui:11} suggests the use of the $p-$value statistic. For simplicity, we shall consider the heuristic whereby the estimated set of change-points is selected using a thresholding step. Notwithstanding, more sophisticated selection criterion can also in principle be implemented within the framework we discuss.

%The \emph{filtered derivative}, defined for a sequence of signal observations at the time $t$ with a window $\theta$, is given by
%\begin{equation} \label{eq:filteredderivative}
%\mathrm{FD}_{\theta}(t) = -\frac{1}{\theta} \sum^{t}_{i=t-\theta+1} Y(i) + \frac{1}{\theta} \sum^{t+\theta}_{i=t+1} Y(i).
%\end{equation}
%The filtered derivative approach for change-point estimation was proposed in \cite{BenBas:84} and subsequently analyzed in \cite{AntHus:94,Ber:00,BerFhiGui:11}. It detects changes by measuring the approximate size of discontinuity in sequences of consecutive signal observations.

\subsection{Our approach to high-dimensional change-point estimation} \label{sec:algorithm}

%In particular, the sequence of signals that we observe in our change-point estimation task is also \emph{structured} and so we need to take structural information into account when estimating the set of change-points. By doing so, we may potentially estimate the underlying signal accurately and thus distinguish change from noise reliably, even for sequences of signals that do not remain stationary over long periods of time.

%The precise manner in which we propose to utilize structural information in the estimation process is through the insertion of a signal denoising operation within the estimation algorithm.

%Moreover, since atomic norm thresholding is based on convex optimization, the proposed algorithm enjoys attractive computational properties and is also amenable to meaningful analyses.

We base our method on the principle that more effective signal denoising by exploiting the low-dimensional structure underlying the sequence $\bfx^{\star}[t]$ enables improved change-point estimation.  The formal steps of our algorithm for obtaining an estimate $\hat{\tau}$ of $\tau^{\star}$ are as follows:
\begin{enumerate}
\item \textbf{[Input]:} $\{\bfy[t] \}_{t=1}^n$ the sequence of signal observations, a choice of parameters $\theta,\gamma,\lambda$ to be employed in the algorithm, and a specification of a convex set $\mathcal{C}$.
\item \textbf{[Filtering]:} Compute the sample means $\bar{\bfy}[i] = \frac{1}{\theta} \sum^{i+\theta -1}_{t=i} \bfy[t], 1\leq i \leq n -\theta +1$.
\item \textbf{[Denoising]:} Let $\hat{\bfx}[t], 1\leq t \leq n - \theta +1$ be the solutions to the following convex optimization problems:
\begin{equation} \label{eq:anst}
\hat{\bfx}[t] =  \argmin_{\bfx \in \mathbb{R}^p} \frac{1}{2}  \| \bar{\bfy}[t] - \bfx \|_2^{2} + \lambda \| \bfx \|_{\mathcal{C}}.
\end{equation}
\item \textbf{[Differencing]:} Compute $S[t] := \| \hat{\bfx}[t+1] - \hat{\bfx}[t-\theta+1] \|_2$ for $\theta \leq t \leq n - \theta$.
\item \textbf{[Thresholding]:} For all $t$ such that $S[t] < \gamma$, set $S[t] = 0$.
\item \textbf{[Output]:} $\hat{\tau}=\{ \hat{t}_i\}$. Group time intervals such that consecutive non-zero entries in $S[t]$ are at most $\theta$ apart, and in each group select the index $\hat{t}$ corresponding to the largest value of $S$ as an estimate of the location of a change.
\end{enumerate}
Observe that the proximal denoising step is applied before the differencing step.  This particular integration of proximal denoising and the filtered derivative ensures that the differencing operator is applied to estimates $\bar{\bfx}[t]$ that are closer to the underlying signal $\bfx^\star[t]$ than the raw averages $\bar{\bfy}[t]$ (due to the favorable denoising properties of the proximal denoiser). As discussed in Theorem \ref{thm:changepoint}, this leads to improved change-point performance in comparison to a pure filtered derivative method.  However, the analysis of our approach is complicated by the introduction of the proximal denoising step; we discuss this point in greater detail in Section \ref{sec:quickproof}.

%the differencing operator applied to the sequence $\hat{\bfx}[t]$ offers better change-point is able to detect  signals far away from change-points
%
%is crucial in order to fully harness the denoising properties of the proximal operator. Prior to the differencing step, the averages $\bar{\bfy}[t]$ essentially correspond to a structured signal $\bfx^\star[t]$ corrupted by additive noise (at least if $\bar{\bfy}[t]$ is obtained by averaging the observations over a windoe not containing a change-point).  On the other hand, This to \emph{filtered estimates} of the observations rather than to each individual observation.

The parameter $\theta$ determines the window over which we compute the sample mean, and it controls the resolution to which we estimate change-points.  A larger value of $\theta$ allows the algorithm to detect small changes, although if $\theta$ is chosen too large, multiple change-points may be mistaken for a single change-point.  A smaller choice of $\theta$ increases the resolution of the change-point estimates, but small changes cannot be reliably detected.  The parameter $\gamma$ specifies the threshold for declaring changes, and it governs the size of the change-points that can be reliably estimated.  A small choice of $\gamma$ allows the algorithm to detect smaller changes but it also increases the occurrence of false positives.  Conversely, a larger value of $\gamma$ reduces the number of false positives, but only those changes that are sufficiently large in magnitude may be detected by the algorithm (i.e., the number of false negatives may increase).  In Theorem \ref{thm:changepoint}, we give precise guidelines for the choices of the parameters $(\theta,\gamma,\lambda)$ to guarantee reliable change-point estimation under suitable conditions via the method described above.

%First, we note that the parameter $\theta$ determines the window over which we compute the sample mean.  Second, the parameter $\gamma$ determines the size of the smallest change that the algorithm detects.

%\subsection{Performance Guarantees}

%Before we state the main result, we define a couple of quantities.  Let $\Delta_{\min} := \min_{t \in \tau^{\star}} \| \bfx^{\star}[t] - \bfx^{\star}[t+1] \|_2 $ denote the size of the smallest change, and let $T_{\min} := \min_{t_i, t_j \in \tau^{\star},t_i \neq t_j} | t_i -t_j|$ denote the minimum time lapse between two successive change-points.

\begin{theorem} \label{thm:changepoint} Consider a sequence of observations $\bfy[t] = \bfx^{\star}[t]+\bfe[t]$, $t =1,\ldots,n$, where each $\bfx^{\star}[t] \in \mathbb{R}^p$ and each $\bfe[t] \sim \mathcal{N}(0,\sigma^2 I_{p\times p})$ independently. Let $\tau^{\star} \subset \{1,\dots,n\}$ be such that $t \in \tau^{\star} \Leftrightarrow \bfx^{\star}[t] \neq \bfx^{\star}[t+1]$, let $\Delta_{\min} = \min_{t \in \tau^{\star}} \| \bfx^{\star}[t] - \bfx^{\star}[t+1] \|_2 $, let $T_{\min} = \min_{t_i, t_j \in \tau^{\star},t_i \neq t_j} | t_i -t_j|$, and let $\mathcal{X}=\{ \bfx^{\star}[1],\ldots,\bfx^{\star}[n] \}$. Suppose $\Delta_{\min}$ and $T_{\min}$ satisfy
\begin{equation} \label{eq:changepoint_sufficient}
\Delta_{\min}^{2} T_{\min} \geq 64 \sigma^2 \{ \eta_{\mathcal{C}}(\mathcal{X}) + r\sqrt{ 2 \log n} \}^2
\end{equation}
for some $r>1$ and some convex set $\mathcal{C}$, where $\eta_{\mathcal{C}}(\mathcal{X})$ is as defined in \eqref{eq:etadefn}, and $\tau^* \subset \{ T_{\min}/4,\ldots,n-T_{\min}/4 \}$. Suppose we apply our change-point estimation algorithm with any choice of parameters $\theta,\gamma$, and $\lambda$ satisfying
\begin{enumerate}
\item $T_{\min}/4 \geq \theta$,
\item $\Delta_{\min}/2 \geq \gamma \geq 2\frac{\sigma}{\sqrt{\theta}}\{  \eta_{\mathcal{C}}(\mathcal{X}) + r\sqrt{2\log n}\}$, and
\item $ \lambda = \frac{\sigma}{\sqrt{\theta}}\underset{\tilde{\lambda}}{\argmin}  \underset{\bfx^{\star}[t] \in \mathcal{X}}{\max} \biggl\{ \underset{g \sim \mathcal{N}(0,I_{p \times p})}{\expc} [ \mathrm{dist}(g, \tilde{\lambda} \cdot \partial \| \bfx^{\star}[t] \|_{\mathcal{C}}) ] \biggr\}$.
\end{enumerate} Then the algorithm recovers an estimate of the change-points $\hat{\tau}$ satisfying
\begin{enumerate}
\item $|\hat{\tau}| = |\tau^{\star}|$
\item $| \hat{t}_i-t_i^{\star} | \leq \min \{ (4r\sqrt{\log n} / \eta_{\mathcal{C}}(\mathcal{X})+4) \frac{\sigma \eta_{\mathcal{C}}(\mathcal{X})}{\Delta{\min}}  \sqrt{\theta},\theta \}$ for all $i$, where $\hat{t}_i$ and $t_i^{\star}$ are the $i$-th elements of $\hat{\tau}$ and $\tau^{\star}$ when ordered sequentially,
\end{enumerate}
%with probability greater than $1- 4n^{1-r^2}-8 \exp (- c^2 \eta_{\mathcal{C}}(\mathcal{X})^2/8 + \log \theta + \log |\tau^{\star}| )$.
with probability greater than $1- 5n^{1-r^2}$.
\end{theorem}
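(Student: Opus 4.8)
The plan is to reduce the analysis of the whole pipeline to a single per-window denoising estimate, and then to track how the differencing statistic $S[t]$ behaves as a function of the offset of $t$ from a true change-point. Write $\rho := \frac{\sigma}{\sqrt{\theta}}\{ \eta_{\mathcal{C}}(\mathcal{X}) + r\sqrt{2\log n}\}$. The starting observation is that for a window $[i,i+\theta-1]$ lying entirely within a single constant segment, the filtered average is $\bar{\bfy}[i] = \bfx^{\star}[i] + \bar{\bfe}[i]$ with $\bar{\bfe}[i] \sim \mathcal{N}(0,\tfrac{\sigma^2}{\theta}I)$; this is the source of the effective noise level $\sigma/\sqrt{\theta}$. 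Applying the denoising bound $\|\bfx^{\star}[i] - \hat{\bfx}[i]\|_2 \le \mathrm{dist}(\bar{\bfe}[i], \lambda\,\partial\|\bfx^{\star}[i]\|_{\mathcal{C}})$ and the scaling $\bar{\bfe}[i] = \tfrac{\sigma}{\sqrt{\theta}}\textbf{g}$ with the $\lambda$ prescribed in the theorem, the right-hand side equals $\tfrac{\sigma}{\sqrt{\theta}}\,\mathrm{dist}(\textbf{g}, \tilde\lambda^{\star}\,\partial\|\bfx^{\star}[i]\|_{\mathcal{C}})$, whose expectation is at most $\eta_{\mathcal{C}}(\mathcal{X})$ by the definition \eqref{eq:etadefn}. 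Since $\textbf{g}\mapsto\mathrm{dist}(\textbf{g},\cdot)$ is $1$-Lipschitz, Gaussian concentration gives a tail $e^{-u^2/2}$; taking $u = r\sqrt{2\log n}$ yields per-window failure probability $n^{-r^2}$, and a union bound over the $O(n)$ windows (and over the finitely many reference signals used below) produces an event of probability at least $1 - 5n^{1-r^2}$ on which every clean window obeys $\|\hat{\bfx}[i]-\bfx^{\star}[i]\|_2 \le \rho$.

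The main obstacle is that windows straddling a change-point average two distinct signal values, so the ``clean'' part of $\bar{\bfy}[i]$ is a convex combination $\mu = \alpha\,\bfx_b + (1-\alpha)\,\bfx_a$ of the adjacent signal values, which need not lie in the structured model; the denoising bound cannot be invoked with $\mu$ as the reference. I would resolve this using the generalized estimate of Appendix~\ref{sec:prep}, whose essential content is that the proximal operator in \eqref{eq:anst} is nonexpansive. Concretely, comparing the straddled denoising $\hat{\bfx}[i]=\mathrm{prox}(\mu+\bar{\bfe}[i])$ with the hypothetical clean denoising $\mathrm{prox}(\bfx_a+\bar{\bfe}[i])$ gives $\|\hat{\bfx}[i]-\mathrm{prox}(\bfx_a+\bar{\bfe}[i])\|_2 \le \|\mu-\bfx_a\|_2 = \alpha\,\|\bfx_a - \bfx_b\|_2$, and since $\bfx_a$ is a genuine signal value covered by $\eta_{\mathcal{C}}(\mathcal{X})$ the latter proximal point lies within $\rho$ of $\bfx_a$. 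Thus a straddled estimate stays within $\alpha\,\|\bfx_a-\bfx_b\|_2 + \rho$ of $\bfx_a$ (and symmetrically of $\bfx_b$), which is precisely the control on the ``blur'' that the localization argument needs.

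With these two bounds in hand, the behavior of $S[t]$ near a change-point $t^{\star}$ (with jump $\Delta^{\star} = \|\bfx_a - \bfx_b\|_2 \ge \Delta_{\min}$) follows from the triangle inequality. At $t=t^{\star}$ both windows $[t^{\star}-\theta+1,t^{\star}]$ and $[t^{\star}+1,t^{\star}+\theta]$ are clean, so $S[t^{\star}] \ge \Delta^{\star} - 2\rho \ge \Delta_{\min}/2 \ge \gamma$ using the hypotheses $2\rho \le \gamma \le \Delta_{\min}/2$; hence each true change-point survives thresholding. Away from every change-point both windows are clean with a common value, so $S[t] \le 2\rho \le \gamma$ and is set to zero, ruling out spurious detections. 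For an offset $t = t^{\star}+\delta$ with $0\le\delta<\theta$ the straddled window carries a fraction $\delta/\theta$ of the far value, so the nonexpansiveness bound yields $S[t^{\star}+\delta] \le \tfrac{\theta-\delta}{\theta}\Delta^{\star} + 2\rho$. Since the selected index $\hat{t}$ maximizes $S$ over its group, $S[\hat{t}] \ge S[t^{\star}]$; combining the upper and lower bounds forces $\tfrac{|\hat{t}-t^{\star}|}{\theta}\Delta^{\star} \le 4\rho$, i.e. $|\hat{t}-t^{\star}| \le 4\rho\,\theta/\Delta_{\min} = \tfrac{4\sigma\sqrt{\theta}}{\Delta_{\min}}\{\eta_{\mathcal{C}}(\mathcal{X}) + r\sqrt{2\log n}\}$, matching the stated localization radius up to constants (and capped at $\theta$, beyond which the window is clean again).

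Finally I would assemble the count $|\hat\tau| = |\tau^{\star}|$ from a separation argument. The statistic $S[t]$ can be perturbed by the change at $t^{\star}$ only when one of its two windows contains $t^{\star}$, i.e. for $t \in [t^{\star}-\theta,\,t^{\star}+\theta-1]$, a band of width $2\theta$. Because $T_{\min}\ge 4\theta$, the bands of two successive change-points are separated by a run of at least $2\theta > \theta$ indices on which $S$ vanishes, so the grouping step never merges two change-points nor splits one; together with the boundary condition $\tau^{\star}\subset\{T_{\min}/4,\dots,n-T_{\min}/4\}$ (which keeps all relevant windows and the statistic $S[t]$, $\theta\le t\le n-\theta$, well defined) this yields exactly one estimate per true change-point with the localization above. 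I expect the straddling-window step to be the crux: everything else is triangle inequalities and a union bound, whereas handling the non-structured blurred mean is what forces the appeal to the nonexpansiveness-based generalization of the denoising bound rather than the off-the-shelf estimate.
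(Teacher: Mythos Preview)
Your proposal is correct and follows essentially the same route as the paper: the paper packages the argument into three events $E_1,E_2,E_3$ (detection at change-points, quiescence in $\tau_{\text{far}}$, and dominance of $S[t^{\star}]$ over $S[t^{\star}+\delta]$ in a buffer), bounding each via the same ingredients you invoke---Proposition~\ref{thm:error_robust} for clean and straddling windows, nonexpansiveness (Lemma~\ref{thm:conc_eq}), triangle inequalities, and Gaussian concentration with a union bound---whereas you front-load a single uniform per-window event and then argue deterministically. The only substantive difference is that the paper concentrates the two-window statistic $\|\hat{\bfx}[t+1]-\hat{\bfx}[t-\theta+1]\|_2$ directly via its $\sqrt{2}$-Lipschitz property (Proposition~\ref{thm:beforeafterlipschitz}), which yields the stated localization radius with $r\sqrt{\log n}$ rather than the $r\sqrt{2\log n}$ your per-window bound gives; you correctly flag this as a constant.
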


\paragraph{Remark.} If condition \eqref{eq:changepoint_sufficient} is satisfied then the choice of $\theta = T_{\min}/4$ and $\gamma = \Delta_{\min}/2$ satisfies the requirements in Theorem \ref{thm:changepoint}. %However, in order for the high probability term to be meaningful, we would need $ c^2 \geq 8\frac{\log \theta + \log |\tau^{\star}|}{ \eta_{\mathcal{C}}(\mathcal{X})^2}$. If this forces $c$ to be exceedingly large, the bound $| t_i^{\star} - \hat{t}_i | \leq (c+4) \frac{\sigma \eta_{\mathcal{C}}(\mathcal{X})}{\Delta_{\min}}  \sqrt{\theta}$ becomes vacuous; the bound $| t_i^{\star} - \hat{t}_i | \leq \theta$, however, always remains effective.

As a concrete illustration of this theorem, if each element $\bfx^\star[t] \in \R^p, ~ t=1,\dots,n$ of the signal sequence is a vector consisting of at most $s$ nonzero entries, then our algorithm (with a proximal denoiser based on the $\ell_1$-norm) estimates change-points reliably under the condition $\Delta^2_{\min} T_{\min} \gtrsim \sigma^2 (s \log(\tfrac{p}{s}) + \log(n))$.  Similarly, if each element $\bfx^\star[t] \in \R^{d \times d}, ~ t=1,\dots,n$ is a matrix with rank at most $r$, then our algorithm (with a proximal denoiser now based on the nuclear norm) provides reliable change-point estimation performance under the condition $\Delta^2_{\min} T_{\min} \gtrsim \sigma^2 (rd + \log(n))$.

\subsection{Proof of Theorem \ref{thm:changepoint}} \label{sec:quickproof}

The proof broadly proceeds by bounding the probabilities of the following three events:
\begin{align} \label{eq:eventse1}
E_1: &= \{ S[t] \geq \gamma, \forall t \in \tau^{\star}\} \\
E_2 :&=\{ S[t] < \gamma, \forall t \in \tau_{\text{far}} \} \label{eq:eventse2} \\
E_3 : &= \{\| \hat{\bfx}[t+1] - \hat{\bfx}[t -\theta +1] \|_2 > \| \hat{\bfx}[t+1 +\delta] - \hat{\bfx}[t -\theta +1 +\delta] \|_2 , \forall (t,\delta)  \in \tau_{\text{buffer}} \}.\label{eq:eventse3}
\end{align}
Here $\tau_{\text{far}} = \{i: \theta\leq i \leq n-\theta,  |i-j|> \theta, j \in \tau^{\star} \}$ and $\tau_{\text{buffer}} = \{ (t_i^{\star}, \delta ): t_i^{\star} \in \tau^{\star},  \theta \geq |\delta| >  (4r\sqrt{\log n} / \eta_{\mathcal{C}}(\mathcal{X})+4) \frac{\sigma \eta_{\mathcal{C}} (\mathcal{X})}{\Delta_{\min}}\sqrt{\theta} \}$. Note that $\tau_{\text{buffer}}$ defines a non-empty set if $\theta >  (4r\sqrt{\log n} / \eta_{\mathcal{C}}(\mathcal{X})+4)^2 \sigma^2 \eta^2_{\mathcal{C}} (\mathcal{X})/\Delta_{\min}^2$. The event $E_1$ corresponds to the atomic-norm-thresholded derivative exceeding the threshold $\gamma$ for all change-points, while event $E_2$ corresponds to the atomic-norm-thresholded derivative \emph{not} exceeding the threshold $\gamma$ in regions ``far away" from the change-points.  Bounding the probabilities of these two events is sufficient for a weaker recovery guarantee than is provided by Theorem \ref{thm:changepoint}, which is that any estimated change-point $\hat{t}\in \hat{\tau}$ will be within $\theta$ of an actual change-point $t^{\star} \in \tau^{\star}$.  However, the selection of the \emph{maximum} derivative in Step 6 of the algorithm often leads to far more accurate estimates of the locations of change-points.  To prove that this is the case, we consider the event $E_3$ corresponding to the atomic-norm-thresholded derivative at the change-point being larger than the atomic-norm-thresholded derivatives at other points that are still within a window of $\theta$ but outside a small buffer region around the change-point.

%The proof is divided into two parts. The first part builds on prior works combining approaches from convex geometry and Gaussian concentration to analyze properties of denoising operations based on the atomic norm \cite{BhaTanRec:13,FoyMac:13,OymHas:12}. The second part combines the preparatory results from the first part to prove the main theorem. We point out that the analysis in the first part is also applicable to analyzing convex optimization-based approaches for ill-posed linear inverse problems. We defer the first part of the proof to the appendix and proceed with the second part.

%The preparatory results in the appendix allow us to show that the events $E_1,E_2$, and $E_3$ each hold with high probability, which we state in the form of Proposition \ref{thm:probboundsE1E2E3}. The proof of Proposition \ref{thm:probboundsE1E2E3} given in the appendix.

The next proposition gives bounds on the probabilities of the events $E_1,E_2,E_3$:

\begin{proposition} \label{thm:probboundsE1E2E3}
Under the setup and conditions of Theorem \ref{thm:changepoint}, we have the following bounds:
\begin{align}
\prob (E_{1}^c ) \leq 2n^{1-r^2}, ~~~ \prob (E_{2}^c ) \leq 2n^{1-r^2}, ~~~ \prob (E_{3}^c ) \leq n^{1-r^2}.
%\prob (E_{3}^c ) \leq& 8\theta |\tau^{\star} | \exp (- c^2 \eta(\mathcal{X})^2/8 )
\end{align}
The events $E_1,E_2,E_3$ are defined in \eqref{eq:eventse1}, \eqref{eq:eventse2}, and \eqref{eq:eventse3}.
\end{proposition}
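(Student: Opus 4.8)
The plan is to reduce all three tail bounds to a single denoising estimate that holds window-by-window, and then to establish $E_1$ and $E_2$ by elementary triangle inequalities and $E_3$ by a nonexpansiveness argument for the proximal map. First I would record what the filtering step does to the noise. The averaged noise $\bar{\bfe}[i]:=\frac{1}{\theta}\sum_{t=i}^{i+\theta-1}\bfe[t]$ is distributed as $\mathcal{N}(0,\frac{\sigma^2}{\theta}I_{p\times p})$, so we may write $\bar{\bfe}[i]=\frac{\sigma}{\sqrt{\theta}}\textbf{g}$ with $\textbf{g}\sim\mathcal{N}(0,I_{p\times p})$. The condition $T_{\min}/4\geq\theta$ guarantees that any length-$\theta$ window meets at most one change-point; when a window lies entirely inside a constant stretch with value $\textbf{z}\in\mathcal{X}$, the denoiser input is $\textbf{z}+\bar{\bfe}[i]$ and the bound recalled in Section~\ref{sec:background} gives $\|\hat{\bfx}[i]-\textbf{z}\|_2\leq\mathrm{dist}(\bar{\bfe}[i],\lambda\cdot\partial\|\textbf{z}\|_{\mathcal{C}})$. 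With the choice of $\lambda$ prescribed in the theorem this equals $\frac{\sigma}{\sqrt{\theta}}\,\mathrm{dist}(\textbf{g},\tilde{\lambda}\cdot\partial\|\textbf{z}\|_{\mathcal{C}})$, whose expectation is at most $\eta_{\mathcal{C}}(\mathcal{X})$ by \eqref{eq:etadefn}. Since $\textbf{g}\mapsto\mathrm{dist}(\textbf{g},\tilde{\lambda}\cdot\partial\|\textbf{z}\|_{\mathcal{C}})$ is $1$-Lipschitz, Gaussian concentration with deviation $r\sqrt{2\log n}$ gives, for each fixed window and value, $\prob(\|\hat{\bfx}[i]-\textbf{z}\|_2>\frac{\sigma}{\sqrt{\theta}}\{\eta_{\mathcal{C}}(\mathcal{X})+r\sqrt{2\log n}\})\leq n^{-r^2}$. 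This per-window estimate is the workhorse for all three events.

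For $E_1$, the condition $T_{\min}/4\geq\theta$ places the windows $[t-\theta+1,t]$ and $[t+1,t+\theta]$ at a change-point $t\in\tau^{\star}$ inside the two flanking constant stretches, whose values differ by at least $\Delta_{\min}$; a reverse triangle inequality and the per-window estimate give $S[t]\geq\Delta_{\min}-2\frac{\sigma}{\sqrt{\theta}}\{\eta_{\mathcal{C}}(\mathcal{X})+r\sqrt{2\log n}\}\geq\Delta_{\min}-\gamma\geq\gamma$, the last two steps using $\gamma\geq 2\frac{\sigma}{\sqrt{\theta}}\{\eta_{\mathcal{C}}(\mathcal{X})+r\sqrt{2\log n}\}$ and $\gamma\leq\Delta_{\min}/2$. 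For $E_2$, every $i\in\tau_{\text{far}}$ has both windows in the \emph{same} constant stretch, so the two signals coincide and the ordinary triangle inequality yields $S[i]\leq 2\frac{\sigma}{\sqrt{\theta}}\{\eta_{\mathcal{C}}(\mathcal{X})+r\sqrt{2\log n}\}\leq\gamma$. Each event fails only if one of the $O(n)$ windows feeding it violates the per-window estimate, so a union bound over the relevant windows (at most $2n$ in each case) produces the bounds $\prob(E_1^c),\prob(E_2^c)\leq 2n^{1-r^2}$.

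The event $E_3$ is the crux. Fix a change-point $t^{\star}$ and a shift $\delta$ with $\theta\geq|\delta|$, and take $\delta>0$, the case $\delta<0$ being symmetric. Shifting the differencing window by $\delta$ forces the earlier window to straddle $t^{\star}$, so its filtered input has noise-free part equal to the affine combination $\bar{\bfx}_\delta=\bfx^{\star}[t^{\star}]+\frac{\delta}{\theta}(\bfx^{\star}[t^{\star}+1]-\bfx^{\star}[t^{\star}])$, which need not be structured, and the denoising bound of Section~\ref{sec:background} does not apply directly. The plan is to invoke the generalized denoising estimate of Appendix~\ref{sec:prep}: because $\mathrm{prox}_{\lambda\|\cdot\|_{\mathcal{C}}}$ is nonexpansive, comparing this straddling denoiser with the denoiser of $\bfx^{\star}[t^{\star}+1]+\bar{\bfe}[i]$ (whose inputs differ by exactly $(1-\frac{\delta}{\theta})\Delta$ in $\ell_2$, where $\Delta:=\|\bfx^{\star}[t^{\star}+1]-\bfx^{\star}[t^{\star}]\|_2\geq\Delta_{\min}$) shows that the straddling output lies within $(1-\frac{\delta}{\theta})\Delta+\mathrm{dist}(\bar{\bfe}[i],\lambda\cdot\partial\|\bfx^{\star}[t^{\star}+1]\|_{\mathcal{C}})$ of $\bfx^{\star}[t^{\star}+1]$. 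Since the accompanying window $\hat{\bfx}[t^{\star}+1+\delta]$ is clean with value $\bfx^{\star}[t^{\star}+1]$, anchoring both terms at $\bfx^{\star}[t^{\star}+1]$ and using the per-window estimate bounds the shifted difference by $(1-\frac{\delta}{\theta})\Delta+2\frac{\sigma}{\sqrt{\theta}}\{\eta_{\mathcal{C}}(\mathcal{X})+r\sqrt{2\log n}\}$, whereas $S[t^{\star}]\geq\Delta-2\frac{\sigma}{\sqrt{\theta}}\{\eta_{\mathcal{C}}(\mathcal{X})+r\sqrt{2\log n}\}$ as in the $E_1$ argument.

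Subtracting these two estimates, $S[t^{\star}]$ strictly exceeds the shifted difference once $\frac{\delta}{\theta}\Delta>4\frac{\sigma}{\sqrt{\theta}}\{\eta_{\mathcal{C}}(\mathcal{X})+r\sqrt{2\log n}\}$, which (using $\Delta\geq\Delta_{\min}$) holds precisely for $|\delta|$ beyond the radius defining $\tau_{\text{buffer}}$; a union bound over the windows entering these comparisons gives $\prob(E_3^c)\leq n^{1-r^2}$. I expect this straddling-window step to be the main obstacle, since it requires controlling the proximal denoiser on an input whose noise-free part leaves the signal family $\mathcal{X}$---exactly the situation the nonexpansiveness argument of Appendix~\ref{sec:prep} is designed to handle---and since the precise buffer radius emerges only after carefully accounting for which of the four contributing denoising errors survive in the difference $S[t^{\star}]$ minus the shifted difference.
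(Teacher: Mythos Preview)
Your treatment of $E_1$ and $E_2$ matches the paper's proof almost exactly; the only cosmetic difference is that you concentrate the $1$-Lipschitz map $\textbf{g}\mapsto\mathrm{dist}(\textbf{g},\tilde{\lambda}\cdot\partial\|\textbf{z}\|_{\mathcal{C}})$ whereas the paper concentrates $\bfe\mapsto\|\hat{\bfx}(\bfe)-\textbf{z}\|_2$ directly (Corollary~\ref{thm:optlipschitz}). Both are $1$-Lipschitz with mean at most $\tfrac{\sigma}{\sqrt{\theta}}\eta_{\mathcal{C}}(\mathcal{X})$, so the resulting bounds coincide.

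For $E_3$ there is a genuine quantitative gap. Your argument yields the comparison $S[t^{\star}]>S[t^{\star}+\delta]$ only when
\[
\frac{|\delta|}{\theta}\,\Delta \;>\; 4\,\frac{\sigma}{\sqrt{\theta}}\bigl\{\eta_{\mathcal{C}}(\mathcal{X})+r\sqrt{2\log n}\bigr\},
\]
but the buffer set $\tau_{\text{buffer}}$ is defined with $r\sqrt{\log n}$, not $r\sqrt{2\log n}$. For $\Delta=\Delta_{\min}$ your threshold is strictly larger than the buffer radius, so there are pairs $(t^{\star},\delta)\in\tau_{\text{buffer}}$ for which your deterministic implication fails; you therefore do not recover $\prob(E_3^c)\leq n^{1-r^2}$ as stated. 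The $\sqrt{2}$ is not an artifact of bookkeeping: it comes from splitting the fluctuation budget among \emph{four} per-window $1$-Lipschitz concentrations, each needing deviation $r\sqrt{2\log n}$ to reach tail $n^{-r^2}$.

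The paper avoids this loss by decoupling expectation from fluctuation. It first bounds the \emph{expectations} via Proposition~\ref{thm:error_robust}, obtaining $\expc\,S[t^{\star}]-\expc\,S[t^{\star}+\delta]\geq \tfrac{|\delta|}{\theta}\Delta-4\tfrac{\sigma}{\sqrt{\theta}}\eta_{\mathcal{C}}(\mathcal{X})\geq 4r\tfrac{\sigma}{\sqrt{\theta}}\sqrt{\log n}$ on $\tau_{\text{buffer}}$. It then applies Gaussian concentration directly to each $S[\cdot]=\|\hat{\bfx}[\cdot+1]-\hat{\bfx}[\cdot-\theta+1]\|_2$, which by Proposition~\ref{thm:beforeafterlipschitz} is $\sqrt{2}$-Lipschitz in the pair of averaged noises. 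With only \emph{two} such concentrations (deviation $2r\tfrac{\sigma}{\sqrt{\theta}}\sqrt{\log n}$ each) one gets tail $n^{-r^2}$ per event and recovers exactly the buffer radius with the $r\sqrt{\log n}$ term. If you rework your $E_3$ step to concentrate $S[\cdot]$ rather than the four individual denoiser outputs, the constants line up.
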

%\textbf{The second inequality in the last line follows from our choice of $c = r \sqrt{8 \log n} / \eta_{\mathcal{C}(\mathcal{X})}$ and a simple bound $\theta |\tau^{\star}| \leq n$}.

The proof of Proposition \ref{thm:probboundsE1E2E3} is given in the Appendix, and it involves overcoming two difficulties.  First, if the filtering operator is applied over a window containing a change-point in Step 2, the average $\bar{\bfy}[t]$ is in effect a superposition of two structured signals corrupted by noise.  This necessitates the analysis of the performance of a proximal denoiser applied to a noisy superposition of structured signals rather than to a single structured signal corrupted by noise.  The second (more challenging) complication arises due to the fact that the differencing operator is applied to the result of a nonlinear mapping of the observations (via the proximal denoiser) rather than to just a linear average of the observations as in a standard filtered derivative framework.  We address these difficulties by exploiting certain properties of the proximal operator such as its non-expansiveness and its robustness to perturbations. Assuming Proposition \ref{thm:probboundsE1E2E3}, the proof of Theorem \ref{thm:changepoint} proceeds as follows:

%, followed by a careful application of these results to derive the conclusion. Our analysis of the proximal operator builds on previous works \cite{BhaTanRec:13,FoyMac:13,OymHas:12}, and is more broadly applicable to studying convex-optimization based approaches for ill-posed linear inverse problems as well as signal denoising.

\emph{Proof of Theorem \ref{thm:changepoint}} From Proposition \ref{thm:probboundsE1E2E3} and the union bound we have that $\prob(E_1 \cap E_2 \cap E_3) \geq 1-5 n^{r^2 -1}$.  We condition on the event $E_1 \cap E_2 \cap E_3$ to complete the proof.  Specifically, conditioning on $E_1$ ensures that we have $S[t] \geq\gamma$ for all $t \in \tau^{\star}$ after Step 5.  Conditioning on $E_2$ implies that all entries of $S$ outside a window of $\theta$ from any change-point are set to $0$ after Step 5.  Hence, after Step 6 all non-zero entries of $S$ that are within a window of at most $\theta$ around a change-point will have been grouped together (for all change-points), which implies that $|\hat{\tau} |=|\tau^{\star}|$.  Finally, conditioning on the event $E_3$ implies that $S[t]$ is larger than $S[t+\delta]$ for all $\delta$ such that $ (4r\sqrt{\log n} / \eta_{\mathcal{C}}(\mathcal{X})+4)  \frac{\sigma \eta_{\mathcal{C}}(\mathcal{X})}{\Delta{\min}}\sqrt{\theta} < |\delta| \leq \theta$ and for all $t\in \tau^{\star}$. Thus, our estimate of the change-point at $t \in \tau^{\star}$ is at most $\min\left\{(4r\sqrt{\log n} / \eta_{\mathcal{C}}(\mathcal{X})+4)  \frac{\sigma \eta_{\mathcal{C}}(\mathcal{X})}{\Delta{\min}}\sqrt{\theta},\theta\right\}$ away, which concludes the proof. $\qed$

%\subsection{Comparisons with Related Work}

\subsection{Signal reconstruction}

Based on the estimated set of change-points $\hat{\tau}$, it is straightforward to obtain good reconstructions of the signal away from the change-points via proximal denoising \eqref{eq:proxoperator}.  This result follows from the analysis in \cite{BhaTanRec:13,OymHas:12}, but we state and prove it here for completeness:

%By employing the proximal denoising operator to an average of the signalFirst we exploit the piecewise-constant property of the signal by estimating the signal to be constant across all intervals that correspond to a pair of consecutive estimated change-points. Next we apply the proximal operator to estimate the underlying signal over each interval. The following guarantee characterizes the quality of the reconstructed signal based on these ideas.

\begin{proposition} \label{thm:squarederror}
Suppose that the assumptions of Theorem \ref{thm:changepoint} hold. Let $t_1, t_2 \in \hat{\tau}$ be two consecutive estimates of change-points, let $ \lambda' = \frac{\sigma}{\sqrt{t_2 - t_1 - 2\theta}} \argmin_{\tilde{\lambda}}  \max_{\bfx^{\star}[t] \in \mathcal{X}} \{ \expc_{g \sim \mathcal{N}(0,I_{p \times p})} [ \mathrm{dist}(g, \tilde{\lambda} \cdot \partial \| \bfx^{\star}[t] \|_{\mathcal{C}}) ] \}$, and let $\bar{\bfy} =\frac{1}{t_2 - t_1- 2\theta} \sum^{t_2 - \theta}_{t=t_1 + \theta +1} \bfy[t] $.  Denote the solution of the proximal denoiser \eqref{eq:proxoperator} applied to $\bar{\bfy}$ as $\bar{\bfx}$:
\begin{equation*}
\bar{\bfx} := \argmin_{\bfx} \frac{1}{2} \bigl \| \bar{\bfy} -\bfx \bigr\|_2^2 + \lambda' \| \bfx \|_{\mathcal{C}}.
\end{equation*}
Letting $\bar{\bfx}$ be the estimate of the signal ${\bfx}^{\star}[t]$ over the interval $\{t_1 +\theta + 1,\dots,t_2 - \theta\}$, we have that
\begin{equation*}
\| \bar{\bfx} - {\bfx}^{\star}[t]  \|^2_2 \leq \frac{2\sigma^2}{t_2 - t_1 - 2\theta} \{ \eta_{\mathcal{C}}(\mathcal{X})^2 + s^2 \}
\end{equation*}
with probability greater than $1- 4 n^{1-r^2} - \exp(-s^2 /2)$, for all $t_1 + \theta +1 \leq t \leq t_2 - \theta$.
\end{proposition}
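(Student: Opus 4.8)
The plan is to reduce the statement to a single application of the deterministic proximal denoising bound recalled in Section~\ref{sec:background}, and then control the residual noise through Gaussian concentration. First I would condition on the event $E_1 \cap E_2$ of Proposition~\ref{thm:probboundsE1E2E3}, which holds with probability at least $1 - 4n^{1-r^2}$. On this event $|\hat{\tau}| = |\tau^{\star}|$ and each estimate lies within $\theta$ of its associated true change-point. Writing $t_1 = \hat{t}_i$, $t_2 = \hat{t}_{i+1}$ with corresponding true change-points $t_i^{\star}, t_{i+1}^{\star}$, the bounds $\hat{t}_i \leq t_i^{\star}+\theta$ and $\hat{t}_{i+1} \geq t_{i+1}^{\star}-\theta$ give $\{t_1+\theta+1,\ldots,t_2-\theta\} \subseteq \{t_i^{\star}+1,\ldots,t_{i+1}^{\star}\}$, so the signal is constant on the averaging window; call this common value $\bfx^{\star}$. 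Setting $N := t_2 - t_1 - 2\theta$, it follows that $\bar{\bfy} = \bfx^{\star} + \bar{\bfe}$ with $\bar{\bfe} = \tfrac{1}{N}\sum_t \bfe[t] \sim \mathcal{N}(0,\tfrac{\sigma^2}{N} I_{p\times p})$.

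Next I would invoke the denoising guarantee from Section~\ref{sec:background}: since $\bar{\bfy} = \bfx^{\star}+\bar{\bfe}$, the output $\bar{\bfx}$ satisfies $\| \bar{\bfx} - \bfx^{\star} \|_2 \leq \mathrm{dist}(\bar{\bfe}, \lambda' \cdot \partial\|\bfx^{\star}\|_{\mathcal{C}})$. Writing $\bar{\bfe} = \tfrac{\sigma}{\sqrt{N}}\mathbf{g}$ with $\mathbf{g} \sim \mathcal{N}(0,I_{p\times p})$ and using positive homogeneity of the distance together with the choice $\lambda' = \tfrac{\sigma}{\sqrt{N}}\tilde{\lambda}^{\star}$, where $\tilde{\lambda}^{\star}$ is the minimizer appearing in \eqref{eq:etadefn}, yields
\[
\| \bar{\bfx} - \bfx^{\star} \|_2 \leq \tfrac{\sigma}{\sqrt{N}}\,\mathrm{dist}\!\left(\mathbf{g}, \tilde{\lambda}^{\star}\cdot \partial\|\bfx^{\star}\|_{\mathcal{C}}\right).
\]
The map $\mathbf{g} \mapsto \mathrm{dist}(\mathbf{g}, \tilde{\lambda}^{\star}\partial\|\bfx^{\star}\|_{\mathcal{C}})$ is $1$-Lipschitz, and its expectation is at most $\eta_{\mathcal{C}}(\mathcal{X})$ by definition \eqref{eq:etadefn} (the maximum there runs over all signals in $\mathcal{X}$, and $\bfx^{\star}$ is one of them). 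Gaussian concentration for Lipschitz functions then gives $\mathrm{dist}(\mathbf{g},\tilde{\lambda}^{\star}\partial\|\bfx^{\star}\|_{\mathcal{C}}) \leq \eta_{\mathcal{C}}(\mathcal{X}) + s$ with probability at least $1-\exp(-s^2/2)$. Substituting, squaring, and applying $(a+b)^2 \leq 2(a^2+b^2)$ produces $\| \bar{\bfx} - \bfx^{\star} \|_2^2 \leq \tfrac{2\sigma^2}{N}\{\eta_{\mathcal{C}}(\mathcal{X})^2 + s^2\}$, and a union bound over the failure of $E_1\cap E_2$ and the concentration event gives the stated probability $1 - 4n^{1-r^2} - \exp(-s^2/2)$.

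The one genuinely delicate point — and where I expect the main obstacle to lie — is that the averaging window $\{t_1+\theta+1,\ldots,t_2-\theta\}$ is itself random: its endpoints are functions of $\hat{\tau}$, hence of the same noise realizations that enter $\bar{\bfe}$. In particular the windows driving $S[\cdot]$ near $t_i^{\star}$ and the inner averaging window can overlap, so I cannot simply treat $\bar{\bfe}$ as a fresh Gaussian vector independent of the conditioning event. To make the concentration step rigorous I would decouple these two sources of randomness, for instance by arguing conditionally on $E_1\cap E_2$ (which fixes the correspondence between $\hat{\tau}$ and $\tau^{\star}$ and confines the window to a single true constant segment) and then establishing the distance bound uniformly over the admissible window endpoints, or by separating the noise coordinates that determine $\hat{t}_i$ from those averaged in $\bar{\bfy}$. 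Once this measurability issue is handled, the remaining computation is routine given the denoising inequality and Gaussian concentration.
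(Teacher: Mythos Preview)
Your proposal is correct and follows essentially the same route as the paper: condition on $E_1\cap E_2$ (probability at least $1-4n^{1-r^2}$), observe that the true signal is constant on the averaging window, apply the deterministic proximal bound $\|\bar{\bfx}-\bfx^{\star}\|_2\leq \mathrm{dist}(\bar{\bfe},\lambda'\partial\|\bfx^{\star}\|_{\mathcal{C}})$, and then use Gaussian concentration of the $1$-Lipschitz distance together with $(a+b)^2\leq 2a^2+2b^2$; the paper packages the last two steps as Lemma~\ref{thm:errorboundwhp} and finishes with a union bound. The dependence issue you flag---that the endpoints $t_1,t_2$ are functions of the same noise entering $\bar{\bfe}$---is a genuine subtlety, but the paper's three-line proof does not address it either; it simply conditions on $E_1\cap E_2$ and proceeds as if $\bar{\bfe}$ were a fresh Gaussian, so you have in fact reproduced (and slightly sharpened, in rigor) the intended argument.
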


The proof of Proposition \ref{thm:squarederror} is given in the Appendix.  The quantities $\Delta_{\min}$ and $T_{\min}$ determine the accuracy of the change-point estimates from Theorem \ref{thm:changepoint}.  Proposition \ref{thm:squarederror} demonstrates that, in addition to these quantities, the \emph{duration of stationarity} of the signal also determines the accuracy of signal reconstruction away from the change-points.

%%% Tradeoffs %%%%%%%%%%%%%%%%%%%%%%%%%%%%%%%%%%%%%%%%%%%%

\section{Tradeoffs in High-Dimensional Change-Point Estimation} \label{sec:tradeoffs}

Data analysis in practice involves a range of challenges beyond the high-dimensionality of the observations that motivated our development in this paper.  For example, in change-point estimation in financial time series, one is typically faced with additional difficulties arising from the extremely rapid rate at which the data are acquired and the requirement that the data be processed in an ``online'' fashion, i.e., the change-point estimation procedure must process the incoming data in ``real time.''  In some settings rapid variations in an underlying phenomenon trigger frequent changes in the sequence of observations, while in other cases small changes in a signal can be difficult to detect when severely corrupted by noise.  In this section, we describe adaptations of the algorithm proposed in Section \ref{sec:mainresults} to handle some of these challenges.  Specifically, Theorem \ref{thm:changepoint} suggests a number of performance tradeoffs that can be obtained in change-point estimation problems by employing suitable variations of our algorithm.  We demonstrate the utility of these modifications in addressing some of the difficulties mentioned above, which highlights the versatility of our approach.

%A feature of the manner in which Theorem \ref{thm:changepoint} is stated is that it is indicative of a number of tradeoffs that arise in change-point estimation. The existence of tradeoffs is informative because they are suggestive of quantities that are fundamental to change-point estimation. In this section we discuss some of these tradeoffs. An interesting implication is that we address certain complexities that arise in change-point estimation by exploiting other features of the dataset. For example, we discuss how small-sized changes can be detected reliably if the signal remains stationary over sufficiently long periods of time, and describe a mechanism to reduce overall computational runtime via increasing the frequency of signal measurements.

\subsection{Change-point frequency and size tradeoffs} \label{sec:freqsizetradeoffs}

The appearance of the term $\Delta_{\min}^2 T_{\min}$ in \eqref{eq:changepoint_sufficient} suggests an explicit relation between the minimum time span between changes, the minimum size of a change, and estimation accuracy. To illustrate the tradeoffs between $\Delta_{\min}$ and $T_{\min}$ clearly, we fix the complexity parameter $\eta_{\mathcal{C}}(\mathcal{X})$ and the number of observations $n$ in this discussion.  As a consequence, the quantity $\Delta_{\min}^{2} T_{\min}$ in \eqref{eq:changepoint_sufficient} can be interpreted as a \emph{resolution} on the types of changes that can be detected.  Specifically, Theorem \ref{thm:changepoint} guarantees reliable estimation of changes whenever $\Delta_{\min}^{2} T_{\min}$ is sufficiently large even if one of $\Delta_{\min}$ or $T_{\min}$ may be small. There are two separate regimes where this observation has interesting implications.

In settings in which $\Delta_{\min}$ is small, i.e., there are change-points where the size of the change is small, Theorem \ref{thm:changepoint} guarantees that all the changes can be detected reliably so long as the distance between change-points ($T_{\min}$) is sufficiently large.  In order to accomplish this, one is required to choose a sufficiently small threshold parameter $\gamma$ (for Step 5 of the procedure) and a suitably large averaging window $\theta$ (for Step 2) with $\theta \lesssim T_{\min}$, in accordance with the requirements of Theorem \ref{thm:changepoint}.  By smoothing over large windows of size $\theta$ and subsequently applying a proximal denoiser, even small-sized changes can be detected as long as the averaging window does not include multiple change-points (hence the condition that $\theta \lesssim T_{\min}$).  The downside with choosing a large value for the parameter $\theta$ is that we do not resolve the locations of the change-points well; in particular, we estimate the locations of each of the change-points to within a resolution of about $\sqrt{\theta}$.  However, detecting small-sized changes in a sequence corrupted by noise necessitates the computation of averages over large windows in Step 2 of our algorithm in order to distinguish genuine changes from spurious ones.  Therefore, the low resolution to which we estimate the locations of change-points is the price to pay for estimating the number of change-points exactly in settings in which some of the changes may be small in size.

In a similar manner, if changes occur frequently in a signal sequence, i.e., the distance between change-points $T_{\min}$ is small, Theorem \ref{thm:changepoint} guarantees that all the changes can be detected reliably if the size of each change $\Delta_{\min}$ is sufficiently large.  In such cases, the averaging window parameter $\theta$ must be chosen to be sufficiently small while the threshold parameter $\gamma$ must be appropriately large with $\gamma \lesssim \Delta_{\min}$, as prescribed in Theorem \ref{thm:changepoint}.  The choice of a small value for $\theta$ ensures that we do not smooth the observation sequence over windows that contain multiple change-points in Step 2 of our method.  However, this restriction of the averaging window size implies that the proximal denoiser in Step 3 is applied to the average of a small number of observations, which negatively impacts its performance.  This limitation underlies the choice of a large value for the threshold parameter $\gamma$ in Step 5 of the algorithm, which ensures that spurious changes resulting from denoising over small windows do not impact the performance of our algorithm.  Consequently, the size of each change must be sufficiently large (as required by the condition that $\gamma \lesssim \Delta_{\min}$) so that the change-points can be reliably estimated from a few noisy observations.  Thus, the size of each change must be sufficiently large in settings with frequent changes so that the number of change-points can be reliably estimated.

%settings in which changes occur frequently, to  is impacted is is implies that the change suCorrespondingly, one has to select a small $\theta$ and a large $\gamma$ in the proposed algorithm.
%
%; the condition that can be Large-sized changes ensure that the
%A consequence of these choices of $\gamma$ and $\theta$ in settings in which the assumptions are not satisfied A downside to these choices of $\gamma$ and $\theta$ is that  sufficiently large in Step 5 of our procedure described in Section[REF]. sufficiently small, such as on the order of $\Delta_{\min}$, while simultaneously choose $\theta$ sufficiently large but satisfy $\theta \lesssim T_{\min}$.

%\if 0

\subsection{Computational complexity and sample size tradeoffs}\label{sec:compstattradeoffs}

In change-point estimation tasks arising in many contemporary problem domains (e.g., financial time series), one is faced with a twin set of challenges: $(a)$ the number of observations $n$ may be quite large due to the increasingly higher frequencies at which data are acquired (this is in addition to the high dimensionality of each observation), and $(b)$ the requirement that these large datasets be processed online or in real time. Consequently, as the number of observations per unit time grows, it is crucial that we adopt a \emph{simpler} algorithmic strategy -- i.e., a method requiring a smaller number of computational steps per observation -- so that the overall computational complexity of our algorithm does not grow with the number of observations.  In this section we describe a convex relaxation approach to adapt the algorithm described in Section \ref{sec:mainresults} to achieve a tradeoff between the number of observations and the overall computational complexity of our procedure; in particular, we demonstrate that in certain change-point estimation problems one can even achieve an overall \emph{reduction} in computational runtime as the number of observations grows.  Our method is based on the ideas presented in \cite{ChaJor:13,CRPW:12} in the context of statistical denoising and of linear inverse problems; here we demonstrate the utility of those insights in change-point estimation.  We note that other researchers have also explored the idea of trading off computational resources and sample size in various inferential problems such as binary classifier learning \cite{BirSha:12,DecGolRon:98,Ser:00,ShaShaTro:12}, in sparse principal component analysis \cite{AmiWai:09,BerRig:13,KBRS:11}, in model selection \cite{AgaBarDuc:12}, and in linear regression \cite{SheLaf:13}.

\paragraph{A modified change-point estimation algorithm} For ease of analysis and exposition, we consider a modification of our change-point estimation procedure from Section \ref{sec:mainresults}.  Specifically, Step 6 of our algorithm is simplified so it only groups time indices corresponding to the nonzero entries of the thresholded derivative values, with consecutive time indices in a group at most $\theta$ apart (i.e., without further choosing the maximum element from each group).  Thus, our algorithm only produces windows that localize change-points instead of returning precise estimates of change-points.  The reason for restricting our attention to such a simplification is that the additional operation of choosing the maximum element in Step 6 of the original algorithm leads to unnecessary complications that are not essential to the point of the discussion in this section.  The performance analysis of this modified algorithm follows from Theorem \ref{thm:changepoint}, and we record this result next:

\begin{corollary} \label{thm:simplechagepoint}
Under the same setup and conditions as in Theorem \ref{thm:changepoint}, suppose that we perform the modified change-point estimation algorithm -- that is, Step 6 is simplified to only return groups of times indices, where consecutive time indices in a group are at most $\theta$ apart.  Then we have with probability greater than $1 - 4n^{1-r^2}$ that $(i)$ there are exactly $\tau^\star$ groups, and $(ii)$ the $j$'th group $g_j \subset \{\theta, \dots, n-\theta\}$ is such that $|t^\star_j - \tilde{t}| \leq \theta$ for all $\tilde{t} \in g_j$.
\end{corollary}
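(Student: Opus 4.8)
The plan is to recognize that this corollary is a strict weakening of Theorem \ref{thm:changepoint}: the modified Step 6 merely returns the clusters of surviving indices, so it does not invoke the maximum-selection refinement whose accuracy was controlled by the event $E_3$. I therefore expect to need only the first two of the three events from the proof of Theorem \ref{thm:changepoint}, namely $E_1$ and $E_2$ as defined in \eqref{eq:eventse1} and \eqref{eq:eventse2}. This is precisely why the reported success probability improves from $1 - 5n^{1-r^2}$ to $1 - 4n^{1-r^2}$: we simply drop the contribution $\prob(E_3^c) \leq n^{1-r^2}$. Concretely, I would first apply Proposition \ref{thm:probboundsE1E2E3} together with the union bound to obtain $\prob(E_1 \cap E_2) \geq 1 - 4n^{1-r^2}$, and condition on $E_1 \cap E_2$ for the rest of the argument.

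Next I would translate the two events into statements about the thresholded sequence $S$. On $E_1$, every true change-point $t^\star \in \tau^\star$ satisfies $S[t^\star] \geq \gamma$, so it survives the thresholding of Step 5 as a nonzero entry. On $E_2$, every index in $\tau_{\text{far}}$ — that is, every index more than $\theta$ away from all change-points — has $S[t] < \gamma$ and is zeroed; taking the contrapositive, every surviving nonzero entry lies within $\theta$ of some change-point. Since $S[t]$ is only computed for $\theta \leq t \leq n - \theta$, all nonzero indices, and hence all returned groups, are automatically contained in $\{\theta,\dots,n-\theta\}$, which gives the containment $g_j \subset \{\theta,\dots,n-\theta\}$ for free.

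The remaining, combinatorial, step is to show these nonzero entries cluster into exactly $|\tau^\star|$ groups, one per change-point. The geometric input is the hypothesis $T_{\min}/4 \geq \theta$, so distinct change-points are at least $T_{\min} \geq 4\theta$ apart: the radius-$\theta$ windows $[t^\star_j - \theta, t^\star_j + \theta]$ about consecutive change-points are separated by a gap of at least $T_{\min} - 2\theta \geq 2\theta > \theta$ of zeros, so nonzero entries attached to different change-points can never be merged by the ``at most $\theta$ apart'' rule. The main obstacle I anticipate is the complementary direction — ruling out that a single change-point fragments into several groups separated by sub-threshold dips. I would resolve this by a short chaining argument that leans on $E_1$: the change-point $t^\star_j$ is itself nonzero, every nonzero entry in its window lies within $\theta$ of $t^\star_j$, and walking outward from $t^\star_j$ each successive nonzero entry is within $\theta$ of its predecessor precisely because the whole cluster is confined to a window of radius $\theta$ about $t^\star_j$, so no internal gap can exceed $\theta$. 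This produces a bijection between groups and change-points, yielding claim $(i)$, while the confinement of the $j$-th group to $[t^\star_j - \theta, t^\star_j + \theta]$ gives $|t^\star_j - \tilde t| \leq \theta$ for all $\tilde t \in g_j$, which is claim $(ii)$.
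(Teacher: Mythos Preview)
Your proposal is correct and matches the paper's approach: the paper does not give an explicit proof of this corollary, merely noting that it follows from Theorem~\ref{thm:changepoint}, and the relevant portion of that proof is exactly the conditioning on $E_1 \cap E_2$ (with $\prob(E_1^c)+\prob(E_2^c)\leq 4n^{1-r^2}$ from Proposition~\ref{thm:probboundsE1E2E3}) together with the grouping argument you spell out. Your treatment of the fragmentation issue is in fact more careful than the paper's one-line version; the essential point---that $t_j^\star$ itself is nonzero and every other surviving index near it lies within $\theta$ on one side of $t_j^\star$, so consecutive gaps cannot exceed $\theta$---is exactly right.
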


%slightly weaken the type of output that we require of our change-point estimation procedure: specifically, we only ask   Our algorithm in Section[REF] can be modified To obtain such an output, Step 6 of our algorithm can be simplified to only group time indices corresponding to the nonzero entries of the thresholded derivative values, with consecutive time indices in a group at most $\theta$ apart (i.e., without further choosing the maximum element from each group).  The performance analysis of such a modified algorithm follows from Theorem[REF], and we record this result next:

In order to concretely illustrate tradeoffs between the number of observations and the overall computational runtime, we focus on the following stylized change-point estimation problem.  Consider a continuous-time piecewise constant signal $\bfx^{\star}(T) \in \R^p, ~ T\in(0,1]$ defined as:
\begin{equation*}
\bfx^{\star}(T) = {\bfx^{\star}}^{(i)}, ~~~ T \in (T_i, T_{i+1}].
\end{equation*}
That is, the signal $\bfx^{\star}(T)$ takes on the value ${\bfx^{\star}}^{(i)} \in \R^p$ identically for the entire time interval $T \in (T_i,T_{i+1}]$ for $i=1,\dots,k$.  Here $i=1,\dots,k$ and the time indices $\{T_i\}_{i=1}^{k+1}$ are such that $0=T_1 \leq \dots \leq T_{k+1} = 1$.  Suppose we have two collections of noisy observations obtained by sampling the signal $\bfx^{\star}(T)$ at equally-spaced points $\frac{1}{n}$ apart and $\frac{1}{kn}$ apart for some positive integers $k > 1$ and $n$ (we assume that $n \gg \frac{1}{T_{i+1}-T_i}$ for all $i$):
\begin{eqnarray*}
\bfy^{(1)}[t] &=& \bfx^{\star}\left(\frac{t}{n}\right) + \epsilon[t], ~~~ t=1,\dots,n \\ \bfy^{(2)}[t] &=& \bfx^{\star}\left(\frac{t}{kn}\right) + \tilde{\epsilon}[t], ~~~ t=1,\dots,kn.
\end{eqnarray*}
Here $\epsilon[t], \tilde{\epsilon}[t] \sim \mathcal{N}(0,\sigma^2 I_{p \times p})$ are i.i.d. Gaussian noise vectors.  In words, $\bfy^{(2)}[t]$ is a $k$-times more rapidly sampled version of $\bfx^{\star}(T)$ than $\bfy^{(1)}[t]$.  As a result, the sequence $\bfy^{(1)}[t]$ consists of $n$ observations and the sequence $\bfy^{(2)}[t]$ consists of $kn$ observations.  Consequently, it may seem that estimating the change-points in the sequence $\bfy^{(2)}[t]$ requires at least as many computational resources as the estimation of change-points in $\bfy^{(1)}[t]$.  However, when viewed from the prism of Corollary \ref{thm:simplechagepoint} and Theorem \ref{thm:changepoint}, the sequence $\bfy^{(2)}[t]$ is in some sense more favorable than $\bfy^{(1)}[t]$ for change-point estimation -- specifically, if the minimum distance between successive change-points underlying the sequence $\bfy^{(1)}[t]$ is $T_{\min}$, then the minimum distance between successive change-points in $\bfy^{(2)}[t]$ is $k T_{\min}$, i.e., larger by a factor of $k$. (Note that $\Delta_{\min}$ for both sequences remains the same.)

Let $\mathcal{X} =\{ \bfx^{\star(1)},\ldots,\bfx^{\star (k)} \}$. Applying the modified change-point estimation algorithm described above to the sequence $\bfy^{(1)}[t]$ with parameters $\theta_1 = T_{\min} / 4, \gamma_1 = \Delta_{\min}/ 2$ and with a proximal denoising step based on a convex set $\mathcal{C}$, we obtain reliable localizations of the change-points under the condition:
\begin{equation*}
\Delta_{\min}^{2} T_{\min} \geq 64 \sigma^2 \{ \eta_{\mathcal{C}}(\mathcal{X}) + r\sqrt{ 2 \log n} \}^2,
\end{equation*}
That is, we localize the change-points to a window of size $\theta_1 = T_{\min} / 4$.  Now suppose we apply the modified change-point algorithm to the sequence $\bfy^{(2)}[t]$ (note that this sequence is of length $kn$) with parameters $\theta_2 = kT_{\min} / 4, \gamma_2 = \Delta_{\min}/ 2$ and with a proximal denoising step based on the same convex set $\mathcal{C}$.  In this case, we reliably localize each change-point in $\bfy^{(2)}[t]$ to a window of size $\theta_2 = kT_{\min} / 4$ under the following condition:
\begin{equation} \label{eq:suffcondktimes}
\Delta_{\min}^{2} (k T_{\min}) \geq 64 \sigma^2 \{ \eta_{\mathcal{C}}(\mathcal{X}) + r\sqrt{ 2 \log n + 2 \log k} \}^2,
\end{equation}
The quality of the output in both cases is the same -- identifying changes in $\bfy^{(1)}[t]$ to a resolution of $\theta_1 = T_{\min} / 4$ is comparable to identifying changes in $\bfy^{(2)}[t]$ to a resolution of $\theta_2 = kT_{\min} / 4$, because $\bfy^{(2)}[t]$ is a $k$-times more rapidly sampled version of the continuous-time signal $\bfx^{\star}(T)$ in comparison to $\bfy^{(1)}[t]$.  On the computational side, our algorithm involves roughly $n$ applications of the proximal denoiser based on the set $\mathcal{C}$ in the case of $\bfy^{(1)}[t]$, and about $kn$ applications of the same proximal denoiser in the case of $\bfy^{(2)}[t]$.  Therefore, the overall runtime is higher in the case of $\bfy^{(2)}[t]$ than in the case of $\bfy^{(1)}[t]$.

Notice that the left-hand-side of the condition \eqref{eq:suffcondktimes} goes up by a factor of $k$.  We exploit this increased gap between the two sides of the inequality in \eqref{eq:suffcondktimes} to obtain a smaller overall computational runtime for estimating changes in the sequence $\bfy^{(2)}[t]$ than for estimating changes in the sequence $\bfy^{(1)}[t]$.  The key insight underlying our approach, borrowing from the ideas developed in \cite{ChaJor:13,CRPW:12}, is that we can employ a computationally cheaper proximal denoiser when applying our algorithm to the sequence $\bfy^{(2)}[t]$.  Specifically, for many interesting classes of structured signals, one can replace the proximal denoising operation with respect to the convex set $\mathcal{C}$ in step $(3)$ of our algorithm with a proximal operator corresponding to a \emph{relaxation} $\mathcal{B} \subset \R^p$ of the set $\mathcal{C}$, i.e., $\mathcal{B}$ is a convex set such that $\mathcal{C} \subset \mathcal{B}$.  For suitable relaxations $\mathcal{B}$ of the set $\mathcal{C}$, the proximal denoiser associated to $\mathcal{B}$ is more efficient to compute than the proximal denoiser with respect to $\mathcal{C}$, and further $\eta_{\mathcal{C}}(\mathcal{X}) < \eta_{\mathcal{B}}(\mathcal{X})$.  The reason for the second property is that, under appropriate conditions, the subdifferentials with respect to the gauge functions $\|\cdot\|_{\mathcal{B}}, \|\cdot\|_{\mathcal{C}}$ satisfy the condition that $\partial \| \bfx^{\star} \|_{\mathcal{B}} \subset \partial \| \bfx^{\star} \|_{\mathcal{C}}$ at signals of interest $\bfx^{\star} \in \mathcal{X}$.  We refer the reader to \cite{ChaJor:13} for further details, and more generally, to the convex optimization literature \cite{GouPT2010,Las2001,Par2000,Ren2006,SheA1990} for various constructions of families of tractable convex relaxations.

%the increase by a factor of $k$ on the left-hand-side of the condition [REF] relative to the condition [REF] suggests that could modify our change-point estimation algorithm (based on proximal denoising with respect to $\mathcal{C}$) to obtain computational savings.  Specifically

Going back to the sequence $\bfy^{(2)}[t]$, we can employ a proximal denoiser based on \emph{any} tractable convex relaxation $\mathcal{B}$ of the set $\mathcal{C}$ as long as the following condition (a modification of \eqref{eq:suffcondktimes}) for reliable change-point estimation is satisfied:
\begin{equation*}
\Delta_{\min}^2 (k T_{\min}) \geq 64 \sigma^2 \{ \eta_{\mathcal{B}}(\mathcal{X}) + r\sqrt{ 2 \log n + 2\log k} \}^2.
\end{equation*}
Indeed, if this condition is satisfied, we can still localize changes to a resolution of $kT_{\min} / 4$, i.e., the same quality of performance as before with a proximal denoiser with respect to the set $\mathcal{C}$. However, the computational upshot is that the number of operations required to estimate change-points in $\bfy^{(2)}[t]$ using the modified proximal denoising step is roughly $kn$ applications of the proximal denoiser based on the relaxations $\mathcal{B}$.  The contrast to $n$ applications of a proximal denoiser based on $\mathcal{C}$ for estimating change-points in the sequence $\bfy^{(1)}[t]$ can be significant if computing the proximal denoiser with respect to $\mathcal{B}$ is much more tractable than computing the denoiser with respect to $\mathcal{C}$.

%by with the parameters $\theta_2 = $ expect the same quality of change-point estimation performance\footnote{For example, if we localize changes to } as if we applied our algorithm to the sequence $\bfy^{(1)}[t]$ with a proximal denoiser based on the original set $\mathcal{C}$.

We give an example in which such convex relaxations can lead to reduced computational runtime as the number of observations increases.  We refer the reader to \cite{ChaJor:13} for further illustrations in the context of statistical denoising, which can also be translated to provide interesting examples in a change-point estimation setting.  Specifically, suppose that the underlying signal set $\mathcal{X} = \{\boldsymbol a \boldsymbol a^{T} | \boldsymbol a \in \{-1,+1 \}^d \}$, i.e., the signal at each instant in time is a rank-one matrix formed as an outer product of signed vectors.  In this case, a natural candidate for a set $\mathcal{C}$ is the set of $d \times d$ \emph{correlation matrices}, which is also called the \emph{elliptope} in the convex optimization literature \cite{DezL1997}. One can show that each application of a proximal denoiser with respect to $\mathcal{C}$ requires $\mathcal{O}(d^{4.5})$ operations \cite{Hig:02}.  The $d \times d$ nuclear norm ball (scaled to contain all $d \times d$ matrices with nuclear norm at most $d$), which we denote as $\mathcal{B}$, is a relaxation of the set $\mathcal{C}$ of correlation matrices.  Interestingly, the distance $\eta_{\mathcal{B}}(\mathcal{X})$ is only a constant times larger (independent of the dimension $d$) than $\eta_{\mathcal{C}}(\mathcal{X})$ \cite{ChaJor:13}.  However, each application of a proximal denoiser with respect to $\mathcal{B}$ requires only $\mathcal{O}(d^{3})$ operations.  In summary, even if the increased sampling factor $k$ in our setup is larger than a constant (independent of the dimension $d$), one can obtain an overall reduction in computational runtime from about $\mathcal{O}(nd^{4.5})$ operations to about $\mathcal{O}(knd^{3})$ operations.

\section{Numerical Results} \label{sec:numericalresults}

We illustrate the performance of our change-point estimation algorithm with two numerical experiments on synthetic data.

%The first two experiments involve synthetic data in which we estimate change-points in sequences of sparse vectors corrupted by noise.  In the third experiment we apply our algorithm to historical financial data.

%The experiment involving orthogonal matrices is solved using the SDPT3 package \cite{sdpt3:1,sdpt3:2} and the CVX parser \cite{cvx}.

%\begin{figure}
%\centering
%\includegraphics[width=0.8\textwidth]{images/PlantedSparse}
%\caption{Experiment contrasting our algorithm (in blue) with the filtered derivative approach (in red): the left sub-plot corresponds to a small-sized change while the right sub-plot corresponds to a large-sized change.}
%\label{fig:PlantedSparse}
%\end{figure}

\begin{figure}
\centering
\includegraphics[width=0.8\textwidth]{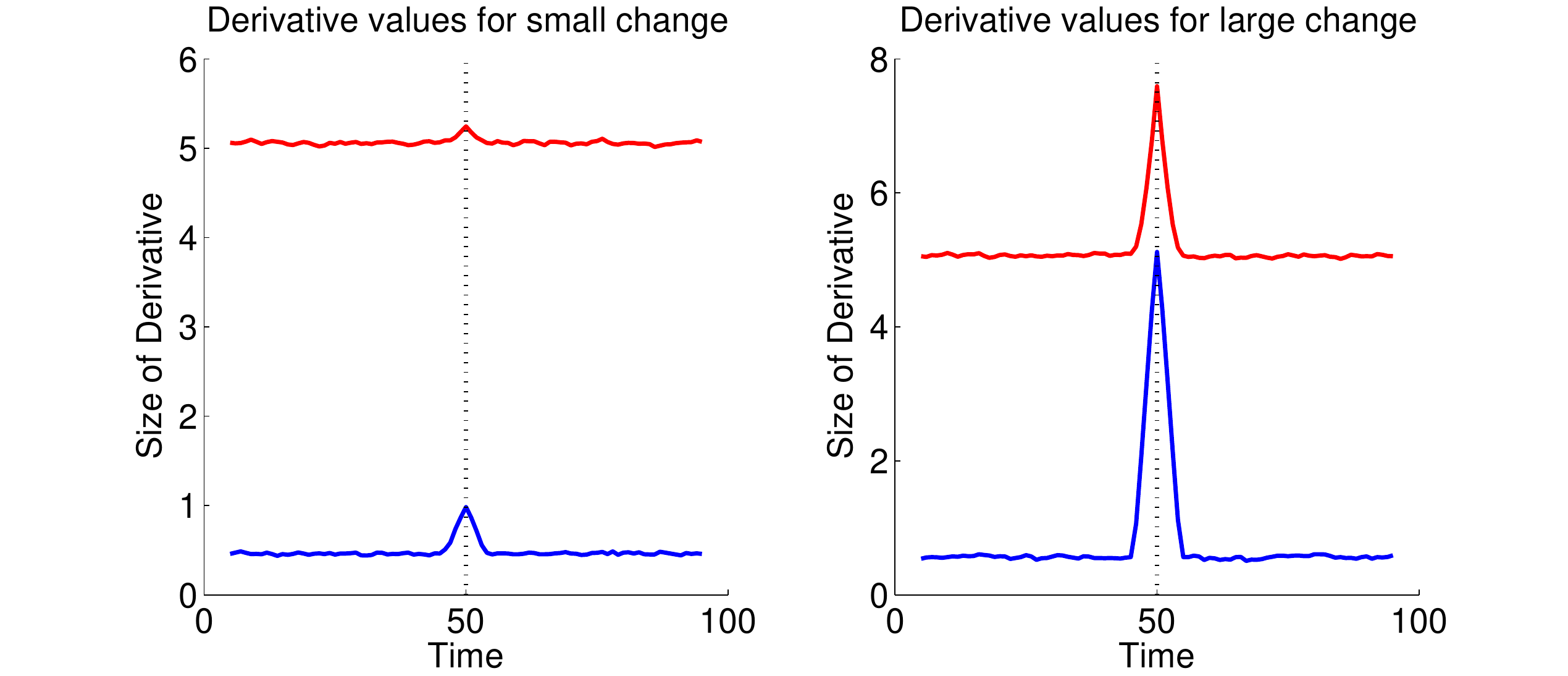}
\caption{Experiment contrasting our algorithm (in blue) with the filtered derivative approach (in red): the left sub-plot corresponds to a small-sized change and the right sub-plot corresponds to a large-sized change.}
\label{fig:PlantedLRcombine}
\end{figure}

%\begin{figure}
%\centering
%\includegraphics[width=0.45\textwidth]{images/PlantedLRSmall}
%\caption{Experiment contrasting our algorithm (in blue) with the filtered derivative approach (in red) corresponding to a small-sized change.}
%\label{fig:PlantedLRSmall}
%\end{figure}

%\begin{figure}
%\centering
%\includegraphics[width=0.45\textwidth]{images/PlantedLRLarge}
%\caption{Experiment contrasting our algorithm (in blue) with the filtered derivative approach (in red) corresponding to a large-sized change.}
%\label{fig:PlantedLRLarge}
%\end{figure}

\textbf{A contrast between our approach and the filtered derivative.}  The objective of the first experiment is to demonstrate the improved performance of our algorithm from Section \ref{sec:algorithm} in comparison to the classical filtered derivative approach in a stylized problem setup.  Recall that the filtered derivative method is equivalent to omitting the proximal denoising step in our algorithm, i.e., $\hat{\bfx}[t] = \bar{\bfy}[t]$ in Step $3$ of our algorithm.

%We consider a signal sequence $\bfx^{\star}[t] \in \mathbb{R}^{1000}, t = 1,\ldots,40,$ consisting of exactly one change-point at time $t=20$.  Let $\mathbf{S}^{(1)}, \mathbf{S}^{(2)} \in \R^{1000}$ be sparse vectors, with each vector consisting of a single nonzero entry (at a randomly chosen location) that is equal to $1$.  The signal $\bfx^{\star}[t]$ is equal to $\mathbf{S}^{(1)}$ before the change-point and $\mathbf{S}^{(2)}$ after the change-point, and the observations are $\bfy[t] = \bfx^{\star}[t] + \bfe[t], t=1,\ldots,40$, where each $\bfe[t] \sim \mathcal{N}(0,\sigma^2 I_{1000\times 1000})$ with $\sigma = 0.3$.  Given this sequence of observations, we apply our algorithm with parameters $\lambda = 0.4$ and $\theta = 5$ (and with proximal denoising based on the $\ell_1$-norm), and the filtered derivative algorithm with $\theta = 5$.  The corresponding derivative values from our algorithm and the filtered derivative algorithm are given in the the left sub-plot of Figure~\ref{fig:PlantedSparse}.  We repeat the same experiment with the modification that the nonzero entries in $\mathbf{S}^{(1)},\mathbf{S}^{(2)}$ are now equal to $5$, thus leading to a larger-sized change relative to the noise.  The corresponding derivative values from our algorithm and the filtered derivative algorithm are given in the right sub-plot in Figure~\ref{fig:PlantedSparse}.

We consider a signal sequence $\bfx^{\star}[t] \in \mathbb{R}^{200 \times 200}, t = 1,\ldots,100,$ consisting of exactly one change-point at time $t=50$. Let $\textbf{u}^{(1)},\textbf{u}^{(2)},\textbf{v}^{(1)},\textbf{v}^{(2)} \in \R^{200}$  be vectors with unit Euclidean-norm and direction chosen uniformly at random and independently. The signal $\bfx^{\star}[t]$ is a $200 \times 200$ matrix equal to $\textbf{u}^{(1)}\textbf{v}^{(1)\prime}$ before the change-point and $\textbf{u}^{(2)}\textbf{v}^{(2)\prime}$ after the change-point, and the observations are $\bfy[t] = \bfx^{\star}[t] + \bfe[t], t=1,\ldots,100$, where $\bfe[t] \sim \mathcal{N}(0,\sigma^2 I_{{200^2 \times 200^2}})$ with $\sigma = 0.04$.  Given this sequence of observations, we apply our algorithm with parameters $\lambda = 0.4$ and $\theta = 5$ (and with proximal denoising based on the nuclear-norm), and the filtered derivative algorithm with $\theta = 5$.  The corresponding derivative values from our algorithm and the filtered derivative algorithm are given in the left sub-plot of Figure~\ref{fig:PlantedLRcombine}.  We repeat the same experiment with the modification that the vectors $\textbf{u}^{(1)},\textbf{u}^{(2)},\textbf{v}^{(1)},\textbf{v}^{(2)}$ now have Euclidean-norm equal to $2$, thus leading to a larger-sized change relative to the noise.  The corresponding derivative values from our algorithm and the filtered derivative algorithm are given in the right sub-plot in Figure~\ref{fig:PlantedLRcombine}.

%The derivative values for the filtered derivative algorithm appears to be mostly noise and does not suggest a change-point at $t=20$. In contrast, the derivative values from our algorithm display a visible peak at $t=20$. The experiment is specifically designed so that the size of the change-point is small relative to the noise. In such settings, the incorporation of a denoising step is beneficial because it allows a change-point estimation algorithm to distinguish a genuine change-point from noise with greater confidence.  In this run, we observe that both sets of derivative values display a prominent peak at $t=20$, which suggests the role of denoising is less significant for large-sized changes.

One observation is that the derivative values are generally larger with the filtered derivative algorithm than with our approach; this is primarily due to the lack of a denoising step, as a larger amount of noise leads to greater derivative values.  More crucially, however, the \emph{relative difference} in the derivative values near a change-point and away from a change-point is much larger with our algorithm than with the filtered derivative method.  This is also a consequence of the inclusion of the proximal denoising step in our algorithm and the lack of a similar denoising operation in the filtered derivative approach.  By suppressing the impact of noise via proximal denoising, our approach identifies smaller-sized changes more reliably than a standard filtered derivative method without a denoising step (see the sub-plot on the left in Figure~\ref{fig:PlantedLRcombine}).

%We apply our algorithm and the filtered derivative algorithm to the sequence of observations and compare the derivative values. Here, 
%
%In the first run, we set all non-zero entries of $\bfx^{\star}[t]$ to be $1$.  

\textbf{Estimating change-points in sequences of sparse vectors.} In our second experiment, we investigate the variation in the performance of our algorithm by choosing different sets of parameters $\theta,\lambda,\gamma$.  We consider a signal sequence $\bfx^\star[t] \in \R^{1000}, ~ t=1,\dots,1000$ consisting of sparse vectors.  Specifically, we begin by generating $10$ sparse vectors $\mathbf{S}^{(k)} \in \R^{1000}, ~ k=1,\dots,10$ as follows: for each $k=1,\dots,10$, the vector $\mathbf{S}^{(k)}$ is a random sparse vector consisting of $30$ nonzero entries (the locations of these entries are chosen uniformly at random and independently of $k$), with each nonzero entry being set to $1.2^{k-1}$.  We obtain the signal sequence $\bfx^\star[t]$ from the $\mathbf{S}^{(k)}$'s by setting $\bfx^\star[t] = \mathbf{S}^{(k)}$ for $t \in \{100(k-1)+1,100k\}$.  In words, the signal sequence $\bfx^\star[t]$ consists of $10$ equally-sized blocks of length $100$, and within each block the signal is identically equal to a sparse vector consisting of $30$ nonzero entries.  The magnitudes of the nonzero entries of $\bfx^\star[t]$ in the latter blocks are larger than those in the earlier blocks.  The observations are $\bfy[t] = \bfx^\star[t] + \bfe[t], ~ t=1,\dots,1000$, where each $\bfe[t] \sim \mathcal{N}(0,\sigma^2 I_{1000\times 1000})$ with $\sigma$ chosen to be $2.5$.  We then apply our proposed algorithm using the four choices of parameters listed in Figure~\ref{fig:sv_tableparam}, with a proximal operator based on the $\ell_1$-norm. The estimated sets of change-points are given in Figure~\ref{fig:sv_cpestimates}, and the derivative values corresponding to Step 4 of our algorithm are given in Figure~\ref{fig:sv_antd}.%\footnote{The atomic norm thresholded derivatives refer to the set of values obtained after Step 4 of the proposed algorithm.}

\begin{figure}
\centering
\begin{tabular}{| c | c c c| }
\hline
Run & $\theta$ & $\lambda$ & $\gamma$ \\
\hline
%1 & 1 & 120 & 5 \\
1 & 10 & 1 & 15 \\
2 & 10 & 2 & 9 \\
3 & 30 & 1 & 8 \\
4 & 30 & 2 & 6 \\
%6 & 50 & 10 & 1 \\
\hline
\end{tabular}
\caption{Table of parameters employed in our change-point estimation algorithm in synthetic experiment with sparse vectors.}
\label{fig:sv_tableparam}
\end{figure}

\begin{figure}
\centering
\includegraphics[width=0.5\textwidth]{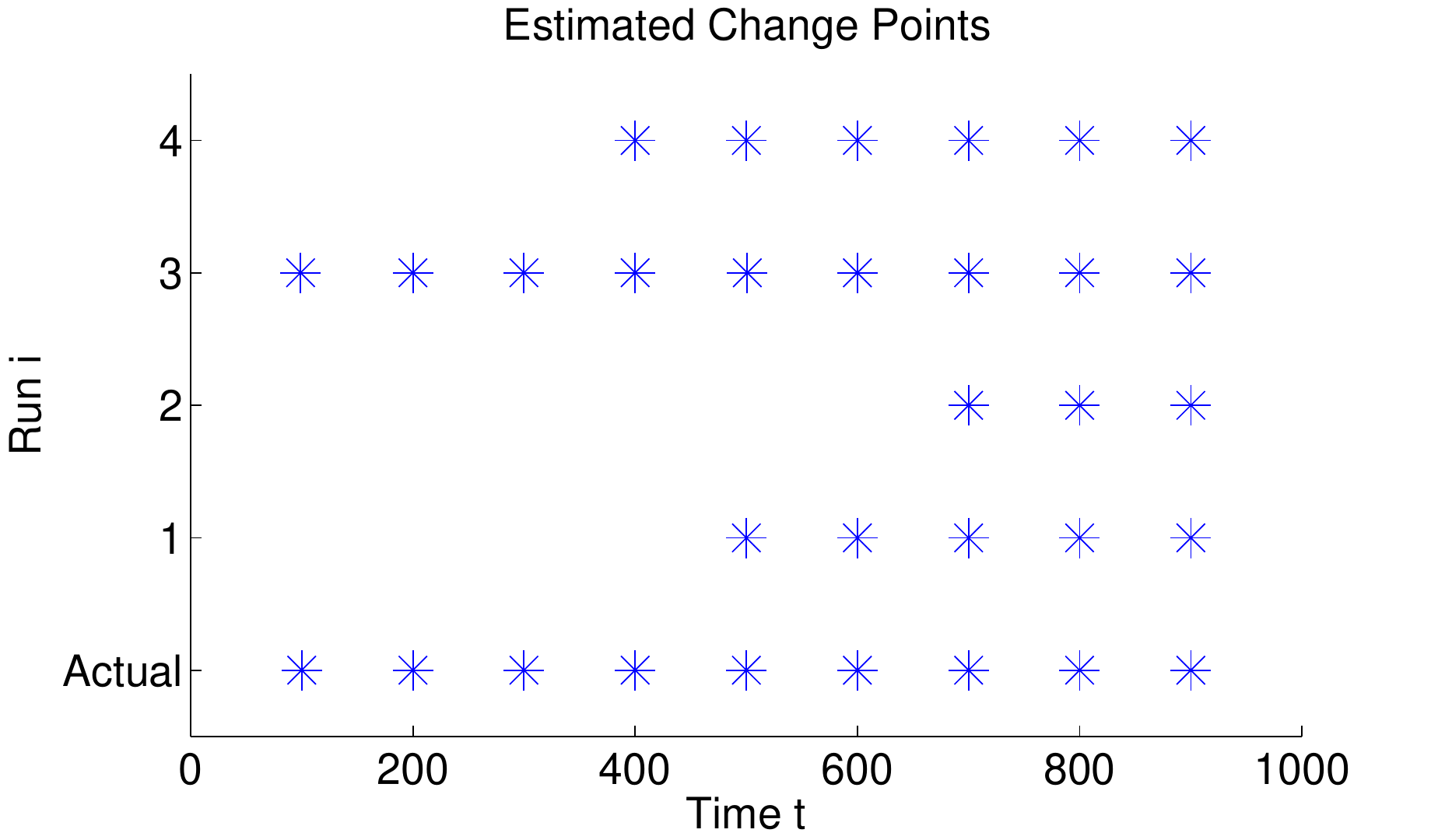}
\caption{Plot of estimated change-points: the locations of the actual change-points are indicated in the bottom row.}
\label{fig:sv_cpestimates}
\end{figure}

\begin{figure}
\centering
\includegraphics[width=0.5\textwidth]{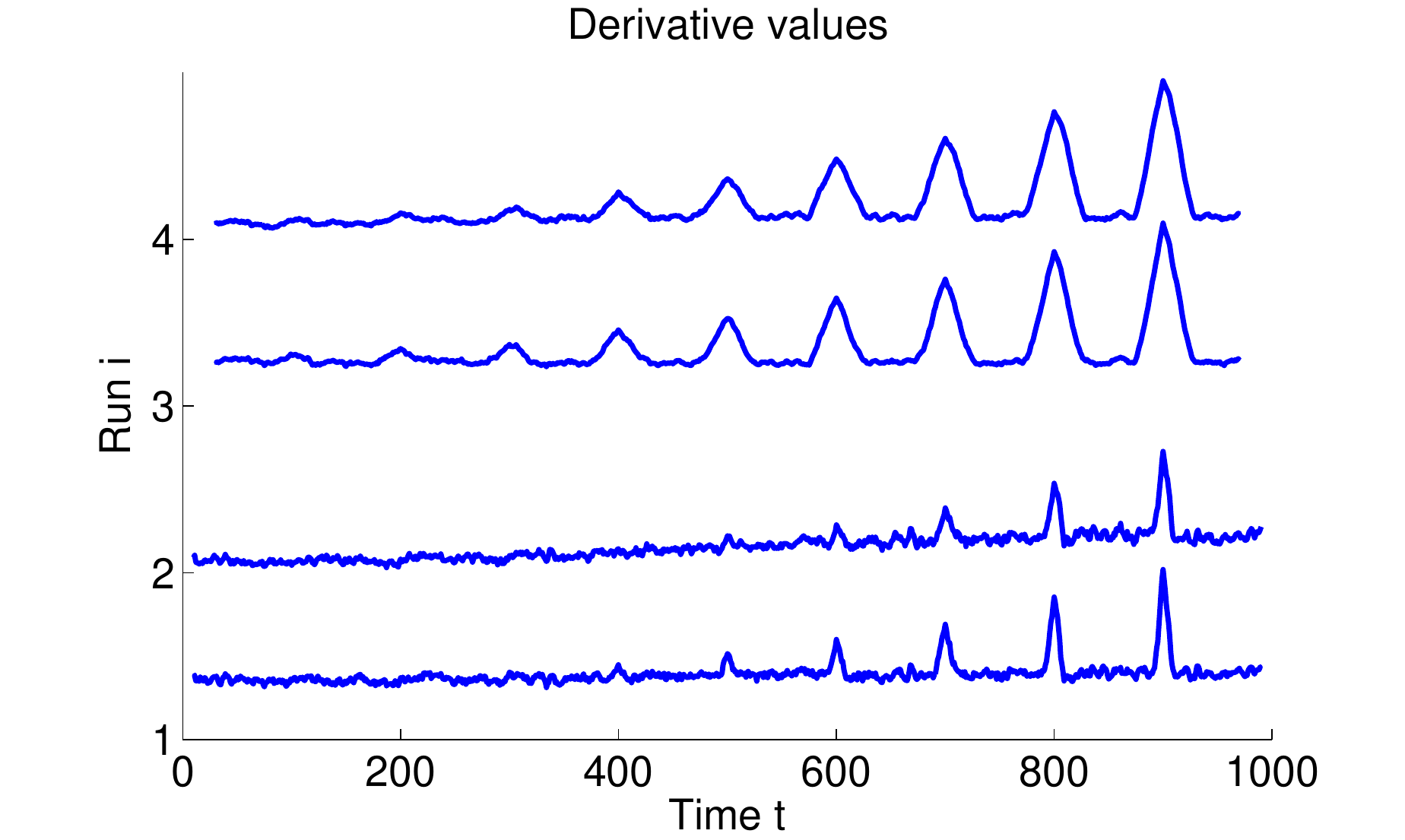}
\caption{Experiment with sparse vectors: graphs of derivative values corresponding to different parameters choices from Figure~\ref{fig:sv_tableparam}.}
\label{fig:sv_antd}
\end{figure}

%\begin{figure}[!http]
%  \centering
%  \subfigure[GM]{
%  	\includegraphics[width=.32\textwidth]{S_pic} 	
%  }
%  \subfigure[LVGM]{
%  	\includegraphics[width=.32\textwidth]{SL_pic} 	
%  }
%  \subfigure[SDR-GM]{
%  	\includegraphics[width=.32\textwidth]{AS_pic} 	
%  }
%  \subfigure[SDR-LVGM]{
%  	\includegraphics[width=.32\textwidth]{ASL_pic} 	
%  }
%    \caption{Stock returns of 67 companies from S$\&$P 100: These figures show the sparsity pattern (black denotes an edge, and white denotes no edge) of the concentration matrix of a graphical models corresponding to the following modeling paradigms: sparse graphical modeling, latent-variable graphical modeling, SDR with sparse graphical modeling, and SDR with latent-variable graphical modeling. The manner in which these results are obtained is discussed in detail in section 4.1.}
%\label{fig:graphicalS}

First, note that the algorithm generally detects smaller sized changes with larger values of $\theta$ and smaller values of $\gamma$ (corresponding to Runs $3$ and $4$ from Figure~\ref{fig:sv_tableparam}), i.e., the averaging window size is larger in Step $2$ of our algorithm and the threshold is smaller in Step $4$.  Next, consider the graph of derivative values in Figure~\ref{fig:sv_antd}.  The estimated locations of change-points correspond to peaks in Figure~\ref{fig:sv_antd}, so the algorithm can be interpreted as selecting a subset of peaks that are sufficiently separated (Step $6$).  We note that a smaller choice of $\theta$ leads to sharper peaks (and hence, smaller-sized groups in Step $6$), while a larger choice of $\theta$ leads to wider peaks (correspondingly, larger-size groups in Step $6$).

\section{Conclusions} \label{sec:conclusions}

We propose an algorithm for high-dimensional change-point estimation that blends the filtered derivative method with a convex optimization step that exploits low-dimensional structure in the underlying signal sequence.  We prove that our algorithm reliably estimates change-points provided the product of the square of the size of the smallest change (measured in $\ell_2$-norm) and the smallest distance between changes is larger than Gaussian distance/width quantity $\eta^2$, which characterizes the low-dimensional complexity in the signal sequence.  The dependence on $\eta^2$ is a result of the integration of the convex optimization step (based on proximal denoising).

The change-point literature also consists of extensive investigations of \emph{quickest change detection} problems \cite{Lor:71,Pag:54,PooHad:08,Shi:63,VeeBan:12}, which are qualitatively somewhat different than the setup considered in our work.  In those settings one is given access to observations sequentially, and the objective is to correctly declare when a change-point occurs in the shortest time possible (i.e., minimize the expected delay) subject to false alarm rate constraints.  Building on the algorithmic ideas described in this paper, it would be of interest to design computationally and statistically efficient techniques for high-dimensional quickest change detection problems by exploiting structure in the underlying signal sequence.

%%In this paper we propose and analyze an algorithm for change-point estimation that is suitable for signals in the high dimensional setting. The signal structure is based on concise representations with respect to a set of simple signals. Based on the main theoretical guarantee, we discuss tradeoffs that may arise in change-point estimation and how it can be used to overcome certain difficulties.
%
%We highlight a number of interesting directions that arise from this work.
%
%Our paper focuses on change-point estimation in the offline setting. The online analog of change-point estimation in the high-dimensional setting represents an important area that is less understood. In many instances, the accuracy of a quickest change procedure can be measured using metrics such as the expected false alarm rate and the expected delay in declaring a true change. While we briefly mention in Section \ref{sec:mainresults} that the proposed change-point estimation algorithm can be suitably modified to obtain a quickest change procedure, the exact effects of, say, an insertion of a denoising operation into a quickest change procedure on the false alarm rates as well as the expected delays are unknown.
%%Specifically, the accuracy of the quickest change procedure can be measured using a number of metrics, such as the expected false alarm rate and the expected delay in declaring a true change.

\section*{Acknowledgments} \label{sec:acknowledgments} This work was supported in part by the following sources: National Science Foundation Career award CCF-1350590, Air Force Office of Scientific Research grant FA9550-14-1-0098, an Okawa Research Grant in Information and Telecommunications, and an A*STAR (Agency for Science, Technology and Research, Singapore) Fellowship.  Yong Sheng Soh would like to thank Michael McCoy for useful discussions, and Atul Ingle for pointing out a typographical error in a preliminary version of this paper.

%%% Bibliography %%%%%%%%%%%%%%%%%%%%%%%%%%%%%%%%%%%%%%%%%%%%%

%\textbf{*** FIX REFERENCES -- SOME FIRST NAMES ARE JUST WITH INITIALS, WHILE OTHERS ARE NOT; SOME CITATIONS ARE ``INFORMATION THEORY, IEEE TRANSACTIONS ON'' WHILE OTHERS ARE ``IEEE JOURNAL OF SELECTED TOPICS IN SIGNAL PROCESSING'' ***}

\bibliography{cpd}

%%% Appendix %%%%%%%%%%%%%%%%%%%%%%%%%%%%%%%%%%%%%%%%%%%%%%

\section{Appendix}

The Appendix is divided as follows.  In Section \ref{sec:gaussiandistwidth} we describe the relation between the Gaussian distance and the Gaussian width.  Next, in Section \ref{sec:prep} we analyze the denoising properties of proximal operators.  Finally, in Section \ref{sec:proofmainresult} we prove the main results (from Section \ref{sec:mainresults}) of this paper. As described at the end of Section \ref{sec:minkowskifunc}, we reiterate that the assumption that the set $\mathrm{conv}(\mathcal{A})\subset \mathbb{R}^p$ contains the origin in its interior holds throughout the Appendix.

%%%%%%%%%%% relation between Gaussian distance and Gaussian width %%%%%%%%%%

\subsection{Relationship between Gaussian distance and Gaussian width} \label{sec:gaussiandistwidth}

The \emph{Gaussian width} of a set $S \subseteq \mathbb{R}^p$ is defined as \cite{Gor:88}:
\begin{equation*}
\omega(S) := \expc_{\mathbf{g}\sim \mathcal{N}(\mathbf{0},I_{p \times p})} \biggl[ \sup_{\mathbf{z}\in S} \mathbf{g}^{T} \mathbf{z} \biggr].
\end{equation*}

The next definition that we need in order to relate the Gaussian distance and the Gaussian width is the \emph{skewness} $\kappa_{\mathcal{C}}(\bfx)$ of a norm $\|\cdot \|_{\mathcal{C}}$ at a point $\bfx$:
\begin{equation*}
\kappa_{\mathcal{C}}(\bfx) :=\frac{ \|\Pi_{\partial \| \bfx \|_{\mathcal{C}}} (\mathbf{0})\|_{2} }{ \|\Pi_{\text{aff.hull.}(\partial \| \bfx \|_{\mathcal{C}})} (\mathbf{0}) \|_{2}},
\end{equation*}
where $\Pi$ denotes the Euclidean projection and $\text{aff.hull.}$ denotes the affine hull. The quantity $\kappa$ has a natural geometric interpretation: since the subdifferential $\partial \| \bfx \|_{\mathcal{C}}$ corresponds to a face of the dual norm ball $\mathcal{C}^{*} = \{ \bfx: \| \bfx\|_{\mathcal{C}}^{*} \leq 1\}$, the parameter $\kappa_{\mathcal{C}}(\bfx)$ measures the skewness of the face $\partial \| \bfx \|_{\mathcal{C}}$. It is clear from this interpretation that $\kappa_{\mathcal{C}}(\bfx)=1$ for all $\bfx \in \R^p$ whenever the unit ball with respect to the dual norm is suitably symmetric.  Examples of such convex sets include the $\ell_{1}$-norm ball, the nuclear-norm ball, the $\ell_{\infty}$-norm ball and the spectral-norm ball.

Our final definition relates to yet another convex-geometric concept.  The tangent cone $T_{\mathcal{C}}(\bfx)$ at a point $\bfx \in \R^p$  with respect to the unit ball of the $\|\cdot \|_{\mathcal{C}}$-norm (i.e., the convex set $\mathcal{C}$) when $\|\bfx\|_{\mathcal{C}} = 1$ is:
\begin{equation*}
T_{\mathcal{C}}(\bfx) := \mathrm{cone} \{ \mathbf{Z} - \bfx : \mathbf{Z} \in \mathbb{R}^p, \| \mathbf{Z}\|_{\mathcal{C}} \leq \| \bfx\|_{\mathcal{C}} \}.
\end{equation*}
For general unnormalized nonzero points $\bfx \in \R^p$, the tangent cone with respect to $\mathcal{C}$ is $T_{\mathcal{C}}(\bfx / \|\bfx\|_{\mathcal{C}})$.

The next proposition relates the Gaussian distance and the Gaussian width.  The result relaxes a ``weak decomposability'' assumption in \cite[Prop.1]{FoyMac:13}. We denote the Euclidean sphere in $\R^p$ by $\mathbb{S}^{p-1}$.

\begin{proposition} \label{thm:etabound}
The Gaussian distance is bounded above by the Gaussian width as follows:
\begin{equation*}
\eta_{\mathcal{C}}({\bfx}) \leq \omega(T_{\mathcal{C}}(\bfx) \cap \mathbb{S}^{p-1})+ 3\kappa_{\mathcal{C}}(\bfx) + 4.
\end{equation*}
%
%[OR]
%
%Let $\bfe \sim \mathcal{N} (0, \sigma^2 I_{p\times p})$. Let $T$ be the tangent cone of $\| \cdot \|_{\mathcal{C}}$ at $\bfx$ \cite{Roc:70}. Then
%\begin{equation*}
%\eta(\bfx):=\inf_{\lambda \geq 0} \mathbb{E}\bigl[ \mathrm{dist} (\bfe,\lambda \cdot \| \bfx \|_{\mathcal{C}}) \bigr] \leq  \sigma \{ \omega(T \cap \mathbb{S}^{p-1}) + 3 \kappa + 4 \}.
%\end{equation*}
\end{proposition}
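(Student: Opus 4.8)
The plan is to relate the Gaussian distance $\eta_{\mathcal{C}}(\bfx) = \inf_{\lambda \geq 0} \expc [\mathrm{dist}(\mathbf{g}, \lambda \cdot \partial\|\bfx\|_{\mathcal{C}})]$ to the Gaussian width of the tangent cone by choosing a near-optimal value of the scaling parameter $\lambda$ and bounding the resulting distance geometrically. The key observation is that $\partial\|\bfx\|_{\mathcal{C}}$ is a face of the dual norm ball, and the tangent cone $T_{\mathcal{C}}(\bfx)$ is the polar of the cone generated by this subdifferential (this is a standard fact from convex analysis; see \cite{CRPW:12}). Thus the distance from a Gaussian vector $\mathbf{g}$ to the scaled subdifferential $\lambda \cdot \partial\|\bfx\|_{\mathcal{C}}$ is closely tied, via Moreau-type projection identities, to the distance from $\mathbf{g}$ to the polar cone $T_{\mathcal{C}}(\bfx)^{\circ}$, and the latter distance is controlled by the Gaussian width of $T_{\mathcal{C}}(\bfx) \cap \mathbb{S}^{p-1}$.

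First I would fix the scaling $\lambda$ in the definition of $\eta_{\mathcal{C}}(\bfx)$. Since $\eta_{\mathcal{C}}(\bfx)$ is an infimum over $\lambda \geq 0$, it suffices to exhibit one good choice. The natural choice is the $\lambda$ for which $\lambda \cdot \partial\|\bfx\|_{\mathcal{C}}$ lies on the boundary of (or is aligned with) the normal cone $T_{\mathcal{C}}(\bfx)^{\circ} = \mathrm{cone}(\partial\|\bfx\|_{\mathcal{C}})$. Second, I would decompose the distance to $\lambda \cdot \partial\|\bfx\|_{\mathcal{C}}$ into two pieces: the distance from $\mathbf{g}$ to the cone $\mathrm{cone}(\partial\|\bfx\|_{\mathcal{C}}) = T_{\mathcal{C}}(\bfx)^{\circ}$, plus a correction term measuring how far the projection onto the cone is from actually landing on the scaled subdifferential $\lambda \cdot \partial\|\bfx\|_{\mathcal{C}}$ (i.e., on the correct ``slice'' of the cone at radius governed by $\lambda$). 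The first piece, $\expc[\mathrm{dist}(\mathbf{g}, T_{\mathcal{C}}(\bfx)^{\circ})]$, equals $\omega(T_{\mathcal{C}}(\bfx) \cap \mathbb{S}^{p-1})$ up to lower-order terms by the polarity duality between Gaussian distance to a cone and Gaussian width of its polar intersected with the sphere (a refinement of \cite[Prop.~1]{FoyMac:13} and the results of \cite{Gor:88,OymHas:12}). The second piece is where the skewness $\kappa_{\mathcal{C}}(\bfx)$ enters: the correction term is controlled by how ``tilted'' the affine hull of the subdifferential is relative to the origin, and bounding this contribution by a constant multiple of $\kappa_{\mathcal{C}}(\bfx)$ (here the factor $3$) is the quantitative content that relaxes the weak-decomposability hypothesis of \cite{FoyMac:13}.

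The hard part will be controlling the second (correction) term without the decomposability assumption. The inequality $\partial\|\bfx\|_{\mathcal{C}} \subseteq \mathrm{cone}(\partial\|\bfx\|_{\mathcal{C}})$ is trivial, but the projection of $\mathbf{g}$ onto the cone need not land on the scaled face $\lambda \cdot \partial\|\bfx\|_{\mathcal{C}}$; the gap between these is exactly what the skewness measures. Specifically, I expect to write $\Pi_{\partial\|\bfx\|_{\mathcal{C}}}(\mathbf{0})$ and $\Pi_{\text{aff.hull}(\partial\|\bfx\|_{\mathcal{C}})}(\mathbf{0})$ to parametrize the geometry of the face, then use the definition $\kappa_{\mathcal{C}}(\bfx) = \|\Pi_{\partial\|\bfx\|_{\mathcal{C}}}(\mathbf{0})\|_2 / \|\Pi_{\text{aff.hull}(\partial\|\bfx\|_{\mathcal{C}})}(\mathbf{0})\|_2$ to convert a bound involving the offset of the affine hull into a bound involving $\kappa_{\mathcal{C}}(\bfx)$. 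The additive constant $4$ should emerge from concentration of $\|\mathbf{g}\|_2$ and from absorbing the one-dimensional fluctuation of $\mathbf{g}$ along the direction $\Pi_{\text{aff.hull}(\partial\|\bfx\|_{\mathcal{C}})}(\mathbf{0})$ into a universal constant via a Gaussian tail/expectation estimate. Assembling the two pieces and taking the expectation over $\mathbf{g}$ then yields the stated bound $\eta_{\mathcal{C}}(\bfx) \leq \omega(T_{\mathcal{C}}(\bfx) \cap \mathbb{S}^{p-1}) + 3\kappa_{\mathcal{C}}(\bfx) + 4$.
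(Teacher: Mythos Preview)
Your high-level strategy matches the paper's: fix a deterministic $\lambda$, bound $\mathrm{dist}(\mathbf{g},\lambda\,\partial\|\bfx\|_{\mathcal{C}})$ by $\mathrm{dist}(\mathbf{g},N)$ plus a correction, where $N=\bigcup_{\lambda\ge 0}\lambda\,\partial\|\bfx\|_{\mathcal{C}}=T_{\mathcal{C}}(\bfx)^{\circ}$, handle the first term via the polarity relation $\{\expc[\mathrm{dist}^2(\mathbf{g},N)]\}^{1/2}\le \omega(T_{\mathcal{C}}(\bfx)\cap\mathbb{S}^{p-1})+1$ from \cite{ALMT:13}, and control the correction through $\kappa_{\mathcal{C}}(\bfx)$ and the direction $\mathbf{w}_0=\Pi_{\text{aff.hull}(\partial\|\bfx\|_{\mathcal{C}})}(\mathbf{0})$. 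That is exactly the skeleton of the paper's argument.

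Where your proposal is incomplete is precisely the part you flag as ``the hard part.'' You say the correction is ``controlled by how tilted the affine hull is'' and that the constant $4$ comes from ``one-dimensional fluctuation of $\mathbf{g}$ along $\mathbf{w}_0$,'' but you do not give a mechanism for choosing $\lambda$ or for bounding the correction in expectation. The paper supplies this in two concrete steps you have not identified. First, it proves a separate lemma that the \emph{random} optimal scaling $\lambda^{\star}(\mathbf{g}):=\argmin_{\lambda\ge 0}\mathrm{dist}(\mathbf{g},\lambda\,\partial\|\bfx\|_{\mathcal{C}})$ is $1/\|\mathbf{w}_0\|_2$-Lipschitz in $\mathbf{g}$; this is nontrivial and uses nonexpansiveness of projection onto $N$ together with the orthogonality $\langle \mathbf{w}-\mathbf{w}_0,\mathbf{w}_0\rangle=0$ for $\mathbf{w}\in\partial\|\bfx\|_{\mathcal{C}}$. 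Gaussian concentration then gives $|\lambda^{\star}(\mathbf{g})-\expc[\lambda^{\star}]|\le c$ with high probability, and the paper takes the deterministic choice $\lambda=\expc[\lambda^{\star}]+c$. On that event, a convex combination of $\mathbf{w}_{\mathbf{g}}$ (the minimizer in $\partial\|\bfx\|_{\mathcal{C}}$) and $\mathbf{w}_1=\Pi_{\partial\|\bfx\|_{\mathcal{C}}}(\mathbf{0})$ lies in $\partial\|\bfx\|_{\mathcal{C}}$, yielding the pointwise bound $\mathrm{dist}(\mathbf{g},\lambda\,\partial\|\bfx\|_{\mathcal{C}})-\mathrm{dist}(\mathbf{g},N)\le 2c\,\kappa_{\mathcal{C}}(\bfx)\|\mathbf{w}_0\|_2$. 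Second, because this bound holds only with high probability and $\eta_{\mathcal{C}}$ is defined via an expectation, the paper applies Gaussian concentration a \emph{second} time to the $2$-Lipschitz function $m(\mathbf{g}):=\mathrm{dist}(\mathbf{g},\lambda\,\partial\|\bfx\|_{\mathcal{C}})-\mathrm{dist}(\mathbf{g},N)$ to upgrade the high-probability bound to a bound on $\expc[m]$. The constants $3$ and $4$ then fall out of the specific choice $c=1.5/\|\mathbf{w}_0\|_2$. Your outline does not supply either the Lipschitz lemma for $\lambda^{\star}$ or the second concentration step, and without them there is no way to pass from your geometric picture to the stated expectation inequality.
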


Proposition \ref{thm:etabound} is useful because it relates Theorem \ref{thm:changepoint} and Proposition \ref{thm:squarederror} with previously computed bounds on Gaussian widths \cite{CRPW:12,FoyMac:13}.

The proof of Proposition \ref{thm:etabound} requires two short lemmas.

\begin{lemma} \label{lemma:w0exists}
Suppose $\bfx \neq 0$. Define $\mathcal{H}$ to be the affine hull of $\partial \| \bfx \|_{\mathcal{C}}$ and $\mathbf{w}_0 := \Pi_{\mathcal{H}}(\mathbf{0})$. Then $\langle \mathbf{w} - \mathbf{w}_0, \mathbf{w}_0 \rangle = 0$ for all $\mathbf{w} \in \partial \| \bfx \|_{\mathcal{C}}$ and $\| \mathbf{w}_0 \|_{2} > 0$.
\end{lemma}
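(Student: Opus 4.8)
The plan is to treat the two assertions of the lemma separately, since each rests on a different elementary fact. The orthogonality relation $\langle \mathbf{w} - \mathbf{w}_0, \mathbf{w}_0 \rangle = 0$ is purely a consequence of $\mathbf{w}_0$ being the Euclidean projection of the origin onto the affine set $\mathcal{H}$, whereas the strict positivity $\| \mathbf{w}_0 \|_2 > 0$ exploits the specific structure of the subdifferential of a gauge at a nonzero point, namely that it is separated from the origin by a hyperplane.

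For the first assertion, I would invoke the standard variational characterization of projection onto an affine subspace. Writing $\mathcal{H} = \mathbf{w}_0 + V$, where $V$ is the linear direction space of $\mathcal{H}$, the defining property of $\mathbf{w}_0 = \Pi_{\mathcal{H}}(\mathbf{0})$ is that the residual $\mathbf{0} - \mathbf{w}_0$ is orthogonal to $V$. Since $\partial \| \bfx \|_{\mathcal{C}} \subseteq \mathcal{H}$ by the definition of the affine hull, every $\mathbf{w} \in \partial \| \bfx \|_{\mathcal{C}}$ satisfies $\mathbf{w} - \mathbf{w}_0 \in V$, and therefore $\langle \mathbf{w} - \mathbf{w}_0, -\mathbf{w}_0 \rangle = 0$, which is exactly the claimed identity. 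This step is immediate and requires only that $\partial \| \bfx \|_{\mathcal{C}}$ be nonempty, which holds because $\| \cdot \|_{\mathcal{C}}$ is a finite convex function on all of $\mathbb{R}^p$ (the origin lies in the interior of $\mathcal{C}$) and hence is subdifferentiable everywhere.

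For the second assertion, the key observation is that $\| \cdot \|_{\mathcal{C}}$ is positively homogeneous of degree one, so Euler's identity for gauges gives $\langle \mathbf{w}, \bfx \rangle = \| \bfx \|_{\mathcal{C}}$ for every $\mathbf{w} \in \partial \| \bfx \|_{\mathcal{C}}$; I would record this in one line by applying the subgradient inequality at the points $2\bfx$ and $\mathbf{0}$. Consequently $\partial \| \bfx \|_{\mathcal{C}}$ lies entirely in the affine hyperplane $\{ \mathbf{w} : \langle \mathbf{w}, \bfx \rangle = \| \bfx \|_{\mathcal{C}} \}$, and since this hyperplane is itself affine, the affine hull $\mathcal{H}$ is contained in it as well. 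Because $\bfx \neq \mathbf{0}$ and $\mathcal{C}$ is a bounded set with the origin in its interior, we have $\| \bfx \|_{\mathcal{C}} > 0$; hence the origin violates the defining equation of the hyperplane, as $\langle \mathbf{0}, \bfx \rangle = 0 \neq \| \bfx \|_{\mathcal{C}}$, and so $\mathbf{0} \notin \mathcal{H}$. A projection leaves a point fixed only if the point already lies in the set, so $\mathbf{w}_0 = \Pi_{\mathcal{H}}(\mathbf{0}) \neq \mathbf{0}$, giving $\| \mathbf{w}_0 \|_2 > 0$.

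The bulk of the argument is routine; the one place that needs care is the claim $\| \bfx \|_{\mathcal{C}} > 0$ for $\bfx \neq \mathbf{0}$, which is where the standing assumption that $\mathrm{conv}(\mathcal{A})$ (and hence $\mathcal{C}$) is bounded with the origin in its interior genuinely enters. Without boundedness the gauge could vanish along a recession direction, and the affine hull could then pass through the origin, so I expect this positivity check, together with recording Euler's identity, to be the only non-mechanical part of the proof.
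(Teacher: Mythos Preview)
Your proposal is correct and follows essentially the same route as the paper. The orthogonality claim is handled identically via the projection characterization onto an affine set. For the positivity claim, the paper phrases the argument in terms of $\partial\|\bfx\|_{\mathcal{C}}$ being a proper face of the dual ball $\mathcal{C}^{*}$ with $\mathbf{0}\in\mathrm{int}(\mathcal{C}^{*})$, whereas you spell out Euler's identity $\langle \mathbf{w},\bfx\rangle=\|\bfx\|_{\mathcal{C}}$ to place the subdifferential in a hyperplane missing the origin; these are two presentations of the same underlying fact, and your version is slightly more self-contained.
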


\begin{proof}
Since $\bfx\neq \mathbf{0}$, the subdifferential $\partial \| \bfx \|_{\mathcal{C}}$ is a proper face of the dual norm ball $\mathcal{C}^{*} = \{ \bfy : \| \bfy \|_{\mathcal{C}}^{*}\leq1\} $. Also, since $\mathbf{w}_0 - \mathbf{0}$ is orthogonal to $\mathcal{H}$, we have $\langle \mathbf{w}_0 - \mathbf{0},\mathbf{w}- \mathbf{w}_0 \rangle = 0$ for all $\mathbf{w}\in \mathcal{H}$. In particular, this holds for all $\mathbf{w} \in \partial \| \bfx \|_{\mathcal{C}}$. By the assumption that the unit-norm ball has a non-empty interior (see reminder at the beginning of the Appendix), we have that $\mathbf{0} \in \mathrm{int}(\mathcal{C})$, which implies that $\mathbf{0} \in \mathrm{int}(\mathcal{C}^{*})$. Consequently, $\mathcal{H}$ does not contain $\mathbf{0}$ and thus $\mathbf{w}_0 \neq \mathbf{0}$. This implies that $\| \mathbf{w}_0 \|_{2} > 0$.
\end{proof}

\begin{lemma} \label{thm:lambdastarlipschitz} Let $\bfx \in \R^p$ be an arbitrary nonzero vector. Define $\lambda^{\star} : \mathbb{R}^{p} \mapsto \mathbb{R}$ as the function
\begin{align*}
\lambda^{\star} (\mathbf{g}) &:= \argmin_{\lambda \geq 0} \mathrm{dist} (\mathbf{g}, \lambda \cdot \partial \| \bfx \|_{\mathcal{C}}) \\
&= \argmin_{\lambda \geq 0} \mathrm{dist}^2 (\mathbf{g}, \lambda \cdot \partial \| \bfx \|_{\mathcal{C}}).
\end{align*}
Let $\mathcal{H}$ be the affine hull of $\lambda \cdot \partial \| \bfx \|_{\mathcal{C}}$. Then $\lambda^{\star}$ is $\frac{1}{\mathrm{dist}(\mathbf{0},\mathcal{H})}$-Lipschitz.
\end{lemma}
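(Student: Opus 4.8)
The plan is to recognize $\lambda^{\star}(\mathbf{g})$ as a linear functional of the Euclidean projection of $\mathbf{g}$ onto a fixed closed convex cone, after which the Lipschitz estimate follows from the nonexpansiveness of projections. Write $F := \partial \| \bfx \|_{\mathcal{C}}$, which is a compact convex set not containing the origin (it is a proper face of the dual ball, as recorded in Lemma \ref{lemma:w0exists}), and let $\mathbf{w}_0 = \Pi_{\mathcal{H}}(\mathbf{0})$ be as in that lemma, so that $\mathrm{dist}(\mathbf{0},\mathcal{H}) = \| \mathbf{w}_0 \|_2 > 0$. The structural fact I would extract from Lemma \ref{lemma:w0exists} is that $\langle \mathbf{w}, \mathbf{w}_0 \rangle = \| \mathbf{w}_0 \|_2^2$ for every $\mathbf{w} \in F$; that is, $F$ lies in the hyperplane with normal $\mathbf{w}_0$ at distance $\| \mathbf{w}_0 \|_2$ from the origin.

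First I would rewrite the defining minimization as a single projection. Since
\[
\min_{\lambda \geq 0} \mathrm{dist}^2(\mathbf{g}, \lambda F) = \min_{\lambda \geq 0} \min_{\mathbf{w} \in F} \| \mathbf{g} - \lambda \mathbf{w} \|_2^2 = \min_{\mathbf{v} \in K} \| \mathbf{g} - \mathbf{v} \|_2^2,
\]
where $K := \{ \lambda \mathbf{w} : \lambda \geq 0, \ \mathbf{w} \in F \}$ is the cone generated by $F$, the optimal point is $\mathbf{v}^{\star} = \Pi_K(\mathbf{g})$. A short argument (using that $F$ is compact convex with $\mathbf{0} \notin F$) shows $K$ is closed and convex, so $\Pi_K$ is well-defined, single-valued, and nonexpansive. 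The next step recovers the optimal scale \emph{linearly}: every $\mathbf{v} = \lambda \mathbf{w} \in K$ satisfies $\langle \mathbf{v}, \mathbf{w}_0 \rangle = \lambda \langle \mathbf{w}, \mathbf{w}_0 \rangle = \lambda \| \mathbf{w}_0 \|_2^2$, so the scale is $\lambda = \langle \mathbf{v}, \mathbf{w}_0 \rangle / \| \mathbf{w}_0 \|_2^2$. This both pins down $\lambda^{\star}$ uniquely (so the $\argmin$ is single-valued) and yields the representation
\[
\lambda^{\star}(\mathbf{g}) = \frac{\langle \Pi_K(\mathbf{g}), \mathbf{w}_0 \rangle}{\| \mathbf{w}_0 \|_2^2}.
\]

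Given this representation, the Lipschitz bound is immediate: for any $\mathbf{g}_1, \mathbf{g}_2 \in \R^p$,
\[
| \lambda^{\star}(\mathbf{g}_1) - \lambda^{\star}(\mathbf{g}_2) | = \frac{| \langle \Pi_K(\mathbf{g}_1) - \Pi_K(\mathbf{g}_2), \mathbf{w}_0 \rangle |}{\| \mathbf{w}_0 \|_2^2} \leq \frac{\| \Pi_K(\mathbf{g}_1) - \Pi_K(\mathbf{g}_2) \|_2}{\| \mathbf{w}_0 \|_2} \leq \frac{\| \mathbf{g}_1 - \mathbf{g}_2 \|_2}{\| \mathbf{w}_0 \|_2},
\]
by Cauchy--Schwarz followed by nonexpansiveness of $\Pi_K$; since $\| \mathbf{w}_0 \|_2 = \mathrm{dist}(\mathbf{0},\mathcal{H})$, this is exactly the claimed constant.

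The main obstacle I anticipate lies in the reduction step rather than the final estimate: one must justify carefully that minimizing $\mathrm{dist}^2(\mathbf{g}, \lambda F)$ over $\lambda \geq 0$ genuinely coincides with projecting onto the cone $K$, and that the scale recovered from $\Pi_K(\mathbf{g})$ equals $\lambda^{\star}(\mathbf{g})$. Concretely this requires (i) verifying that $K$ is closed and convex, so that $\Pi_K$ is well-defined and nonexpansive, and (ii) checking single-valuedness of $\lambda^{\star}$, including the degenerate case $\Pi_K(\mathbf{g}) = \mathbf{0}$, which forces $\lambda^{\star} = 0$ consistently with the formula since then $\langle \Pi_K(\mathbf{g}), \mathbf{w}_0 \rangle = 0$. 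Both points rest on the separation of $F$ from the origin guaranteed by Lemma \ref{lemma:w0exists}. Once the linear representation of $\lambda^{\star}$ through $\Pi_K$ is established, the remainder is a one-line computation.
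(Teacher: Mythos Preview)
Your proof is correct and follows essentially the same approach as the paper: both arguments recognize that the optimal point $\lambda^{\star}(\mathbf{g})\mathbf{w}_{\mathbf{g}}$ is the projection of $\mathbf{g}$ onto the cone $K=\bigcup_{\lambda\ge 0}\lambda\cdot\partial\|\bfx\|_{\mathcal{C}}$, use the orthogonality relation $\langle \mathbf{w},\mathbf{w}_0\rangle=\|\mathbf{w}_0\|_2^2$ from Lemma~\ref{lemma:w0exists} to read off the scale, and then apply Cauchy--Schwarz together with nonexpansiveness of $\Pi_K$. The only cosmetic difference is that you package these ingredients into the single formula $\lambda^{\star}(\mathbf{g})=\langle \Pi_K(\mathbf{g}),\mathbf{w}_0\rangle/\|\mathbf{w}_0\|_2^2$, whereas the paper carries out the equivalent decomposition $\lambda^{\star}(\mathbf{g}_2)\mathbf{w}_{\mathbf{g}_2}-\lambda^{\star}(\mathbf{g}_1)\mathbf{w}_{\mathbf{g}_1}=(\lambda^{\star}(\mathbf{g}_2)-\lambda^{\star}(\mathbf{g}_1))\mathbf{w}_0+(\text{terms orthogonal to }\mathbf{w}_0)$ directly.
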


\begin{proof}
Let $\mathbf{g}_1,\mathbf{g}_2$ be arbitrary vectors in $\mathbb{R}^{p}$. Since $\|\bfx\|_{\mathcal{C}}< \infty$, the subdifferential $\partial \| \bfx\|_{\mathcal{C}}$ is a closed convex set \cite{Roc:70}. Hence we may let $\mathbf{w}_{\mathbf{g}_1}$ be the point in $\partial \| \bfx \|_{\mathcal{C}}$ such that $\| \mathbf{w}_{\mathbf{g}_1}-{\mathbf{g}_1}\|_{2}=\mathrm{dist} ({\mathbf{g}_1}, \lambda^{\star}({\mathbf{g}_1}) \cdot \partial \|\bfx\|_{\mathcal{C}} )$. Define $\mathbf{w}_{\mathbf{g}_2}$ similarly. Let $\mathbf{w}_{0} = \Pi_{\mathcal{H}}(0)$ so that
% Thus $\lambda^{\star}(\mathbf{g}_1) \mathbf{w}_{\mathbf{g}_1} = \Pi_{\partial \| \bfx \|_{\mathcal{C}}}({\mathbf{g}_1})$ and $\lambda^{\star}(\mathbf{g}_2) \mathbf{w}_{\mathbf{g}_2} = \Pi_{\partial \| \bfx \|_{\mathcal{C}}}({\mathbf{g}_2})$.
% Single Column version
\begin{eqnarray} \label{eq:lemma3w0}
\| \lambda^{\star}(\mathbf{g}_2) \mathbf{w}_{\mathbf{g}_2} &-& \lambda^{\star}(\mathbf{g}_1) \mathbf{w}_{g_1} \|_{2} \nonumber \\ &=& \| (\lambda^{\star}(\mathbf{g}_2) - \lambda^{\star}(\mathbf{g}_1)) \mathbf{w}_0 + (\lambda^{\star}(\mathbf{g}_2)(\mathbf{w}_{\mathbf{g}_2} - \mathbf{w}_0)+ \lambda^{\star}(\mathbf{g}_1)(\mathbf{w}_0 - \mathbf{w}_{\mathbf{g}_1})) \|_{2} \nonumber\\
& \geq &  \langle (\lambda^{\star}(\mathbf{g}_2) - \lambda^{\star}(\mathbf{g}_1)) \mathbf{w}_0 + (\lambda^{\star}(\mathbf{g}_2)(\mathbf{w}_{\mathbf{g}_2} - \mathbf{w}_0)+ \lambda^{\star}(\mathbf{g}_1)(\mathbf{w}_0 - \mathbf{w}_{\mathbf{g}_1})) , \mathbf{w}_0\rangle \frac{1}{\| \mathbf{w}_0 \|_2} \nonumber\\
&=& \| \mathbf{w}_0 \|_2|\lambda^{\star}(\mathbf{g}_2) - \lambda^{\star}(\mathbf{g}_1) |,
\end{eqnarray}
% Double column edition
%\begin{align*} \label{eq:lemma3w0}
%& \| \lambda^{\star}(\textbf{g}_2) \textbf{w}_{\textbf{g}_2} - \lambda^{\star}(\textbf{g}_1) \textbf{w}_{g_1} \|_{2} \\
%=& \| (\lambda^{\star}(\textbf{g}_2) - \lambda^{\star}(\textbf{g}_1)) \textbf{w}_0 + (\lambda^{\star}(\textbf{g}_2)(\textbf{w}_{\textbf{g}_2} - \textbf{w}_0) \\
%& + \lambda^{\star}(\textbf{g}_1)(\textbf{w}_0 - \textbf{w}_{\textbf{g}_1})) \|_{2} \nonumber\\
% \geq &  \langle (\lambda^{\star}(\textbf{g}_2) - \lambda^{\star}(\textbf{g}_1)) \textbf{w}_0 + (\lambda^{\star}(\textbf{g}_2)(\textbf{w}_h - \textbf{w}_0) \\
%& + \lambda^{\star}(\textbf{g}_1)(\textbf{w}_0 - \textbf{w}_{\textbf{g}_1})) , \textbf{w}_0\rangle \frac{1}{\| \textbf{w}_0 \|_2} \nonumber\\
%=& \| \mathbf{w}_0 \|_2|\lambda^{\star}(\mathbf{g}_2) - \lambda^{\star}(\mathbf{g}_1) | .
%\end{align*}
where the last equality follows from Lemma \ref{lemma:w0exists}. Recall that projection onto a nonempty, closed convex set is nonexpansive, and thus we have $\| {\mathbf{g}_2} - {\mathbf{g}_1} \|_2 \geq \| \Pi_{\cup_{\lambda \geq 0} \{\lambda \cdot \partial \|\bfx\|_{\mathcal{C}}\}}(\mathbf{g}_2) -\Pi_{\cup_{\lambda \geq 0} \{\lambda \cdot \partial \|\bfx\|_{\mathcal{C}}\}} (\mathbf{g}_1)\|_2 =\| \Pi_{\lambda^{\star}({\mathbf{g}_2}) \cdot \partial \|\bfx\|_{\mathcal{C}}}(\mathbf{g}_2) -\Pi_{\lambda^{\star}({\mathbf{g}_1}) \cdot \partial \|\bfx\|_{\mathcal{C}}} (\mathbf{g}_1)\|_2 = \| \lambda^{\star}(\mathbf{g}_2) \mathbf{w}_{\mathbf{g}_2} - \lambda^{\star}(\mathbf{g}_1) \mathbf{w}_{\mathbf{g}_1} \|_2 \geq \| \mathbf{w}_0 \|_2|\lambda^{\star}(\mathbf{g}_2) - \lambda^{\star}(\mathbf{g}_1) | $.
\end{proof}

\emph{Proof of Proposition \ref{thm:etabound}}

Our proof is a minor modification of the proof of \cite[Prop.1.]{FoyMac:13}.  Let $\mathcal{H}$ be the affine hull of $\partial \| \bfx \|_{\mathcal{C}}$ and $\mathbf{w}_0 = \Pi_{\mathcal{H}}(\mathbf{0})$. From Lemma \ref{thm:lambdastarlipschitz}, we have $\lambda^{\star}$ is $\frac{1}{\| \mathbf{w}_0 \|_2}$-Lipschitz function. Hence by \cite[Theorem 5.3]{Led:01}, we have $|\lambda^{\star} (\bfe) - \expc[\lambda^{\star}(\bar{\bfe})] | \leq c $ for $\bar{\bfe} \sim \mathcal{N}(\mathbf{0}, I_{p \times p})$ with probability greater than $1-2\exp(-(c \| \mathbf{w}_0 \|_2)^2 /2)$.  Suppressing the dependence on $\bar{\bfe}$, consider the event $E_c :=\{|\lambda^{\star} (\bfe) - \expc[\lambda^{\star}] | \leq c  \}$, and condition on this event. Define $\mathbf{w}_1:=\Pi_{\partial \| \bfx \|_{\mathcal{C}}} (\mathbf{0})$ so that $\| \mathbf{w}_1\|_2 / \| \mathbf{w}_0 \|_2 = \kappa(\bfx)$. Let $\mathbf{w}_{\bfe} \in \partial \| \bfx \|_{\mathcal{C}}$ be such that $\| \mathbf{w}_{\bfe}-{\bfe}\|_{2}=\mathrm{dist} ({\bfe}, \lambda^{\star}({\bfe}) \cdot \partial \|\bfx\|_{\mathcal{C}} )$. One has that $\frac{\lambda^{\star} (\bfe) }{\expc [\lambda^{\star}]+ c} \mathbf{w}_{\bfe}+ \frac{\expc[\lambda^{\star}]+ c -\lambda^{\star}(\bfe)}{\expc[\lambda^{\star}]+c} \mathbf{w}_1$ is a convex combination of $\mathbf{w}_1$ and $\mathbf{w}_{\bfe}$ (as we condition on $E_c$), and hence it belongs to $\partial \| \bfx \|_{\mathcal{C}}$. Then
% single column edition
\begin{align} \label{eq:etabound_distbound}
\mathrm{dist}(\bfe, (\expc [\lambda^{\star}]+c) \cdot \partial \| \bfx \|_{\mathcal{C}}) &\overset{\text{(i)}}{\leq} \| \bfe  - ( \lambda^{\star}(\bfe)  \mathbf{w}_{\bfe} + (\mathbb{E}[\lambda^{\star}]+ c - \lambda^{\star}(\bfe)) \mathbf{w}_1 ) \|_2 \nonumber\\
&\overset{\text{(ii)}}{\leq} \mathrm{dist} (\bfe,\lambda^{\star}({\bfe}) \cdot \partial \|\bfx\|_{\mathcal{C}} )) + \|  (\mathbb{E}[\lambda^{\star}]+ c - \lambda^{\star}(\bfe)) \mathbf{w}_1 \|_{2} \nonumber\\
&\overset{\text{(iii)}}{\leq} \mathrm{dist} (\bfe,\lambda^{\star}({\bfe}) \cdot \partial \|\bfx\|_{\mathcal{C}} )) + 2c \kappa(\bfx) \|\mathbf{w}_0 \|_2
\end{align}
% Double column edition
%\begin{align*}
%&\mathrm{dist}(\bfe, (\expc [\lambda^{\star}]+c) \cdot \partial \| \bfx \|_{\mathcal{C}})  \\
%\overset{\text{(i)}}{\leq}& \| \bfe  - ( \lambda^{\star} \cdot w_{\bfe} + (\mathbb{E}[\lambda^{\star}]+ c - \lambda^{\star}) w_1 ) \|_2 \\
%\overset{\text{(ii)}}{\leq}& \mathrm{dist} (\bfe, \mathcal{N}) + \|  (\mathbb{E}[\lambda^{\star}]+ c - \lambda^{\star}) w_1 \|_{2} \\
%\leq & \mathrm{dist} (\bfe,\mathcal{N}) + 2c \kappa \|w_0 \|_2
%\end{align*}
where (i) is a consequence of $\frac{\lambda^{\star} (\bfe) }{\expc [\lambda^{\star}]+ c} \mathbf{w}_{\bfe}+ \frac{\expc[\lambda^{\star}]+ c -\lambda^{\star}(\bfe)}{\expc[\lambda^{\star}]+c} \mathbf{w}_1 \in \partial \| \bfx \|_{\mathcal{C}}$, (ii) follows from the triangle inequality, and (iii) follows from the definition of $\kappa(\bfx)$ and our conditioning on the event $E_c$. Define the function $m: \mathbb{R}^{p} \mapsto \mathbb{R}$
\begin{equation*}
m(\bfe) = \mathrm{dist}(\bfe, (\expc [\lambda^{\star}]+c) \cdot \partial \| \bfx \|_{\mathcal{C}}) - \mathrm{dist}(\bfe,\lambda^{\star}({\bfe}) \cdot \partial \|\bfx\|_{\mathcal{C}}).
\end{equation*}
Since $m(\bfe)$ is the difference of two $1$-Lipschitz functions and hence $2$-Lipschitz, we have the concentration inequality $\prob(m < \expc [m] - r ) \leq \exp(-r^2 / 8)$. By setting $r =  \sqrt{8 \log(1/(1 - 2 \exp (- (c\|\mathbf{w}_0 \|_2)^2/2)))}$ we have $\exp(-r^2/8) = \allowbreak 1- \allowbreak 2\exp(-(c \| \mathbf{w}_0 \|_2)^2 /2)$. From \eqref{eq:etabound_distbound} the event $\{ m(\bfe) \leq 2c \kappa(\bfx) \|\mathbf{w}_0 \|_2 \} $ holds with probability greater than $1-2\exp(-(c \| \mathbf{w}_0 \|_2)^2 /2)$. Hence it must be the case that 
\begin{equation} \label{eq:propgaussianwidthintstep}
\expc[m(\bfe) ] \leq 2\kappa(\bfx) c \|\mathbf{w}_0\|_2 + \sqrt{8 \log(1/(1 - 2 \exp (- (c\|\mathbf{w}_0 \|)^2/2)))}.
\end{equation}

%\begin{align*}
%& \expc\{ [\mathrm{dist}( \bfe, \expc [\lambda^{\star} + c] \cdot \partial \| x \|)] -  [\mathrm{dist}(\bfe,\mathcal{N})] \}\\
% \leq& 2\kappa(\bfx) c \|\mathbf{w}_0\|_2 + \sqrt{8 \log(1/(1 - 2 \exp (- (c\|\mathbf{w}_0 \|)^2/2)))}.
%\end{align*}
Define $N: = \cup_{\lambda \geq 0} \{ \lambda \cdot \partial \|\bfx \|_{\mathcal{C}}\} $. We have
% Single column edition
\begin{eqnarray*}
 \eta_{\mathcal{C}} (\bfx) &=& \inf_{\lambda} \bigl\{ \expc [\mathrm{dist}(\bfe, \lambda \cdot \partial \| \bfx \|_\mathcal{C})]  \bigr\} \\
&\leq&  \expc [\mathrm{dist}(\bfe, (\expc[\lambda^{\star}]+c) \cdot \partial \| \bfx \|_\mathcal{C})]   \\
&=& \expc[\mathrm{dist}(\bfe,\lambda^{\star}({\bfe}) \cdot \partial \|\bfx\|_{\mathcal{C}})]+ \expc [m(\bfe)]  \\
&\overset{\text{(i)}}{=}& \expc[\mathrm{dist}(\bfe,N)]+ \expc [m(\bfe)]  \\
&\overset{\text{(ii)}}{\leq}& \expc [\mathrm{dist} (\bfe,N )]  + 2\kappa(\bfx) c \|\mathbf{w}_0\|_2 + \sqrt{8 \log(1/(1 - 2 \exp (- (c\|\mathbf{w}_0 \|)^2/2)))} \\
&\overset{\text{(iii)}}{\leq}& \{\expc [\mathrm{dist}^2 (\bfe,N )]\}^{1/2}  + 2\kappa(\bfx) c \|\mathbf{w}_0\|_2 + \sqrt{8 \log(1/(1 - 2 \exp (- (c\|\mathbf{w}_0 \|)^2/2)))} \\
%&=&  \expc_{g\sim \mathcal{N}(0,I_{p \times p})}[\sup_{z \in T\cap \mathbb{B}^p}\langle g,z \rangle] + 2\kappa(\bfx) c \|\mathbf{w}_0\|_2 + \sqrt{8 \log(1/(1 - 2 \exp (- (c\|\mathbf{w}_0 \|)^2/2)))} \\
&\overset{\text{(iv)}}{\leq}&  \omega(T_{\mathcal{C}}(\bfx) \cap \mathbb{S}^{p-1}) + 1+ 2\kappa(\bfx) c\| \mathbf{w}_0\|_2 +\sqrt{8 \log(1/(1 - 2 \exp (- (c \|\mathbf{w}_0 \|)^2/2)))} ,
\end{eqnarray*}
% Double column edition
%\begin{eqnarray*}
%&& \inf_{\lambda} \bigl\{ \expc [\mathrm{dist}(\bfe, \lambda \cdot \partial \| x \|)]  \bigr\} \\
%&\overset{\text{(i)}}{\leq}& \expc [\mathrm{dist} (\bfe,\mathcal{N} )] \\
%&& + 2\kappa(\bfx) c \|\mathbf{w}_0\|_2 + \sqrt{8 \log(1/(1 - 2 \exp (- (c\|\mathbf{w}_0 \|)^2/2)))} \\
%&=&  \expc_{g\sim \mathcal{N}(0,I_{p \times p})}[\sup_{z \in T\cap \mathbb{B}^p}\langle g,z \rangle] + 2\kappa(\bfx) c \|\mathbf{w}_0\|_2 \\
%&&  + \sqrt{8 \log(1/(1 - 2 \exp (- (c\|\mathbf{w}_0 \|)^2/2)))} \\
%&\overset{\text{(ii)}}{\leq}&  \{ \omega(T_{\mathcal{C}}(\bfx) \cap \mathbb{S}^{p-1}) + \frac{1}{\sqrt{2\pi}}  + 2\kappa(\bfx) c\| \mathbf{w}_0\|_2 \\
%&& +\sqrt{8 \log(1/(1 - 2 \exp (- (c \|\mathbf{w}_0 \|)^2/2)))} \}.
%\end{eqnarray*}
where (i) follows from the definition of $\lambda^{\star}$, (ii) follows from \eqref{eq:propgaussianwidthintstep}, (iii) follows Jensen's Inequality, and (iv) follows from \cite[Proposition 10.1]{ALMT:13}. We obtain the desired bound by setting $c=1.5 / \|\mathbf{w}_0\|_2$. $\qed$

%%% Proof of Proposition  \ref{thm:etascaled} %%%%%%%%%%%%%%%%%%%%%%%%%%%%%%%%

%Finally, we prove Proposition \ref{thm:etascaled}.
%
%\begin{proof}[Proposition \ref{thm:etascaled}] We have
%%\begin{eqnarray*}
%%\sigma \expc_{g \sim \mathcal{N}(0 , I_{p \times p})}[\mathrm{dist}(g, \lambda \cdot \partial \| \bfx^{\star}(t) \|_{\mathcal{C}})] &=&  \expc_{g \sim \mathcal{N} (0,I_{p \times p}) } [\mathrm{dist}(\sigma g, \sigma \lambda \cdot \partial \| \bfx^{\star}(t) \|_{\mathcal{C}})] \\
%%&=&  \expc_{\bfe \sim \mathcal{N} (0, \sigma^2 I_{p \times p}) } [\mathrm{dist}( \bfe, \sigma \lambda \cdot \partial \| \bfx^{\star}(t) \|_{\mathcal{C}})] ,
%%\end{eqnarray*}
%\begin{align*}
%& \sigma \expc_{g \sim \mathcal{N}(0 , I_{p \times p})}[\mathrm{dist}(g, \lambda \cdot \partial \| \bfx^{\star}(t) \|_{\mathcal{C}})] \\
% =&  \expc_{g \sim \mathcal{N} (0,I_{p \times p}) } [\mathrm{dist}(\sigma g, \sigma \lambda \cdot \partial \| \bfx^{\star}(t) \|_{\mathcal{C}})] \\
%=&  \expc_{\bfe \sim \mathcal{N} (0, \sigma^2 I_{p \times p}) } [\mathrm{dist}( \bfe, \sigma \lambda \cdot \partial \| \bfx^{\star}(t) \|_{\mathcal{C}})] .
%\end{align*}
%In particular, the equality holds for all $\lambda$ and all $t$ and so this proves the proposition. \qed
%\end{proof}

%%%%% Proofs of Lemmas %%%%%%%%%%%%%%%%%%%%%%%%%%%%%%%%%%%%%%%%%%

\subsection{Analysis of proximal denoising operators} \label{sec:prep}

%In this section we state and prove a series of results that allow us to bound the error of an estimator obtained by applying a proximal operator. %While these results are stated in its full generality for a general convex function $f$, we will mostly let $f = \lambda \| \cdot \|_{\mathcal{C}}$ for some $\lambda \geq 0$, and some convex set $\mathcal{C}$ in this paper.

The first result describes a useful monotonicity property of convex functions \cite{Min:64}.

\begin{lemma}[Monotonicity, \cite{Min:64}] \label{thm:monotonicity} Let $f$ be a convex function. Let $\bfx_1,\bfx_2 \in \mathbb{R}^p$. Then for any $\mathbf{Z}_{i} \in \partial f(\bfx_i), i= 1,2$, we have
\begin{equation*}
\langle \mathbf{Z}_1 - \mathbf{Z}_2 , \bfx_1 - \bfx_2 \rangle \geq 0.
\end{equation*}
\end{lemma}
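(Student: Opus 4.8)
The plan is to invoke nothing more than the defining subgradient inequality for a convex function and to add two instances of it. Recall that since $f$ is convex and $\mathbf{Z}_i \in \partial f(\bfx_i)$ for $i = 1,2$, the subgradient inequality gives $f(\bfy) \geq f(\bfx_i) + \langle \mathbf{Z}_i, \bfy - \bfx_i \rangle$ for every $\bfy \in \R^p$. The idea is simply to test each of these two inequalities at the other point.

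First I would instantiate the inequality for $i=1$ at the point $\bfy = \bfx_2$, which yields $f(\bfx_2) \geq f(\bfx_1) + \langle \mathbf{Z}_1, \bfx_2 - \bfx_1 \rangle$. Symmetrically, instantiating the inequality for $i=2$ at the point $\bfy = \bfx_1$ gives $f(\bfx_1) \geq f(\bfx_2) + \langle \mathbf{Z}_2, \bfx_1 - \bfx_2 \rangle$. Adding these two inequalities, the function values $f(\bfx_1)$ and $f(\bfx_2)$ appear on both sides and cancel, leaving $0 \geq \langle \mathbf{Z}_1, \bfx_2 - \bfx_1 \rangle + \langle \mathbf{Z}_2, \bfx_1 - \bfx_2 \rangle$. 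Collecting the two inner products with the common factor $\bfx_1 - \bfx_2$ rewrites the right-hand side as $\langle \mathbf{Z}_2 - \mathbf{Z}_1, \bfx_1 - \bfx_2 \rangle$, and negating both sides produces exactly the claimed bound $\langle \mathbf{Z}_1 - \mathbf{Z}_2, \bfx_1 - \bfx_2 \rangle \geq 0$.

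There is essentially no genuine obstacle in this argument; it is the classical monotonicity of the subdifferential operator. The only point requiring care is the bookkeeping of signs when combining the two subgradient inequalities, so that the final pairing emerges as $\langle \mathbf{Z}_1 - \mathbf{Z}_2, \bfx_1 - \bfx_2 \rangle$ rather than its negative. I would also note that the argument uses only the subgradient inequality and makes no appeal to differentiability, so it holds for an arbitrary convex $f$ and arbitrary subgradient selections $\mathbf{Z}_i \in \partial f(\bfx_i)$, precisely as stated.
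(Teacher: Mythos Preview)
Your argument is correct and is the standard two-line proof of monotonicity of the subdifferential. The paper does not actually supply a proof of this lemma; it simply states the result with a citation to \cite{Min:64}, so there is nothing further to compare.
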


Our second result applies this monotonicity property to show that the error of proximal denoising operators is robust to small changes in the underlying signal $\bfx^{\star}$.  Notice that our proposition also describes the performance of proximal denoisers for \emph{combinations} of structured signals corrupted by noise.  This is relevant in our subsequent analysis because the proximal denoiser is applied to averages computed near change-points.
\begin{proposition}[Robustness] \label{thm:error_robust} Suppose $\hat{\bfx} = \argmin_{\bfx} \frac{1}{2} \| \bfx^{\star} + \bfe - \bfx \|^2_{2} + f(\bfx)$ for some convex function $f$ and

\noindent Case 1 (Convex combination of two structured signals): $\bfx^{\star} = \mu \bfx^{\star}_{0}+ (1-\mu) \bfx^{\star}_{1}$ for some $0\leq \mu \leq 1$ is a convex combination of two signals $\bfx^{\star}_{0}$ and $\bfx^{\star}_{1}$. Then
\begin{equation*}
\expc \bigl[ \| \bfx^{\star}_{0}-\hat{\bfx} \|_2 \bigr] \leq (1-\mu) \| \bfx^{\star}_{0} - \bfx^{\star}_{1} \|_2 +  \expc[\mathrm{dist}(\bfe,\partial f(\bfx^{\star}_{0} ) )].
\end{equation*}
In particular when $\mu = 1$ there is no mixture. In this special case the error bound simplifies to
\begin{equation*}
\expc \bigl[ \| \bfx^{\star}_{0}-\hat{\bfx} \|_2 \bigr] \leq  \expc[\mathrm{dist}(\bfe,\partial f(\bfx^{\star}_{0} ) )].
\end{equation*}

\noindent Case 2 (Small perturbation to a structured signal): $\bfx^{\star} = \bfx^{\star}_0 + \boldsymbol\Delta$. Then
\begin{equation*}
\expc \bigl[ \| \bfx^{\star}_{0}-\hat{\bfx} \|_2 \bigr] \leq \| \boldsymbol\Delta \|_2 +  \expc[\mathrm{dist}(\bfe,\partial f(\bfx^{\star}_{0} )  )].
\end{equation*}
Here the expectations are with respect to $\bfe$.
\end{proposition}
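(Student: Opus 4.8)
The plan is to reduce both cases to a single deterministic (pathwise) inequality extracted from the optimality conditions of the proximal problem, deferring the expectation over $\bfe$ to the very last step. The key observation is that Case 1 is Case 2 in disguise: writing $\bfx^\star = \mu\bfx_0^\star + (1-\mu)\bfx_1^\star = \bfx_0^\star + (1-\mu)(\bfx_1^\star - \bfx_0^\star)$, I identify the perturbation $\boldsymbol\Delta = (1-\mu)(\bfx_1^\star - \bfx_0^\star)$, so that $\|\boldsymbol\Delta\|_2 = (1-\mu)\|\bfx_0^\star - \bfx_1^\star\|_2$ and the Case 1 bound follows at once from Case 2; the $\mu = 1$ specialization is just $\boldsymbol\Delta = 0$. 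It therefore suffices to prove Case 2.

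To prove Case 2, I would first record the first-order optimality condition for the strongly convex objective defining $\hat{\bfx}$. Since the quadratic term is differentiable, stationarity gives $\hat{\bfx} - (\bfx^\star + \bfe) + \mathbf{g} = 0$ for some $\mathbf{g} \in \partial f(\hat{\bfx})$, i.e. $(\bfx^\star + \bfe) - \hat{\bfx} \in \partial f(\hat{\bfx})$. I would then apply the Monotonicity Lemma (Lemma \ref{thm:monotonicity}) to the pair $\hat{\bfx}$ and $\bfx_0^\star$, using the subgradient $(\bfx^\star + \bfe) - \hat{\bfx} \in \partial f(\hat{\bfx})$ at the first point and any $\mathbf{w} \in \partial f(\bfx_0^\star)$ at the second; this yields $\langle (\bfx^\star + \bfe) - \hat{\bfx} - \mathbf{w}, \hat{\bfx} - \bfx_0^\star \rangle \geq 0$.

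The core of the argument is then purely algebraic. Substituting $\bfx^\star = \bfx_0^\star + \boldsymbol\Delta$ and writing $\mathbf{d} := \hat{\bfx} - \bfx_0^\star$, the inequality rearranges to $\|\mathbf{d}\|_2^2 \leq \langle \boldsymbol\Delta + \bfe - \mathbf{w}, \mathbf{d}\rangle$. Cauchy--Schwarz gives $\|\mathbf{d}\|_2 \leq \|\boldsymbol\Delta + \bfe - \mathbf{w}\|_2$ for every $\mathbf{w} \in \partial f(\bfx_0^\star)$ (the case $\mathbf{d} = 0$ being trivial), and taking the infimum over $\mathbf{w}$ produces the pathwise bound $\|\hat{\bfx} - \bfx_0^\star\|_2 \leq \mathrm{dist}(\boldsymbol\Delta + \bfe, \partial f(\bfx_0^\star))$. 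A triangle inequality for the distance functional, $\mathrm{dist}(\boldsymbol\Delta + \bfe, \partial f(\bfx_0^\star)) \leq \|\boldsymbol\Delta\|_2 + \mathrm{dist}(\bfe, \partial f(\bfx_0^\star))$, separates the deterministic bias term from the noise term, and taking expectations over $\bfe$ delivers the claimed bound.

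Since each step is a standard convex-analytic manipulation, I do not anticipate a serious technical obstacle; the main subtlety is conceptual. One must anchor the monotonicity comparison at the \emph{target} signal $\bfx_0^\star$ rather than at the true mean $\bfx^\star$, so that the mismatch $\boldsymbol\Delta$ surfaces as an additive term, and one must keep the entire chain of inequalities pathwise in $\bfe$, deferring the expectation to the final line. This vantage point also explains the clean additive form of the result: the denoiser's error measured against $\bfx_0^\star$ is controlled by its error measured against the true mean plus the bias $\|\boldsymbol\Delta\|_2$ incurred by comparing to the wrong signal.
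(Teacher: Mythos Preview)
Your proof is correct and follows essentially the same approach as the paper: both arguments fix $\bfe$, invoke the optimality condition to get a subgradient of $f$ at $\hat{\bfx}$, apply the monotonicity lemma against a subgradient at $\bfx_0^\star$, and then use Cauchy--Schwarz before dividing through and taking expectations. The only cosmetic differences are that the paper proves Case~1 directly (deriving Case~2 by change of variables) whereas you prove Case~2 and specialize to Case~1, and that the paper fixes the distance-minimizing subgradient $\mathbf{Z}_0$ up front while you take the infimum over $\mathbf{w}$ at the end---neither of these changes the substance of the argument.
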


\begin{proof}
We only prove Case 1 since Case 2 follows from a change of variables. We begin by fixing an $\bfe$. From the optimality conditions, we have $\mu \bfx^{\star}_{0}+ (1-\mu) \bfx^{\star}_{1}+\bfe - \hat{\bfx}  \in \partial \| \hat{\bfx}\|_{\mathcal{C}}$. Let $\mathbf{Z}_0 = \argmin_{\mathbf{Z} \in \partial f(\bfx^{\star}_{0})} \| \mathbf{Z} -  \bfe\| $. From the monotonicity property in Lemma \ref{thm:monotonicity} we have
\begin{equation*}
\langle \mu \bfx^{\star}_{0}+ (1-\mu) \bfx^{\star}_{1}+\bfe - \hat{\bfx} -\mathbf{Z}_0,\hat{\bfx} - \bfx^{\star}_{0} \rangle \geq 0 .
\end{equation*}
Rearranging terms and applying the Cauchy-Schwarz inequality, we obtain
\begin{equation*}
(1-\mu) \|  \bfx^{\star}_{0} - \bfx^{\star}_{1}\|_2 \|  \bfx^{\star}_{0} - \hat{\bfx} \|_2 + \| \mathbf{Z}_0 - \bfe\|_2 \|  \bfx^{\star}_{0} - \hat{\bfx} \|_2 \geq \| \bfx^{\star}_{0}-\hat{\bfx} \|^2_2.
\end{equation*}
Finally, we divide through by $\| \bfx^{\star}_{0} -\hat{\bfx} \|_2$ and take expectations on both sides with respect to $\bfe$ to obtain the desired result.
\end{proof}

The final result concerns a Lipschitz property of proximal operators. Demonstrating such a property allows us to subsequently appeal to concentration of measure results \cite{Led:01}.

%\begin{lemma} [Proximal operators are non-expansive, Section 5 of \cite{Mor:65}] \label{thm:optlipschitz} Suppose $f$ is a convex function. \label{thm:conc_eq} Define the function $h_{\bfx^{\star}} :\mathbb{R}^{p} \mapsto \mathbb{R}$ as $h_{\bfx^{\star}}(\bfe) = \| \hat{\bfx} - \bfx^{\star}\|_2$, where $\hat{\bfx}$ is the optimal solution to
%\begin{equation} \label{eq:thm_conc_eq_opt}
%\hat{\bfx} = \argmin_{\bfx} \frac{1}{2} \| \bfx^{\star}+\bfe - \bfx \|^2_2 + f(\bfx).
%\end{equation}
%Then $h_{\bfx^{\star}}$ is $1$-Lipschitz.
%\end{lemma}

\begin{lemma} [Proximal operators are non-expansive, Section 5 of \cite{Mor:65}]\label{thm:conc_eq} Suppose $f$ is a convex function. Let $\hat{\bfx}(\bfe)$ be the optimal solution of the following optimization problem
\begin{equation} \label{eq:thm_conc_eq_opt}
\hat{\bfx}(\bfe) = \argmin_{\bfx} \frac{1}{2} \| \bfx^{\star}+\bfe - \bfx \|^2_2 + f(\bfx).
\end{equation}
Then $\|\bfe_1 - \bfe_2 \|_2 \geq \|\hat{\bfx}(\bfe_1) - \hat{\bfx}(\bfe_2) \|_2 $.
\end{lemma}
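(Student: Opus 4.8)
The plan is to mirror the structure of the proof of Proposition \ref{thm:error_robust}: characterize each minimizer through its optimality condition, invoke the monotonicity property of subgradients (Lemma \ref{thm:monotonicity}), and close with Cauchy--Schwarz. The key observation is that $\hat{\bfx}(\bfe_1)$ and $\hat{\bfx}(\bfe_2)$ are both minimizers of problems of the form \eqref{eq:thm_conc_eq_opt} that differ only through the additive perturbation $\bfe$; since the inputs to the two proximal problems differ precisely by $\bfe_1 - \bfe_2$, the claim is exactly the statement that the map $\bfe \mapsto \hat{\bfx}(\bfe)$ is $1$-Lipschitz.

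First I would write down the first-order optimality conditions. Since $f$ is convex and the quadratic term is smooth, the minimizer $\hat{\bfx}(\bfe_i)$ satisfies
\begin{equation*}
\bfx^{\star} + \bfe_i - \hat{\bfx}(\bfe_i) \in \partial f(\hat{\bfx}(\bfe_i)), \qquad i = 1,2.
\end{equation*}
Denoting $\mathbf{Z}_i := \bfx^{\star} + \bfe_i - \hat{\bfx}(\bfe_i) \in \partial f(\hat{\bfx}(\bfe_i))$, the monotonicity property of Lemma \ref{thm:monotonicity} applied to the two subgradients gives
\begin{equation*}
\langle \mathbf{Z}_1 - \mathbf{Z}_2, \, \hat{\bfx}(\bfe_1) - \hat{\bfx}(\bfe_2) \rangle \geq 0.
\end{equation*}
Substituting the definitions of $\mathbf{Z}_1, \mathbf{Z}_2$, the $\bfx^{\star}$ terms cancel and the inequality becomes
\begin{equation*}
\langle (\bfe_1 - \bfe_2) - (\hat{\bfx}(\bfe_1) - \hat{\bfx}(\bfe_2)), \, \hat{\bfx}(\bfe_1) - \hat{\bfx}(\bfe_2) \rangle \geq 0,
\end{equation*}
which rearranges to $\langle \bfe_1 - \bfe_2, \, \hat{\bfx}(\bfe_1) - \hat{\bfx}(\bfe_2) \rangle \geq \| \hat{\bfx}(\bfe_1) - \hat{\bfx}(\bfe_2) \|_2^2$.

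The final step is to apply Cauchy--Schwarz to the left-hand side, yielding $\| \bfe_1 - \bfe_2 \|_2 \, \| \hat{\bfx}(\bfe_1) - \hat{\bfx}(\bfe_2) \|_2 \geq \| \hat{\bfx}(\bfe_1) - \hat{\bfx}(\bfe_2) \|_2^2$, and then to divide through by $\| \hat{\bfx}(\bfe_1) - \hat{\bfx}(\bfe_2) \|_2$. I do not expect a genuine obstacle here; the only point requiring care is the degenerate case $\hat{\bfx}(\bfe_1) = \hat{\bfx}(\bfe_2)$, for which the conclusion holds trivially (both sides involving the difference vanish), so one may assume $\hat{\bfx}(\bfe_1) \neq \hat{\bfx}(\bfe_2)$ before dividing. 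This recovers the desired non-expansiveness $\| \bfe_1 - \bfe_2 \|_2 \geq \| \hat{\bfx}(\bfe_1) - \hat{\bfx}(\bfe_2) \|_2$. Note that the argument in fact establishes the stronger \emph{firm} non-expansiveness (the inequality $\langle \bfe_1 - \bfe_2, \hat{\bfx}(\bfe_1) - \hat{\bfx}(\bfe_2) \rangle \geq \| \hat{\bfx}(\bfe_1) - \hat{\bfx}(\bfe_2) \|_2^2$), though only the weaker Lipschitz bound is needed downstream to invoke the concentration-of-measure results of \cite{Led:01}.
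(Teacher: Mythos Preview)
Your argument is correct and is the standard proof of non-expansiveness via monotonicity of the subdifferential followed by Cauchy--Schwarz; there is no gap. Note, however, that the paper does not supply its own proof of this lemma --- it simply cites Section~5 of \cite{Mor:65} and moves on --- so there is nothing to compare against beyond observing that what you wrote is exactly the classical derivation underlying that citation.
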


\begin{corollary} \label{thm:optlipschitz}
Fix an $\bfx^{\star} \in \mathbb{R}^p$. Define the function $h :\mathbb{R}^{p} \mapsto \mathbb{R}$ as $h(\bfe) = \| \hat{\bfx}(\bfe) - \bfx^{\star}\|_2$, where $\hat{\bfx}(\bfe)$ is defined in \eqref{eq:thm_conc_eq_opt}. Then the function $h$ is $1$-Lipschitz.
\end{corollary}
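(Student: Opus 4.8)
The plan is to derive the 1-Lipschitz property of $h$ directly from the non-expansiveness of the proximal operator established in Lemma \ref{thm:conc_eq}, combined with the reverse triangle inequality. The key observation is that $h(\bfe) = \|\hat{\bfx}(\bfe) - \bfx^{\star}\|_2$ differs from $\hat{\bfx}(\bfe)$ only by the fixed shift $\bfx^{\star}$, which cancels when comparing two values of the noise argument.

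Concretely, I would fix two arbitrary noise vectors $\bfe_1, \bfe_2 \in \mathbb{R}^p$ and estimate $|h(\bfe_1) - h(\bfe_2)|$. First I would apply the reverse triangle inequality for the Euclidean norm to the two vectors $\hat{\bfx}(\bfe_1) - \bfx^{\star}$ and $\hat{\bfx}(\bfe_2) - \bfx^{\star}$, which gives
\begin{equation*}
|h(\bfe_1) - h(\bfe_2)| = \bigl| \| \hat{\bfx}(\bfe_1) - \bfx^{\star} \|_2 - \| \hat{\bfx}(\bfe_2) - \bfx^{\star} \|_2 \bigr| \leq \| (\hat{\bfx}(\bfe_1) - \bfx^{\star}) - (\hat{\bfx}(\bfe_2) - \bfx^{\star}) \|_2.
\end{equation*}
The fixed vector $\bfx^{\star}$ then cancels in the right-hand side, leaving $\| \hat{\bfx}(\bfe_1) - \hat{\bfx}(\bfe_2) \|_2$. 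Finally I would invoke Lemma \ref{thm:conc_eq} directly to bound this quantity by $\| \bfe_1 - \bfe_2 \|_2$, concluding that $|h(\bfe_1) - h(\bfe_2)| \leq \| \bfe_1 - \bfe_2 \|_2$, i.e., $h$ is 1-Lipschitz.

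I do not anticipate any genuine obstacle here: the statement is an immediate corollary, and the only two ingredients are the reverse triangle inequality (elementary) and the non-expansiveness of the proximal map (already supplied by Lemma \ref{thm:conc_eq}, which is the substantive result). The one point worth noting for completeness is that the cancellation of $\bfx^{\star}$ is exactly what lets the shifted distance function inherit the Lipschitz constant of the proximal operator unchanged, so the Lipschitz constant remains $1$ rather than being inflated by the shift.
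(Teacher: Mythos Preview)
Your proposal is correct and matches the paper's own proof essentially line for line: the paper also applies the reverse triangle inequality (phrased as ``applying the triangle inequality twice'') to obtain $\bigl|\,\|\hat{\bfx}(\bfe_1)-\bfx^{\star}\|_2 - \|\hat{\bfx}(\bfe_2)-\bfx^{\star}\|_2\,\bigr| \leq \|\hat{\bfx}(\bfe_1)-\hat{\bfx}(\bfe_2)\|_2$, and then invokes Lemma~\ref{thm:conc_eq} to bound the right-hand side by $\|\bfe_1-\bfe_2\|_2$.
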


\begin{proof}
By applying the triangle inequality twice one has $\|\hat{\bfx}(\bfe_1) - \hat{\bfx}(\bfe_2) \|_2 \geq \big{|} \|\hat{\bfx}(\bfe_1) - \bfx^{\star} \|_2 - \| \bfx^{\star} - \hat{\bfx}(\bfe_2) \|_2\big{|}$. The result follows from an application of Lemma \ref{thm:conc_eq}.
\end{proof}

%\begin{proof}[Lemma \ref{thm:optlipschitz}]
% Let $\bfe_1, \bfe_2$ be arbitrary vectors in $\mathbb{R}^{p}$. Let $\hat{\bfx}_1$ be the optimal solution to \eqref{eq:thm_conc_eq_opt} when $\bfe = \bfe_1$ and $\hat{\bfx}_2$ be the optimal solution when $\bfe = \bfe_2$. Hence $\bfx^{\star} + \bfe_i - \hat{\bfx}_i \in \partial f(\hat{\bfx}_i), i \in \{ 1,2 \}$. By Lemma \ref{thm:monotonicity}, we have $\langle \bfe_2 - \bfe_1 , \hat{\bfx}_2 - \hat{\bfx}_1 \rangle \geq \| \hat{\bfx}_2 - \hat{\bfx}_1 \|^2_{2}$. By applying Cauchy-Schwarz, we obtain $\|\bfe_2 - \bfe_1\|_{2} \geq  \| \hat{\bfx}_2 - \hat{\bfx}_1 \|_{2}$. From the triangle inequality, we have $ \| \hat{\bfx}_2 - \hat{\bfx}_1 \|_{2} \geq |h_{\bfx^{\star}}(\bfe_2)  - h_{\bfx^{\star}}(\bfe_1)  |$. Thus $\|\bfe_2 -\bfe_1 \|_{2} \geq   |h_{\bfx^{\star}}(\bfe_2)  - h_{\bfx^{\star}}(\bfe_1)  |$ and hence $h_{\bfx^{\star}}$ is $1-$Lipschitz. \qed
%\end{proof}

%%% Proofs of Propositions 1 and 2 %%%%%%%%%%%%%%%%%%%%%%%%%%%%%%%%%%%%%

%%% Proofs of Main Results %%%%%%%%%%%%%%%%%%%%%%%%%%%%%%%%%%%%%%%%

\subsection{Proofs of results from Section \ref{sec:mainresults}} \label{sec:proofmainresult}

In this section we prove Proposition \ref{thm:probboundsE1E2E3} (our precursor to Theorem \ref{thm:changepoint}) and Proposition \ref{thm:squarederror}. To simplify notation, we denote $\eta_{\mathcal{C}}(\mathcal{X})$ by $\eta$  in this section.  First we establish a tertiary result that is useful for obtaining a sharper bound on the accuracy of the locations of the estimated change-points.
%[Changes in optimal solutions due to perturbations in noise are Lipschitz]
\begin{proposition}\label{thm:beforeafterlipschitz} Fix an $\bfx^{\star} \in \mathbb{R}^p$. Let $\hat{\bfx}_0$ and $\hat{\bfx}_1$ be the optimal solutions to $\hat{\bfx} = \argmin_{\bfx} \frac{1}{2} \| \bfx^{\star}+\bfe - \bfx \|^2_2 + f(\bfx) $ for $\bfe = \bfe_0$ and $\bfe=\bfe_1$, respectively. Define the function $j: \mathbb{R}^p \times \mathbb{R}^p \mapsto \mathbb{R}$,  $j(\bfe_0,\bfe_1) := \| \hat{\bfx}_0-\hat{\bfx}_1 \|_2$. Then $j$ is $\sqrt{2}$-Lipschitz.
\end{proposition}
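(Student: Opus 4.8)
The plan is to reduce the entire statement to the non-expansiveness of the proximal map recorded in Lemma \ref{thm:conc_eq}, combined with two applications of the triangle inequality. Write $P(\bfe) := \hat{\bfx}(\bfe)$ for the optimal solution of the proximal problem as a function of the noise, so that $\hat{\bfx}_0 = P(\bfe_0)$ and $\hat{\bfx}_1 = P(\bfe_1)$, and $j(\bfe_0,\bfe_1) = \| P(\bfe_0) - P(\bfe_1) \|_2$. I view $j$ as a function on $\mathbb{R}^p \times \mathbb{R}^p$ equipped with the Euclidean norm $\sqrt{\|\bfe_0 - \bfe_0'\|_2^2 + \|\bfe_1 - \bfe_1'\|_2^2}$ on the product, and the goal is to control $|j(\bfe_0,\bfe_1) - j(\bfe_0',\bfe_1')|$ by $\sqrt{2}$ times this product norm.

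First I would apply the reverse triangle inequality to the two Euclidean norms defining $j$, which gives
\[
\bigl| \| P(\bfe_0) - P(\bfe_1) \|_2 - \| P(\bfe_0') - P(\bfe_1') \|_2 \bigr| \leq \bigl\| \bigl( P(\bfe_0) - P(\bfe_0') \bigr) - \bigl( P(\bfe_1) - P(\bfe_1') \bigr) \bigr\|_2 .
\]
A further triangle inequality bounds the right-hand side by $\| P(\bfe_0) - P(\bfe_0') \|_2 + \| P(\bfe_1) - P(\bfe_1') \|_2$. At this point the non-expansiveness of the proximal operator (Lemma \ref{thm:conc_eq}, in the same vein as Corollary \ref{thm:optlipschitz}) applies termwise: each summand is dominated by the corresponding noise difference, yielding the bound $\| \bfe_0 - \bfe_0' \|_2 + \| \bfe_1 - \bfe_1' \|_2$.

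The final step is purely arithmetic: by Cauchy--Schwarz, or equivalently the elementary inequality $a + b \leq \sqrt{2}\,\sqrt{a^2 + b^2}$ applied with $a = \| \bfe_0 - \bfe_0' \|_2$ and $b = \| \bfe_1 - \bfe_1' \|_2$, the sum of the two noise differences is at most $\sqrt{2}$ times the product-space distance, which is exactly the asserted $\sqrt{2}$-Lipschitz bound. There is no genuine obstacle in this argument; the only point requiring a little care is the bookkeeping of \emph{which} factor produces the constant. The two uses of non-expansiveness each contribute a $1$-Lipschitz dependence, and it is solely their combination across the two independent noise coordinates—converting an $\ell_1$-type sum into an $\ell_2$-type product norm—that generates the $\sqrt{2}$ rather than a constant of $1$ or $2$.
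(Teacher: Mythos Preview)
Your proposal is correct and follows essentially the same approach as the paper's proof: reverse triangle inequality, then termwise non-expansiveness of the proximal operator (Lemma~\ref{thm:conc_eq}), then the elementary $\ell_1$-to-$\ell_2$ bound $a+b \leq \sqrt{2}\sqrt{a^2+b^2}$ to obtain the constant $\sqrt{2}$.
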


\begin{proof}
Let $\{\hat{\bfx}_0^1,\hat{\bfx}_1^1 \}$ and $\{\hat{\bfx}_0^2,\hat{\bfx}_1^2 \}$ be the optimal solutions corresponding to the two instantiations $( \bfe_{0}^{1},\bfe_{1}^1)$ and $( \bfe_{0}^{2},\bfe_{1}^2)$ of the vectors $( \bfe_{0},\bfe_{1})$. From Lemma \ref{thm:conc_eq}, we have $\| \hat{\bfx}_{0}^{1} - \hat{\bfx}_0^2 \|_2 \leq \| \bfe_0^1  - \bfe_0^2 \|_2$ and  $\| \hat{\bfx}_1^1 - \hat{\bfx}_1^2 \|_2 \leq \| \bfe_1^1  - \bfe_1^2 \|_2$.  By applying the triangle inequality, we have $\| \hat{\bfx}_0^1 - \hat{\bfx}_1^1 \|_2 \leq \| \hat{\bfx}_0^1 - \hat{\bfx}_0^2 \|_2 + \| \hat{\bfx}_0^2 - \hat{\bfx}_1^2 \|_{2} + \| \hat{\bfx}_1^2 - \hat{\bfx}_1^1 \|_2$. Then
\begin{align*}
 | \| \hat{\bfx}_0^1 - \hat{\bfx}_1^1 \|_2-  \| \hat{\bfx}_0^2 - \hat{\bfx}_1^2 \|_{2} | &\leq \| \hat{\bfx}_0^1 - \hat{\bfx}_0^2 \|_2 + \| \hat{\bfx}_1^2 - \hat{\bfx}_1^1 \|_2 \\
&\leq \| \bfe_0^1  - \bfe_0^2 \|_2 + \| \bfe_1^1  - \bfe_1^2 \|_2 \\
&\leq \sqrt{2} \| (\bfe_0^1 ,\bfe_1^1)- (\bfe_0^2,\bfe_1^2) \|_{2}.
\end{align*}
Hence, $j$ is $\sqrt{2}$-Lipschitz.
\end{proof}

\emph{Proof of Proposition~\ref{thm:probboundsE1E2E3}} We divide the proof into three parts corresponding to the three events of interest.

\emph{Part one} [$\prob (E_{1}^c ) \leq 2n^{1-r^2}$]: For each change-point $t \in \tau^{\star}$, define the following event $ E_{1,t} : \{ S_t \geq \gamma\}$. Clearly, $E_1^{c} = \bigcup_{t \in \tau^{\star}}  E_{1,t}^{c}$. We will prove that $\prob (E_{1,t}^c ) \leq 2n^{-r^2}$. By taking a union bound over all $t \in \tau^{\star}$, we have
% single column edition
\begin{equation*}
\prob(E_1^c) = \prob(\bigcup_{t \in \tau^{\star}}  E_{1,t}^{c}) \leq  \sum_{t \in \tau^{\star}} \prob( E_{1,t}^{c}) \leq  2 |\tau^{\star}|n^{-r^2} \leq 2 n^{1-r^2}.
\end{equation*}
% Double column edition
%\begin{align*}
%\prob(E_1^c) =& \prob(\bigcup_{t \in \tau^{\star}}  E_{1,t^{c}}) \leq  \sum_{t \in \tau^{\star}} \prob( E_{1,t^{c}}) \\
%\leq&  2 |\tau^{\star}|n^{-r^2} \leq 2 n^{1-r^2}
%\end{align*}
We now prove that  $\prob (E_{1,t}^c ) \leq 2n^{-r^2}$. Conditioning on the event $E^c_{1,t}$, and by the triangle inequality, we have
% Single column version
\begin{align*}
\gamma >& \| \hat{\bfx}[t - \theta +1] - \hat{\bfx}[t +1]\|_2 \\
\geq& - \| \bfx^{\star}[ t - \theta +1] - \hat{\bfx}[t - \theta +1] \|_2 + \| \bfx^{\star}[t  +1]  -\bfx^{\star}[t - \theta +1] \|_2 - \| \hat{\bfx}[t +1]-\bfx^{\star}[t  +1] \|_2.
\end{align*}
% Double column edition
%\begin{align*}
%\gamma >& \| \hat{\bfx}(t - \theta +1) - \hat{\bfx}(t +1)\|_2 \\
%\geq& - \| \bfx^{\star}( t - \theta +1) - \hat{\bfx}(t - \theta +1) \|_2 + \| \bfx^{\star}( t  +1)\\
%&  -\bfx^{\star}( t - \theta +1) \|_2 - \| \hat{\bfx}(t +1)-\bfx^{\star}(t  +1) \|_2.
%\end{align*}
Since $ \| \bfx^{\star}[t  +1]-\bfx^{\star}[t - \theta +1] \|_2 \geq \Delta_{\min}\geq 2\gamma$, one of the two events $\{\| \hat{\bfx}[t-\theta+1] - \bfx^{\star}[t] \|_2 \geq \gamma/2\} $ or $\{\| \hat{\bfx}[t+1] - \bfx^{\star}[t+1] \|_2 \geq \gamma/2\}$ must occur. Also, since $t \in \tau^{\star}$, we have $|t-t'| \geq \theta$ for all $t' \in \tau^{\star} \backslash \{ t\}$. Hence the signal is constant over the time instances $\{t - \theta +1,\dots, t \}$ and $\{t +1,\dots,t+\theta\} $. By applying Proposition~\ref{thm:error_robust}, we have the inequalities $ \expc [\| \bfx^{\star}[t - \theta +1] - \hat{\bfx}[ t - \theta +1] \|_2]\leq \frac{\sigma}{\sqrt{\theta}}\eta$ and $\expc[ \| \hat{\bfx}[t +1]-\bfx^{\star}[t  +1] \|_2] \leq \frac{\sigma}{\sqrt{\theta}}\eta$. Thus
% Single column edition
\begin{eqnarray*}
\prob (E_{1,t}^c )
&\leq& \prob(\| \hat{\bfx}[t-\theta+1] - \bfx^{\star}[t] \|_2 \geq \gamma/2)  + \prob(\| \hat{\bfx}[t+1] - \bfx^{\star}[t+1] \|_2 \geq \gamma/2) \\
&\leq& \prob(\| \hat{\bfx}[t-\theta+1] - \bfx^{\star}[t] \|_2 \geq \expc[\| \hat{\bfx}[t-\theta+1] - \bfx^{\star}[t] \|_2] + r\sqrt{\sigma^2 / \theta}\sqrt{2\log n}) \\
&&+ \prob( \| \hat{\bfx}[t+1] - \bfx^{\star}[t+1] \|_2 \geq \expc[\| \hat{\bfx}[t+1] - \bfx^{\star}[t+1] \|_2]+ r\sqrt{\sigma^2 / \theta}\sqrt{2\log n} ) \\
&\leq& 2\exp(-(r \sqrt{2\log n})^2 /2 ) = 2n^{-r^2}
\end{eqnarray*}
% Double column edition
%\begin{eqnarray*}
%&& \prob (E_{1,t}^c ) \\
%&\leq& \prob(\| \hat{\bfx}(t-\theta+1) - \bfx^{\star}(t) \|_2 \geq \gamma/2) \\
%&& \quad + \prob(\| \hat{\bfx}(t+1) - \bfx^{\star}(t+1) \|_2 \geq \gamma/2) \\
%&\leq& \prob(\| \hat{\bfx}(t-\theta+1) - \bfx^{\star}(t) \|_2 \\
%&& \quad \geq \expc[\| \hat{\bfx}(t-\theta+1) - \bfx^{\star}(t) \|_2] + r\sqrt{\sigma^2 / \theta}\sqrt{2\log n}) \\
%&+& \prob( \| \hat{\bfx}(t+1) - \bfx^{\star}(t+1) \|_2 \\
%&& \quad \geq \expc[\| \hat{\bfx}(t+1) - \bfx^{\star}(t+1) \|_2]+ \sqrt{\sigma^2 / \theta}r\sqrt{2\log n} ) \\
%&\leq& 2\exp(-(r \sqrt{2\log n})^2 /2 ) = 2n^{-r^2}
%\end{eqnarray*}
where the last inequality follows from Corollary \ref{thm:optlipschitz} and from \cite[Theorem 5.3]{Led:01}.

\emph{Part two} [$\prob(E_2^{c})\leq 2n^{1-r^2}$]:
We prove that $\prob(E_2^{c})\leq 2n^{1-r^2}$ in essentially the same manner in which we showed that $\prob(E_1^{c} )\leq 2n^{1-r^2}$. For all $t \in \tau_{\text{far}}$, define $E_{2,t}$ as the event $ E_{2,t}:=\{ \| \hat{\bfx}[t-\theta+1]- \hat{\bfx}[t+1] \|_2 \leq  \gamma \}$. Then $E_2^c = \bigcup_{t \in \tau_{\text{far}}} E_{2,t}^c $. We will start by proving that $\prob(E_{2,t}^c)\leq 2n^{-r^2} $.

By applying the triangle inequality and conditioning on the event $E_{2,t}^c $ holding for some $t \in \tau_{\text{far}}$, we have $\| \hat{\bfx}[t-\theta+1] - \bfx^{\star}[t+1] \|_2+ \| \bfx^{\star}[t+1]-\hat{\bfx}[t+1] \|_2  > \| \hat{\bfx}[t-\theta+1]- \hat{\bfx}[t+1] \|_2 > \gamma$.
%\begin{equation*}
%\| \hat{\bfx}(t-\theta+1) - \bfx^{\star}(t+1) \|_2+ \| \bfx^{\star}(t+1)-\hat{\bfx}(t+1) \|_2  > \| \hat{\bfx}(t-\theta+1)- \hat{\bfx}(t+1) \|_2 > \gamma
%\end{equation*}
Consequently, one of the two events $\{\| \hat{\bfx}[t-\theta+1] - \bfx^{\star}[t+1] \|_2 \geq \gamma/2 \}$ or $\{\| \bfx^{\star}[t+1]-\hat{\bfx}[t+1] \|_2 \geq \gamma/2\} $ must hold. Since $t \in \tau_{\text{far}}$, we have $|t-t^{\star} |>\theta$ for all $t^{\star} \in \tau^{\star}$, and thus the signal is constant over the time instances $\{t-\theta+1 , \dots,t+\theta\}$. By Proposition \ref{thm:error_robust}, we have $\expc[\| \hat{\bfx}[t-\theta+1] - \bfx^{\star}[t-\theta+1] \|_2] \leq \frac{\sigma}{\sqrt{\theta}}\eta$ and $\expc[\| \hat{\bfx}[t+1] - \bfx^{\star}[t+1] \|_2] \leq \frac{\sigma}{\sqrt{\theta}}\eta$. This implies that we have that at least one of the following two events $\{\| \hat{\bfx}[t-\theta+1] - \bfx^{\star}[t-\theta+1] \|_2  \geq \expc[\| \hat{\bfx}[t-\theta+1] - \bfx^{\star}[t-\theta+1] \|_2 ]+  r\sqrt{\sigma^2 / \theta}\sqrt{2\log n}\}$ or $\{\| \hat{\bfx}[t+1] - \bfx^{\star}[t+1] \|_2  \geq \expc[\| \hat{\bfx}[t+1] - \bfx^{\star}[t+1] \|_2 ]+   r\sqrt{\sigma^2 / \theta}\sqrt{2\log n}\}$ holds.
%\begin{eqnarray*}
%\| \hat{\bfx}(t-\theta+1) - \bfx^{\star}(t-\theta+1) \|_2  &\geq& \expc[\| \hat{\bfx}(t-\theta+1) - \bfx^{\star}(t-\theta+1) \|_2 ]+  r\sqrt{\sigma^2 / \theta}\sqrt{2\log n},\\
%\| \hat{\bfx}(t+1) - \bfx^{\star}(t+1) \|_2  &\geq& \expc[\| \hat{\bfx}(t+1) - \bfx^{\star}(t+1) \|_2 ]+   r\sqrt{\sigma^2 / \theta}\sqrt{2\log n}.
%\end{eqnarray*}

From Corollary \ref{thm:optlipschitz} and from \cite[Theorem 5.3]{Led:01}, we have that the probability of either event (corresponding to these two inequalities) occuring is less than $2\exp(-(r \sqrt{2\log n})^2 /2 ) = 2n^{-r^2}$. Thus
\begin{equation*}
\prob(E_2^c) = \prob(\bigcup_{t \in \tau_{\text{far}}}  E_{2,t}^{c}) \leq  \sum_{t \in \tau_{\text{far}}} \prob( E_{2,t}^{c}) \leq 2 |\tau_{\text{far}}|n^{-r^2} \leq  2n^{1-r^2},
\end{equation*}
as required.

%%% E3

\emph{Part three} [$\prob(E^{c}_{3}) \leq n^{1-r^2} $]: Let us now consider the event $E_3$. To simplify notation, we define $l:=4r\sqrt{\log n} / \eta$. To prove this part of the proposition, we show a slightly stronger result $\prob(E^{c}_{3}) \leq 4 \theta |\tau^{\star}|  \exp (- l^2 \eta^2/ 16 )$. Since $\theta |\tau^{\star}| \leq n/4$, our bound would imply that $\prob(E^{c}_{3}) \leq n^{1-r^2}$.

For all pairs $(t,\delta) \in \tau_{\text{buffer}}$, define the event $  E_{3,t,\delta } = \bigl\{ \| \hat{\bfx}[t +1] - \hat{\bfx}[t -\theta +1] \|_2 > \| \hat{\bfx}[t +1 +\delta] - \hat{\bfx}[t -\theta +1 +\delta] \|_2  \bigr\}$.
%\begin{align*}
%  E_{3,t,\delta } =& \bigl\{ \| \hat{\bfx}(t +1) - \hat{\bfx}(t -\theta +1) \|_2 \\
% &> \| \hat{\bfx}(t +1 +\delta) - \hat{\bfx}(t -\theta +1 +\delta) \|_2  \bigr\}.
%\end{align*}
Then $E_3^c = \bigcup_{(t,\delta)\in \tau_{\text{buffer}}}  E_{3,t,\delta }^c$. We start by proving the following bound
\begin{equation*} %\label{eq:sharpness_conc}
\prob ( E^{c}_{3,t,\delta } )\leq 2\exp (- l^2 \eta^2/ 16 )
\end{equation*}
for all pairs $(t,\delta)$ in $\tau_{\text{buffer}}$. Fix one such pair and let $\Delta_t$ denote the magnitude of the change at $t \in \tau^{\star}$. From the triangle inequality and Proposition \ref{thm:error_robust} we have that
% Single column edition
\begin{align*} 
\expc[\| \hat{\bfx}[t+1] -& \hat{\bfx}[t -\theta +1] \|_2 ]  \\
\geq& - \expc[ \| \hat{\bfx}[t +1] - \bfx^{\star}[t+1] \|_2 ]  \\ &+ \expc[\|\bfx^{\star}[t+1]  - \bfx^{\star}[t- \theta +1] \|_2] - \expc[\| \bfx^{\star}[t - \theta +1] - \hat{\bfx}[t- \theta +1] \|_2]  \\
\geq & ~ \Delta_t - 2 \sqrt{\sigma^2/\theta} \eta.
\end{align*}
% Double column edition
%\begin{align} \label{eq:sharpness_LB}
%&\expc[\| \hat{\bfx}(t+1) - \hat{\bfx}(t -\theta +1) \|_2 ]  \nonumber\\
%\geq& - \expc[ \| \hat{\bfx}(t +1) - \bfx^{\star}(t+1) \|_2 ] + \expc[\|\bfx^{\star}(t+1)  \nonumber \\
%& - \bfx^{\star}(t- \theta +1) \|_2] - \expc[\| \bfx^{\star}(t - \theta +1) - \hat{\bfx}^{\star}(t- \theta +1) \|_2] \nonumber \\
%=&\Delta_t - 2 \sqrt{\sigma^2/\theta} \eta.
%\end{align}
Suppose that $\delta \geq 0$. By similarly applying the triangle inequality and Proposition \ref{thm:error_robust} we have
% Single column version
\begin{align*} 
\expc[\| \hat{\bfx}[t +1 +\delta] -& \hat{\bfx}[t -\theta +1 +\delta] \|_2] \\
\leq& ~ \expc[\| \hat{\bfx}[t +1 +\delta] - \bfx^{\star}[t +1] \|_2] + \expc[\| \bfx^{\star}[t +1] - \hat{\bfx}[t -\theta +1 +\delta] \|_2] \\
\leq& ~ (1-\delta / \theta) \Delta_t + 2 \sqrt{\sigma^2/\theta} \eta.
\end{align*} A similar set of computations will show that $ \expc[\| \hat{\bfx}[t +1 +\delta] - \hat{\bfx}[t -\theta +1 +\delta] \|_2] \leq (1+\delta / \theta) \Delta_t + 2 \sqrt{\sigma^2/\theta} \eta$ for $\delta<0$.
% The double column Edition
%\begin{eqnarray} \label{eq:sharpness_UB}
%&&\expc[\| \hat{\bfx}(t +1 +\delta) - \hat{\bfx}(t -\theta +1 +\delta) \|_2] \nonumber\\
%&\leq& \expc[\| \hat{\bfx}(t +1 +\delta) - \bfx^{\star}(t +1) \|_2] \nonumber \\
%&& \quad + \expc[\| \bfx^{\star}(t +1) - \hat{\bfx}(t -\theta +1 +\delta) \|_2] \nonumber\\
%&\leq& (1-|\delta| / \theta) \Delta_t + 2 \sqrt{\sigma^2/\theta} \eta.
%\end{eqnarray}
Combining these inequalities and using the range of values of $\delta$ we have
\begin{align} \label{eq:sharpness_bound}
\expc[ \| \hat{\bfx}[t+1] - \hat{\bfx}[t -\theta +1] \|_2] - \expc[ \| \hat{\bfx}[t +1 +\delta] - \hat{\bfx}[t -\theta +1 +\delta] \|_2] \geq& \frac{|\delta|}{\theta} \Delta_t - 4\frac{\sigma}{\sqrt{\theta}} \eta \nonumber \\
 \geq& l \frac{\sigma}{\sqrt{\theta}} \eta.
\end{align}
Then
% The double column Edition
%\begin{align*}
%& \quad  \prob(E^{c}_{3,t,\delta }) \\
% =&\prob(\{ \| \hat{\bfx}(t +1) - \hat{\bfx}(t -\theta +1) \|_2 \\
%& \quad \quad > \| \hat{\bfx}(t+1 +\delta) - \hat{\bfx}(t -\theta +1 +\delta) \|_2  \bigr \}) \\
%&\overset{\text{(i)}}{\leq} \prob(|\| \hat{\bfx}(t+1) - \hat{\bfx}(t -\theta +1) \|_2- \expc[\| \hat{\bfx}(t+1) \\
%&\quad \quad - \hat{\bfx}(t-\theta +1) \|_2]|\geq \frac{l\sigma}{2\sqrt{\theta}} \eta ) \\
%+& \prob(|\| \hat{\bfx}(t+1 +\delta) - \hat{\bfx}(t -\theta +1 +\delta) \|_2  - \expc[\| \hat{\bfx}(t+1 +\delta) \\
%&\quad \quad - \hat{\bfx}(t -\theta +1 +\delta) \|_2]| \geq  \frac{l\sigma}{2\sqrt{\theta}} \eta ) \\
%\overset{\text{(ii)}}{\leq}& 4\exp (- l^2 \eta^2/8 )
%\end{align*}
% The single column edition
\begin{align*}
\prob(E^{c}_{3,t,\delta }) =& \prob(  \| \hat{\bfx}[t+1 +\delta] - \hat{\bfx}[t -\theta +1 +\delta] \|_2  > \| \hat{\bfx}[t +1] - \hat{\bfx}[t -\theta +1] \|_2 \bigr ) \\
\overset{\text{(i)}}{\leq}& ~ \prob\Big{(} \| \hat{\bfx}[t+1 +\delta] - \hat{\bfx}[t -\theta +1 +\delta] \|_2  - \| \hat{\bfx}[t +1] - \hat{\bfx}[t -\theta +1] \|_2 \nonumber\\
& +\expc[ \| \hat{\bfx}[t+1] - \hat{\bfx}[t -\theta +1] \|_2] - \expc[ \| \hat{\bfx}[t +1 +\delta] - \hat{\bfx}[t -\theta +1 +\delta] \|_2]\geq  \frac{l\sigma}{\sqrt{\theta}} \eta \Bigg{)} \\
\overset{\text{(ii)}}{\leq}& ~ \prob\left(\expc[\| \hat{\bfx}[t+1] - \hat{\bfx}[t-\theta +1] \|_2] - \| \hat{\bfx}[t+1] - \hat{\bfx}[t -\theta +1] \|_2\geq \frac{l\sigma}{2\sqrt{\theta}} \eta \right) \\
& + \prob\Bigg{(}\| \hat{\bfx}[t+1 +\delta] - \hat{\bfx}[t -\theta +1 +\delta] \|_2 - \expc[\| \hat{\bfx}[t+1 +\delta] - \hat{\bfx}[t -\theta +1 +\delta] \|_2] \nonumber \\& \hspace{0.5in} \geq  \frac{l\sigma}{2\sqrt{\theta}} \eta \Bigg{)} \\
\overset{\text{(iii)}}{\leq}& 2\exp (- l^2 \eta^2/16 ),
\end{align*}
where (i) follows from \eqref{eq:sharpness_bound}, (ii) follows from the triangle inequality, and (iii) follows from Proposition \ref{thm:beforeafterlipschitz} and from \cite[Theorem 5.3]{Led:01}. Since $E_3^c = \bigcup_{(t,\delta)\in \tau_{\text{buffer}}}  E_{3,t,\delta }^c$, we have via a union bound
\begin{align*}
& \prob(E_3^c) \leq \sum_{(t,\delta)\in \tau_{\text{buffer}}} \prob ( E^{c}_{3,t,\delta } ) \leq 2|\tau_{\text{buffer}}| \exp (- l^2 \eta^2/16 )
\leq 4 \theta |\tau^{\star}|  \exp (- l^2 \eta^2/16 ).
\end{align*}
This concludes the proof of Proposition~\ref{thm:probboundsE1E2E3}. $\qed$

%%% End of worksite

Before proving Proposition \ref{thm:squarederror} we require a short lemma.

\begin{lemma} \label{thm:errorboundwhp}
Let $\bfe \sim \mathcal{N} (0, \sigma^2 I_{p\times p})$. Then
\begin{equation*}
\mathrm{dist}^{2} ( \bfe, \lambda \cdot \partial \| \bfx \|_{\mathcal{C}}) \leq 2 \bigl( \expc \bigl[ \mathrm{dist} (\bfe,\lambda \cdot \partial\| \bfx \|_{\mathcal{C}}) \bigr] \bigr)^2 + 2\sigma^2 t^2
\end{equation*}
with probability greater than $1-2\exp(-t^2 / 2)$.
\end{lemma}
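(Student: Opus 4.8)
The plan is to exploit the fact that the map $\bfe \mapsto \mathrm{dist}(\bfe, \lambda \cdot \partial \| \bfx \|_{\mathcal{C}})$ is $1$-Lipschitz and then invoke Gaussian concentration of measure, exactly as the paper does elsewhere via \cite[Theorem 5.3]{Led:01}. The heavy lifting has essentially already been set up; all that remains is to combine the Lipschitz property of the distance function with a standard tail bound and an elementary squaring step.

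First I would record the Lipschitz property. For any nonempty closed set $S \subseteq \R^p$, the distance function $\bfe \mapsto \mathrm{dist}(\bfe, S) = \inf_{\mathbf{w}\in S} \| \mathbf{w} - \bfe \|_2$ satisfies $|\mathrm{dist}(\bfe_1,S) - \mathrm{dist}(\bfe_2,S)| \leq \| \bfe_1 - \bfe_2 \|_2$ by the triangle inequality. Taking $S = \lambda \cdot \partial \| \bfx \|_{\mathcal{C}}$, which is closed and convex since $\partial \| \bfx \|_{\mathcal{C}}$ is a face of the dual norm ball, the function $f(\bfe) := \mathrm{dist}(\bfe, \lambda \cdot \partial \| \bfx \|_{\mathcal{C}})$ is therefore $1$-Lipschitz.

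Second, I would apply Gaussian concentration. Writing $\bfe = \sigma \mathbf{g}$ with $\mathbf{g} \sim \mathcal{N}(0, I_{p \times p})$, the composition $\mathbf{g} \mapsto f(\sigma \mathbf{g})$ is $\sigma$-Lipschitz, so that \cite[Theorem 5.3]{Led:01} yields
\[
\prob\bigl(f(\bfe) \geq \expc[f(\bfe)] + \sigma t\bigr) \leq \exp(-t^2/2).
\]
Finally, on the complementary event, which holds with probability at least $1 - \exp(-t^2/2) \geq 1 - 2\exp(-t^2/2)$, I would square the inequality $f(\bfe) \leq \expc[f(\bfe)] + \sigma t$ and apply the elementary bound $(a+b)^2 \leq 2a^2 + 2b^2$ to conclude
\[
f(\bfe)^2 \leq 2\bigl(\expc[f(\bfe)]\bigr)^2 + 2\sigma^2 t^2,
\]
which is precisely the claim. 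There is no genuine obstacle here; the only points requiring care are the scaling of the Lipschitz constant by $\sigma$ when passing from $\mathbf{g}$ to $\bfe$, and the observation that the one-sided tail bound already delivers the stated probability $1 - 2\exp(-t^2/2)$ with room to spare.
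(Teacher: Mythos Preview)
Your proposal is correct and follows essentially the same approach as the paper: both establish that the distance-to-subdifferential map is $1$-Lipschitz, invoke Gaussian concentration \cite[Theorem~5.3]{Led:01} to obtain the one-sided bound $\mathrm{dist}(\bfe,\lambda\cdot\partial\|\bfx\|_{\mathcal{C}}) \le \expc[\mathrm{dist}(\bfe,\lambda\cdot\partial\|\bfx\|_{\mathcal{C}})] + \sigma t$ with probability at least $1-\exp(-t^2/2)$, and then square via $(a+b)^2 \le 2a^2 + 2b^2$. Your remark that the one-sided tail already yields the stated probability $1 - 2\exp(-t^2/2)$ with slack is exactly right.
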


\begin{proof}%[Lemma \ref{thm:errorboundwhp}]
The mapping $\bfe \mapsto \mathrm{dist}(\bfe,\lambda \cdot \partial \| \bfx\|_{\mathcal{C}})$ is nonexpansive and hence $1$-Lipschitz. Using Theorem 5.3 from \cite{Led:01}, we have
\begin{equation} \label{eq:errorbounds_partabound}
\mathrm{dist}(\bfe,\lambda \cdot \partial \| \bfx \|_{\mathcal{C}}) \leq \expc \bigl[ \mathrm{dist}(\bfe,\lambda \cdot \partial \| \bfx \|_{\mathcal{C}}) \bigr] + t\sigma
\end{equation}
with probability greater than $1-\exp(-t^2 /2)$. By conditioning on the event corresponding to the inequality \eqref{eq:errorbounds_partabound}, we apply the arithmetic-geometric-mean inequality and conclude that
\begin{equation*}
\mathrm{dist}^2(\bfe,\lambda \cdot \partial \| \bfx \|_{\mathcal{C}}) \leq 2( \expc \bigl[ \mathrm{dist}(\bfe,\lambda \cdot \partial \| \bfx \|_{\mathcal{C}}) \bigr] )^2 + 2t^2\sigma^2
\end{equation*}
with probability greater than $1-\exp(-t^2 /2)$.
\end{proof}

\emph{Proof of Proposition \ref{thm:squarederror}}
It follows from the proof of Proposition \ref{thm:probboundsE1E2E3} that the event $E_1 \cap E_2$ holds with probability greater than $1-4n^{1-r^2}$. Conditioning on the event that $E_1 \cap E_2$ holds, the reconstructed signal is constant over the interval $\{t_1+\theta,\dots,t_2-\theta\}$. The result then follows from an application of Lemma \ref{thm:errorboundwhp} and a union bound. \qed

\end{document}